\newcommand{\R}{\mathbb{R}}
\newcommand{\1}{\mathbf{1}}
\newcommand{\lm}{\left\|}
\renewcommand{\rm}{\right\|}
\newcommand{\del}{\partial}
\theoremstyle{definition}
\newtheorem{definition}{Definition}[section]
\newtheorem{remark}{Remark}[section]
\theoremstyle{theorem}
\newtheorem{theorem}{Theorem}[section]
\newtheorem{proposition}{Proposition}[section]
\newtheorem{lemma}{Lemma}[section]
\providecommand{\keywords}[1]{\textbf{\textit{Keywords: \,}} #1}
\providecommand{\msc}[1]{\textbf{\textit{2000 Mathematics Subject Classification: \,}} #1}
\begin{document}

\title{Plasticity as the $\mathbf{\Gamma}$-Limit of a Two-Dimensional Dislocation Energy: \\the Critical Regime without the Assumption of Well-Separateness%\thanks{Grants or other notes
%about the article that should go on the front page should be
%placed here. General acknowledgments should be placed at the end of the article.}
}

%\titlerunning{Plasticity as the $\Gamma$-Limit of a Two-Dimensional Dislocation Energy}        % if too long for running head

\author{Janusz Ginster
}

%\authorrunning{Short form of author list} % if too long for running head

%\institute{J.~Ginster \at
%              Center for Nonlinear Analysis\\
%              Carnegie Mellon University\\
%              5000 Forbes Ave\\
%              Pittsburgh, PA 15213 \\
%              Tel.: +1 412 268 5617\\
%              \email{jginster@andrew.cmu.edu}           %  \\
%%             \emph{Present address:} of F. Author  %  if needed
%           }
%
\date{}
% The correct dates will be entered by the editor

\maketitle

\begin{abstract}
In this paper, a strain-gradient plasticity model is derived from a mesoscopic model for straight parallel edge dislocations in an infinite cylindrical crystal.
The main difference to existing work is that in this work the well-separateness of disloactions is \textit{not} assumed.
In order to prove meaningful lower bounds the ball construction technique, which was developed in the context of Ginzburg-Landau by Jerrard and Sandier, is adapted and modified.
To overcome the difficulty of loss of rigidity on thin annuli during the ball construction a combination of combinatorial arguments and local modifications of the occurring elastic strains is presented.
\end{abstract}
\keywords{$\Gamma$-convergence, plasticity, dislocations, ball construction} \\
\msc{49J45, 58K45, 74C05}

\section{Introduction}

Introduced  in 1907 by Volterra, \cite{Vo07}, dislocations were theoretically found to play a main role to manifest plastic slip in metals by Orowan \cite{Or34}, Polanyi \cite{Po34}, and Taylor \cite{Ta34} in 1934.
%However, findings of dislocations with an electrone microscope were not until 1956, \cite{HiHoWh56}.  \\
Therefore, the derivation of macroscopic plasticity from dislocation models is of tremendeous interest in the mathematical and the mechanical engineering community, \cite{CoGaMu11,GaLePo10,Po07,CoGaOr15,MuScZe14,ScZe12,dLGaPo12,Gr97,Gi17,CeLe05}.
Many of these derivations start from a semi-discrete model where the underlying crystalline lattice is averaged and the dislocations are modelled individually.

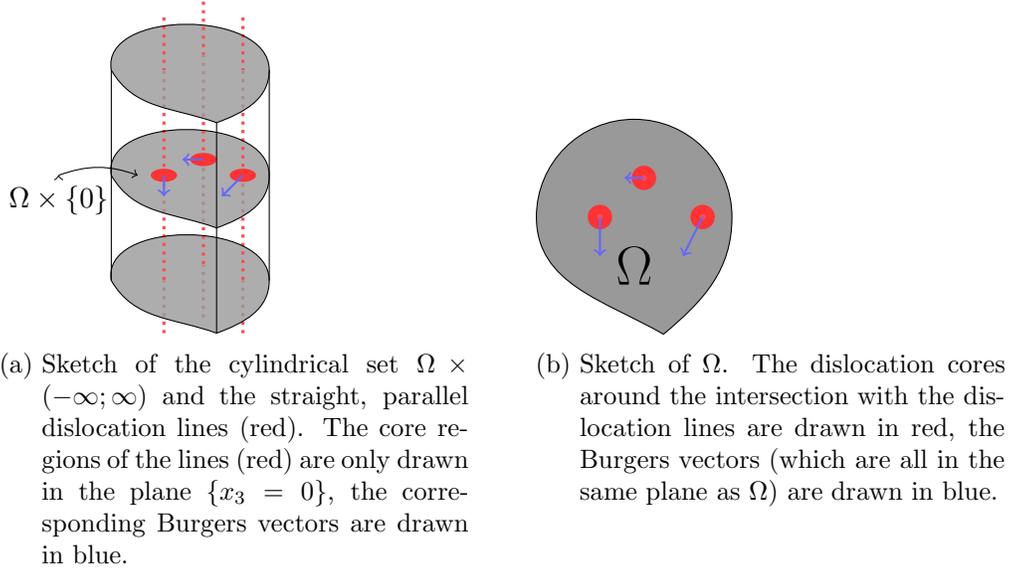
\begin{figure}\centering
\begin{subfigure}[t]{0.4\textwidth}
\begin{tikzpicture}[scale = 0.7]

\draw[dotted, very thick,red!70!white] (1,-3) -- (1,-2);
\draw[dotted, very thick,red!70!white] (2.5,-3) -- (2.5,-2);
\draw[dotted, very thick,red!70!white] (1.75,-2.7) -- (1.75,-1.7);

\fill[gray!80!white, opacity =0.8](0, -2) to [out=100, in=90]   (3,-2) to[out=-90, in =20] (2,-3) to[out=160, in=300]  (0,-2);

\draw[dotted, very thick,red!70!white] (1,0) -- (1,-2);
\draw[dotted, very thick,red!70!white] (2.5,0) -- (2.5,-2);
\draw[dotted, very thick,red!70!white] (1.75,0.3) -- (1.75,-1.7);

\fill[gray!80!white, opacity =0.8](0, 0) to [out=100, in=90]   (3,0) to[out=-90, in =20] (2,-1) to[out=160, in=300]  (0,0);

\draw[dotted, very thick,red!70!white] (1,0) -- (1,2);
\draw[dotted, very thick,red!70!white] (2.5,0) -- (2.5,2);
\draw[dotted, very thick,red!70!white] (1.75,2.3) -- (1.75,0.3);

\fill[gray!80!white, opacity =0.8] (0, 2) to [out=100, in=90]   (3,2) to[out=-90, in =20] (2,1) to[out=160, in=300]  (0,2);

\fill[red!80!white]  plot[smooth,domain=0:2*pi] ({1+cos(\x r)/4},{sin(\x r)/8});
\draw[dotted, very thick,red!70!white] (1,3) -- (1,2);

\fill[red!80!white] plot[smooth,domain=0:2*pi] ({2.5+cos(\x r)/4},{sin(\x r)/8});
\draw[dotted, very thick,red!70!white] (2.5,3) -- (2.5,2);

\fill[red!80!white] plot[smooth,domain=0:2*pi] ({1.75+cos(\x r)/4},{sin(\x r)/8+0.3});
\draw[dotted, very thick,red!70!white] (1.75,3.3) -- (1.75,2.3);

\draw   (0, 0) to [out=100, in=90]   (3,0) to[out=-90, in =20] (2,-1) to[out=160, in=300]  (0,0);
\draw   (0, -2) to [out=100, in=90]   (3,-2) to[out=-90, in =20] (2,-3) to[out=160, in=300]  (0,-2);
\draw   (0, 2) to [out=100, in=90]   (3,2) to[out=-90, in =20] (2,1) to[out=160, in=300]  (0,2);
\draw (0,-2) -- (0,2);
\draw (3,-2) -- (3,2);
\draw (2,-3) -- (2,1);

\draw[->] (-1,0) to[out=20,in=160] (0.5,0);
\draw[->] (-1,0) node[anchor=north] {$\Omega \times \{0\}$};

\draw[thick,blue!60!white,->] (1,0) -- (1,-0.4);
\draw[thick,blue!60!white,->] (1.75,0.3) -- (1.35,0.3);
\draw[thick,blue!60!white,->] (2.5,0) -- (2.1,-0.4);
\end{tikzpicture}
\caption{Sketch of the cylindrical set $\Omega \times (-\infty;\infty)$ and the straight, parallel dislocation lines (red). The core regions  of the lines (red) are only drawn in the plane $\{x_3 = 0\}$, the corresponding Burgers vectors are drawn in blue.}
\end{subfigure}
\quad \quad
\begin{subfigure}[t]{0.4\textwidth}
\begin{tikzpicture}[scale=1.3]
\fill[gray!80!white] (0,0) to[out = 90,in = 180] (1,1) to[out=0, in=90] (2,0) to[out = 270, in=40] (1.3,-1.2) to[out=150,in=270] (0,0);
\draw (0,0) to[out = 90,in = 180] (1,1) to[out=0, in=90] (2,0) to[out = 270, in=40] (1.3,-1.2) to[out=150,in=270] (0,0);
\fill[red!80!white] plot[smooth,domain=0:2*pi] ({0.65+cos(\x r)/8},{sin(\x r)/8});
\fill[red!80!white] plot[smooth,domain=0:2*pi] ({1.1+cos(\x r)/8},{sin(\x r)/8+0.4});
\fill[red!80!white] plot[smooth,domain=0:2*pi] ({1.7+cos(\x r)/8},{sin(\x r)/8});
\fill[red!70!white] plot[smooth,domain=0:2*pi] ({0.65+cos(\x r)/35},{sin(\x r)/35});
\fill[red!70!white] plot[smooth,domain=0:2*pi] ({1.1+cos(\x r)/35},{sin(\x r)/35+0.4});
\fill[red!70!white] plot[smooth,domain=0:2*pi] ({1.7+cos(\x r)/35},{sin(\x r)/35});
\draw (1,-0.5) node[font=\huge] {$\Omega$};
\draw[thick,blue!60!white,->] (0.65,0) -- (0.65,-0.4);
\draw[thick,blue!60!white,->] (1.1,0.4) -- (0.9,0.4);
\draw[thick,blue!60!white,->] (1.7,0) -- (1.5,-0.4);
\end{tikzpicture}
\caption{Sketch of $\Omega$. The dislocation cores around the intersection with the dislocation lines are drawn in red, the Burgers vectors (which are all in the same plane as $\Omega$) are drawn in blue.} \label{fig: 2d}
\end{subfigure}
\caption{Sketch of the geometry in the case of straight, parallel dislocation lines of edge type.} \label{figure: sketchgeometry}
\end{figure}

In this paper, we will restrict ourselves to study an infinite cylindrical crystal and straight, parallel edge dislocations. 
This symmetry allows us to reduce the problem to a plane orthogonal to the dislocations, see Figure \ref{figure: sketchgeometry}.
In fact, the relevant quantities are the in-plane components of the elastic strain and the dislocations are completely characterized by their intersection with this plane.
The presence of dislocations is then modeled by an incompatibility of the planar elastic strain $\beta: \Omega \rightarrow \R^{2\times2}$, \cite{Ny53}, precisely 
\begin{equation}\label{eq: incompatibility}
\operatorname{curl }\beta = \sum_{i} \xi_i \delta_{x_i}
\end{equation}
 for dislocations at intersection points $x_i$ and corresponding Burgers vectors $\xi_i$ which depend on the crystalline lattice and in particular on the interatomic distance $\varepsilon >0$.
 In the case of linearized elasticity the stored elastic energy for a planar elastic strain $\beta$ and a corresponding dislocation density $\mu$ is given by 
 \begin{equation}\label{eq: elasticenergy}
 \int_{\Omega_{\varepsilon}(\mu)} \frac12 \mathcal{C} \beta : \beta \, dx,
 \end{equation}
where $\Omega_{\varepsilon}(\mu) = \Omega \setminus \bigcup_i B_{\varepsilon}(x_i)$ and $\mathcal{C} \in \R^{2\times2\times2\times2}$ is a linear elasticity tensor, i.e., it is positive definite on symmetric matrices.
It is necessary to exclude the discs with radius $\varepsilon$, the so-called cores, around the dislocation in order to regularize the energy.
Indeed, condition \eqref{eq: incompatibility} is incompatible with the space $L^2(\Omega)$.
Precisely, one can show that if $\operatorname{curl }\beta = \xi \delta_0$ then
\begin{equation}\label{eq: lowerboundannulus}
\int_{B_R(0) \setminus B_r(0)} \frac12 \mathcal{C} \beta:\beta \, dx \geq c\left(\frac Rr\right) |\xi|^2 \log\left((\frac Rr \right),
\end{equation}
where $c\left(\frac Rr\right)$ denotes the inverse of Korn's constant for the annulus $B_R(0) \setminus B_r(0)$.\\
In \cite{CeLe05}, the authors derive an asymptotic formula for the energy \eqref{eq: elasticenergy} as they send $\varepsilon \to 0$. 
Garroni, Leoni, and Ponsiglione show in \cite{GaLePo10} that as the number of dislocations $N_{\varepsilon}$ goes to infinity and $\varepsilon \to 0$ the energy \eqref{eq: elasticenergy} features two competing effects, the self-energy of the dislocations and the long-range elastic interaction of the dislocations.
In view of \eqref{eq: lowerboundannulus} the authors compute the limit of the elastic energy rescaled by $N_{\varepsilon} |\log \varepsilon|$ where loosely speaking $N_{\varepsilon}$ is the number
of dislocations in the system.
The limiting behavior of the stored elastic energy depends on the scaling of $N_{\varepsilon}$.
In the subcritical regime, $N_{\varepsilon} \ll |\log \varepsilon|$, the authors derive in the sense of $\Gamma$-convergence a line-tension limit of the form
\begin{equation}\label{eq: limitsubcritical}
\int_{\Omega} \varphi \left( \frac{d\mu}{d|\mu|} \right) \, d|\mu|,
\end{equation}
where $\varphi$ is a subadditive, 1-homogeneous self-energy-density and $\mu$ is the limit of the suitably rescaled dislocation densities. \\
In the supercritical regime, $N_{\varepsilon} \gg |\log \varepsilon|$ the elastic interaction dominates and the $\Gamma$-limit is simply the elastic energy of the limit elastic strain. \\
In the critical regime, $N_{\varepsilon} \approx |\log \varepsilon|$, the $\Gamma$-limit is a strain-gradient model for plasticity (see, for example, \cite{Gu02,FlHu93}) 
\begin{equation}\label{eq: straingradient}
\int_{\Omega} \frac12 \mathcal{C} \beta : \beta \, dx + \int_{\Omega} \varphi\left( \frac{d\mu}{d|\mu|} \right) \, d|\mu|.
\end{equation}
The limit variables $\beta$ and $\mu$ are still coupled through the relation $\operatorname{curl }\beta = \mu$.\\
Note here that for all the results above the authors assume that the dislocations are \textit{well-separated} on an intermediate scale which is much larger than $\varepsilon^{\gamma}$ for any fixed $0 < \gamma < 1$.
This allows to compute the self-energy of all dislocations individually and then relax these in a second step on a larger scale to obtain $\varphi$. \\
In order to obtain compactness for the elastic strains, the authors prove a generalization of Korn's inequality for fields with non-zero curl.\\
Again under the assumption of well-separateness of dislocations, similar results are derived in \cite{MuScZe14,ScZe12,Gi17} for a nonlinear rotationally invariant elastic energy density.
Here, the generalized Korn's inequality is replaced by a generalized version of the Friesecke-James-M\"uller rigidity estimate, \cite{FrJaMu02}.

In \cite{dLGaPo12}, the authors analyze the energy \eqref{eq: elasticenergy} complemented by a core penalization $|\mu|(\Omega)$ without the assumption of well-separateness and derive in the subcritical regime the same line-tension limit as in \eqref{eq: limitsubcritical}. 
Without the well-separateness of dislocations the self-energy cannot be computed for each dislocation individually.
To overcome this problem, the authors adapt the ball-construction technique which was developed in the context of the Ginzburg-Landau functional (see \cite{Je99,Sa98}).
The building block to obtain meaningful lower bounds during the ball-construction technique are energy estimates on annuli.
In view of \eqref{eq: lowerboundannulus}, the main difficulty is that on thin annuli, there is a massive loss of rigidity.
In the subcritical regime, this problem can be overcome by a combinatorial argument which ensures that during a ball construction all occurring annuli have a minimal thickness. \\
Modifications of the ball construction were also used successfully in the context of discrete screw dislocations, \cite{AldLGaPo14}, and to identify a Cosserat-like behavior near grain boundaries, \cite{LaLu16}.

In this paper we study the critical regime \textit{without} the assumption of well-separateness of dislocations.
The same combinatorial argument as in the dilute regime cannot be applied anymore as in the critical regime there are simply too many dislocations in the system to control the thickness of occurring annuli during the ball-construction.
We present a combination of combinatorial arguments and a technique to modify the elastic strain locally to overcome this difficulty (see Proposition \ref{prop: lowerbound1} and Proposition \ref{prop: lowerbound2}).
The main result of this work is the derivation of energy \eqref{eq: lowerboundannulus} from \eqref{eq: elasticenergy} in the sense of $\Gamma$-convergence without well-separateness of dislocations, Theorem \ref{theorem: gammanowell}. 

This article is ordered as follows. 
First, we state the precise mathematical setting of the problem and the main results in Section \ref{sec: setnowell} and Section \ref{sec: mainnowell}, respectively.
In section \ref{section: preliminaries}, we revisit the ball construction technique as it is known, for example, from \cite{Sa98}, and introduce the self-energy.
Next, we prove the key lower bounds for compactness and the $\Gamma$-convergence result in Section \ref{section: keylower}. 
In Section \ref{section: comp}, we prove compactness.
Then, we present the proof of the $\Gamma$-convergence result in the Sections \ref{section: liminf} and \ref{section: limsup}.

\subsection{Setting of the Problem}\label{sec: setnowell}
Throughout this chapter we consider $\Omega \subseteq \R^2$ to be a bounded, simply connected Lipschitz domain which represents the horizontal cross section of a cylindrical crystal.
We denote by $\varepsilon > 0$ the lattice spacing. \\
Moreover, we consider the set of $normalized$ minimal Burgers vectors in the horizontal plane to be $S=\{b_1,b_2\}$ for two linearly independent vectors $b_1,b_2 \in \R^2$. 
The set of (normalized) admissible Burgers vectors is then given by $\mathbb{S} = \operatorname{span}_{\mathbb{Z}} S$.
We consider the following space of admissible dislocation densitites
\begin{equation*}
X(\Omega) = \left\{ \mu \in \mathcal{M}(\Omega;\mathbb{R}^2): \mu = \sum_{i=1}^N \xi_i \delta_{x_i} \text{ for some } N\in \mathbb{N}, 0\neq\xi_i \in \mathbb{S}, \text{ and } x_i \in \Omega \right\}.
\end{equation*}
Note that dealing with a linearized energy density allows us to scale out the dependence of the admissible Burgers vectors from the lattice spacing.
Associated to $\mu \in X(\Omega)$, we consider the strains generating $\mu$.  
As a strain satisfying $\operatorname{curl} \beta = \mu$ for some $\mu \in X(\Omega)$ cannot be in $L^2(\Omega;\mathbb{R}^{2 \times 2})$, we choose a core radius approach, meaning that we consider the reduced domain 
\begin{equation*}
\Omega_{\varepsilon}(\mu) = \Omega \setminus \bigcup_{x \in \operatorname{supp}(\mu)} B_{\varepsilon}(x).
\end{equation*}
In general, we write $\Omega_r(\mu) = \Omega \setminus \bigcup_{x \in \operatorname{supp}(\mu)} B_{r}(x)$ for some $r>0$. \\
The condition \eqref{eq: incompatibility} is then replaced by a circulation condition around the cores.
We define the admissible strains as
\begin{align*}
\mathcal{A}\mathcal{S}_{\varepsilon}(\mu)= \bigg\{ \beta \in L^2(\Omega;\mathbb{R}^{2\times2}): \beta= 0 \text{ in } \Omega \setminus \Omega_{\varepsilon}(\mu), \, \operatorname{curl }\beta = 0 \text{ in } \mathcal{D}'(\Omega_{\varepsilon}(\mu)), \text{ and for every smoothly}
\\ \text{ bounded open set } A \subseteq \Omega \text{ such that } \partial A \subseteq \Omega_{\varepsilon}(\mu) \text{ it holds that } \int_{\partial A} \beta \cdot \tau \,d\mathcal{H}^1 = \mu(A) \bigg\}.
\end{align*}
Here, $\beta \cdot \tau$ has to be understood in the sense of traces, see \cite[Theorem 2]{DaLi88}. 
Note that if the core $B_{\varepsilon}(x_i)$ of a dislocation with Burgers vector $\xi$ does not intersect any other core, the definition of $\mathcal{A}\mathcal{S}_{\varepsilon}$ implies that 
\begin{equation*}
\int_{\partial B_{\varepsilon}(x_i)} \beta \cdot \tau \, d\mathcal{H}^1 = \xi. 
\end{equation*}
Instead of this circulation condition, one could also consider the set $X(\Omega)$ to consist of more regular measures such as 
\begin{equation*}
\frac{\xi }{\pi \varepsilon^2} \, \mathcal{L}^2_{|B_{\varepsilon}(x)}, \, \frac{\xi }{2 \pi \varepsilon} \, \mathcal{H}^1_{|\partial B_{\varepsilon}(x)} \text{ or } \xi \delta_x * \rho_{\varepsilon} \text{ where } \rho_{\varepsilon} \text{ is a standard mollifier}
\end{equation*}
and a strict $\operatorname{curl}$-condition for the admissible strains.
These other possibilities are not equivalent but turn out to produce the same limit energy.
\newline \\
As we focus on the critical regime, we define the rescaled energy $F_{\varepsilon}: \mathcal{M}(\Omega;\mathbb{R}^2) \times L^2(\Omega;\mathbb{R}^{2 \times 2}) \rightarrow [0,\infty]$ as
\begin{equation*}
F_{\varepsilon}(\mu,\beta) = \begin{cases} \frac1{|\log \varepsilon|^2} \left(\int_{\Omega_{\varepsilon}(\mu)} \frac12 \mathcal{C}  \beta: \beta \, dx + |\mu|(\Omega) \right) &\text{if } \mu \in X(\Omega) \text{ and }\beta \in \mathcal{A}\mathcal{S}^{lin}_{\varepsilon}(\mu), \\
+\infty &\text{else},
\end{cases}
\end{equation*}
for an elasticity tensor $\mathcal{C} \in \mathbb{R}^{2\times2\times2\times2}$ which acts only on the symmetric part of a matrix and satisfies 
\begin{equation}\label{eq: elastictensor}
l |F_{sym}|^2 \leq \mathcal{C} F : F \leq L |F_{sym}|^2 \text{ for all } F \in \R^{2\times2}
\end{equation}
for some constants $l,L>0$. \\
Hence, the energy consists of a linearized elastic part and an energy associated to the core of each dislocation.
The core penalization is expected not to contribute in the limit as the dislocation densities are expected to be of order $|\log \varepsilon|$. 
In \cite{Po07} it is shown that in a discrete setting the energy of screw dislocations inside the core is indeed of order $1$.
The same penalization was also used in \cite{dLGaPo12} in the subcritical regime.\\
Finally, we introduce notation for local versions of $X(\Omega), \mathcal{A}\mathcal{S}_{\varepsilon}$, and the energy $F_{\varepsilon}$.
Let $U \subseteq \Omega$ be open.
In the following, we write $X(U)$ for the admissible dislocation densities on $U$ (simply replace $\Omega$ in the definition by $U$).
For $\mu \in X(U)$, we denote by $\mathcal{A}\mathcal{S}_{\varepsilon}(\mu,U)$ the strains generating $\mu$ in $U$ (again replace $\Omega$ by $U$ in the definition of $\mathcal{A}\mathcal{S}_{\varepsilon}$).
Finally, we write $F_{\varepsilon}(\cdot,\cdot,U)$ for the functional defined analogously to $F_{\varepsilon}$ where $\Omega$ is replaced by $U$.

%%%%%%%%%%%%%%%%%%%%%%%%%%%%%%%%%%%%%%%%%%%%%%%%%%%%%%%%

\subsection{Outline}\label{sec: mainnowell}

We define the limit energy $F: \mathcal{M}(\Omega;\mathbb{R}^2) \times L^2(\Omega;\mathbb{R}^{2 \times 2}) \rightarrow [0,\infty]$ as
\begin{equation*}
F(\mu,\beta) = \begin{cases} \int_{\Omega} \frac12 \mathcal{C} \beta: \beta \, dx + \int_{\Omega} \varphi\left(\frac{d\mu}{d|\mu|}\right) \, d |\mu| &\text{if } \mu \in \mathcal{M}(\Omega;\mathbb{R}^2) \cap H^{-1}(\Omega;\mathbb{R}^2), \\
&\beta \in L^2(\Omega;\mathbb{R}^{2 \times 2}), \text{ and } \operatorname{curl }\beta = \mu, \\
+\infty &\text{else }.
\end{cases}
\end{equation*}
Here, $\varphi$ is the relaxed self-energy density which will be defined in \eqref{definition: phi}.
This is the same limit as also obtained in \cite{GaLePo10} for a linearized model \textit{with} the assumption of well-separateness of dislocations.
\\
Before we define the topology which we use to prove the $\Gamma$-convergence of $F_{\varepsilon}$ to $F$, we introduce the flat norm.
Given a measure $\mu \in \mathcal{M}(\Omega;\R^2)$, we define the flat norm by
\begin{equation*}
\lm \mu \rm_{flat} = \sup_{\varphi \in W^{1,\infty}_0(\Omega;\R^2); Lip(\varphi) \leq 1} \int_{\Omega} \varphi \, d\mu.
\end{equation*}
%By the Arzel\`{a}-Ascoli theorem the embedding $W_0^{1,\infty}(\Omega;\R^2) \hookrightarrow C_0^0(\Omega;\R^2)$ is compact. 
%This implies in particular that for a sequence of measures $\mu_k \in \mathcal{M}(\Omega;\R^2)$ converging to $\mu \in \mathcal{M}(\Omega;\R^2)$ in the sense of weak$*$-convergence of measures, it follows the convergence with respect to the flat norm.
Now, we define the convergence of dislocation densities and strains that we use in the $\Gamma$-convergence result.
\begin{definition}\label{def: convlinearized}
Let $\varepsilon_k \to 0$. 
We say that a sequence $(\mu_k,\beta_k)_k \subseteq \mathcal{M}(\Omega;\mathbb{R}^2) \times L^2(\Omega;\mathbb{R}^{2 \times 2})$ converges to $(\mu,\beta) \in \mathcal{M}(\Omega;\mathbb{R}^2) \times L^2(\Omega;\mathbb{R}^{2 \times 2})$ if
\begin{equation*}
\frac{\mu_k}{|\log \varepsilon_k|} \rightarrow \mu \text{ in the flat topology and } \frac{\beta_k}{|\log \varepsilon_k|} \rightharpoonup \beta \text{ in } L^2(\Omega;\mathbb{R}^{2 \times 2}).
\end{equation*}
\end{definition}
\begin{remark}
As the linearized elastic energy only contains the symmetric part of $\beta$, one could expect to define the convergence in this context such that $\frac{(\beta_k)_{sym}}{|\log \varepsilon_k|}$ or in view of Korn's inequality $\frac{\beta_k - W_k}{|\log \varepsilon_k|}$ skew-symmetric matrices $W_k$ converges weakly to $\beta$.
The compactness result will involve a statement of the latter type, see Theorem \ref{thm: compactness}.
However, it is not possible to derive exactly the weak convergence on all of $\Omega$ but only local versions of it.
 As a $\liminf$-inequality is still valid for the convergence of the compactness result, this convergence could be seen as the most natural one for the problem.
Yet, for the sake of a simpler notation, we stick to the convergence defined as above, in particular since the additive appearance of the skew-symmetric matrices leaves no footprint in the limit.
\end{remark}

The main result of this paper is then the following.
\begin{theorem}\label{theorem: gammanowell}
As $\varepsilon \to 0$ the energies $F_{\varepsilon}$ $\Gamma$-converge with respect to the convergence in Definition \ref{def: convlinearized} to $F$.
\end{theorem}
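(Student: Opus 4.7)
The proof splits into the two standard halves of a $\Gamma$-convergence statement: a $\liminf$-inequality and the construction of a recovery sequence.

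For the $\liminf$-inequality the starting point is a sequence $(\mu_k,\beta_k)$ with $\sup_k F_{\varepsilon_k}(\mu_k,\beta_k) < \infty$ converging in the sense of Definition \ref{def: convlinearized} to $(\mu,\beta)$. The plan is to decompose the energy into a ``concentrated'' part that produces the self-energy term $\int_{\Omega}\varphi(d\mu/d|\mu|)\,d|\mu|$ and a ``diffuse'' part that produces $\tfrac12\int_\Omega\mathcal{C}\beta:\beta\,dx$. First, on each fixed open set compactly contained in $\Omega$ one runs the ball construction of Section \ref{section: preliminaries} on the dislocations of $\mu_k$ up to a mesoscopic scale $\varepsilon_k^\gamma$, combined with Propositions \ref{prop: lowerbound1} and \ref{prop: lowerbound2}, which deliver on each annulus an energy bound of the form $c\,|\mu_k(B)|^2\log(\rho_{\text{out}}/\rho_{\text{in}})$ after possibly a local modification of $\beta_k$ there; this is where the loss of rigidity on thin annuli is absorbed by the combinatorial/modification scheme. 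Summing these contributions and letting $\varepsilon_k\to 0$, the homogeneity and subadditivity of the candidate density together with a standard relaxation step localize the self-energy into $\int \varphi(d\mu/d|\mu|)\,d|\mu|$. The elastic contribution away from the balls is controlled from below by the weak $L^2$ convergence $(\beta_k-W_k)/|\log\varepsilon_k|\rightharpoonup\beta$ provided by Theorem \ref{thm: compactness} and convexity of $\tfrac12\mathcal{C}\cdot:\cdot$. Finally, additivity of the set function and a supremum over exhaustions by compactly contained open subsets recover the energy on all of $\Omega$.

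For the $\limsup$-inequality I would follow the Garroni--Leoni--Ponsiglione strategy: (i) reduce by density and lower semicontinuity of $F$ to a dense class of target pairs where $\mu=\sum_i\xi_i|\log\varepsilon_k|^{-1}\nu_i$ with $\nu_i$ a finite sum of Dirac deltas with integer multiplicities in a lattice direction (after the suitable rescaling) and $\beta\in C^\infty$ satisfies $\operatorname{curl}\beta=\mu$ on a small neighbourhood; (ii) around each limit ``mass point'' $x_0$ insert, in an annular region $B_{r_k}(x_0)\setminus B_{\varepsilon_k}(x_0)$ with $\varepsilon_k\ll r_k\ll 1$, the near-optimal microstructure whose relaxation defines $\varphi$ in \eqref{definition: phi}, namely $|\log\varepsilon_k|$-many dislocations realising the optimal arrangement, joined to the smooth background $\beta$ by a cut-off-and-curl-correction; (iii) verify by a direct computation that the self-energy of each inserted cluster produces $|\log\varepsilon_k|^2\,\varphi(d\mu/d|\mu|)|\mu|(\{x_0\})$ up to $o(1)$, while the interaction term with the smooth background is negligible at order $|\log\varepsilon_k|^2$; (iv) extract a diagonal sequence using the metrizability of the flat topology on bounded sets and the fact that $F$ is lower semicontinuous in the Definition \ref{def: convlinearized} topology.

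The routine part of the $\limsup$-argument is standard; the delicate point is to insert the optimal microstructure without disturbing the smooth background $\beta$, which is handled by the usual curl-correction via Bogovski\u\i{}-type solution operators, giving an $L^2$-error that vanishes when divided by $|\log\varepsilon_k|$. The genuine technical obstacle, and the main content of the paper, is the $\liminf$-inequality: in the critical regime the ball construction can no longer be run while keeping all annuli of uniformly macroscopic thickness, so the lower bound \eqref{eq: lowerboundannulus} applied annulus by annulus loses too much energy through the degeneracy of Korn's constant. Overcoming this requires the interplay of the combinatorial selection of ``good'' annuli and the local strain modifications encoded in Propositions \ref{prop: lowerbound1} and \ref{prop: lowerbound2}, the whole point of the present work.
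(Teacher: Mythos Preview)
Your proposal is correct and follows essentially the same strategy as the paper: split the energy into a part near the dislocations (handled via the ball construction together with Propositions \ref{prop: lowerbound1} and \ref{prop: lowerbound2}, then Reshetnyak) and a part away from them (handled by weak lower semicontinuity), and for the $\limsup$ import the recovery sequence from \cite{GaLePo10}.

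Two minor points where you take a slightly longer route than the paper. First, for the elastic lower bound you invoke Theorem \ref{thm: compactness} to get $(\beta_k-W_k)/|\log\varepsilon_k|\rightharpoonup\beta$ on compactly contained sets and then exhaust; this is unnecessary, since the $\Gamma$-$\liminf$ hypothesis already hands you $\beta_k/|\log\varepsilon_k|\rightharpoonup\beta$ in $L^2(\Omega)$, and the paper uses this directly together with $\1_{\Omega_{\varepsilon_k^\gamma}(\mu_k)}\to 1$ boundedly in measure. Second, for the $\limsup$ the paper does not redo steps (i)--(iv) but simply observes that the well-separated recovery sequence of \cite[Theorem 12]{GaLePo10} is admissible here as well, the only new term $|\mu_k|(\Omega)/|\log\varepsilon_k|^2$ being $o(1)$; your more detailed sketch is of course what that citation unpacks to.
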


The proof will be given in the Sections \ref{section: liminf} and \ref{section: limsup}. \\
Moreover, we prove the following compactness statement in Section \ref{section: comp}.
\begin{theorem}\label{thm: compactness}
Let $\Omega \subseteq \mathbb{R}^2$ a bounded, connected Lipschitz domain. 
Let $\varepsilon_k \to 0$ and consider a sequence $(\mu_k,\beta_k) \in X(\Omega) \times \mathcal{A}\mathcal{S}_{\varepsilon_k}(\mu_k)$ such that $\sup_k F_{\varepsilon_k}(\mu_k,\beta_k) < \infty$.
Then there exist $\beta \in L^2(\Omega;\mathbb{R}^{2\times2})$, $\mu \in \mathcal{M}(\Omega; \mathbb{R}^{2 }) \cap H^{-1}(\Omega;\mathbb{R}^{2})$, and $W_{k} \in Skew(2)$ such that for a (not relabeled) subsequence it holds
\begin{enumerate}
\item $\frac{\mu_k}{|\log \varepsilon_k|} \rightarrow \mu$ in the flat topology, \label{item: comp1}
\item for all $1 > \gamma > 0$ and $U \subset\subset\Omega$ we have $\frac{\beta_k - W_k}{|\log \varepsilon_k|} \1_{\Omega_{\varepsilon_k^{\gamma}}(\mu_k)} \rightharpoonup \beta$ in $L^2(U;\mathbb{R}^{2 \times 2})$, \label{item: comp2}
\item $\frac{(\beta_k)_{sym}}{|\log \varepsilon_k|} \rightharpoonup \beta_{sym}$ in $L^2(\Omega;\R^{2\times 2})$, \label{item: comp3}
\item $\operatorname{curl } \beta = \mu$. \label{item: comp4}
\end{enumerate}
Finally, it holds
\begin{equation*}
\liminf_{k \to \infty} F_{\varepsilon_k}(\mu_k,\beta_k,\Omega) \geq \int_{\Omega} \mathcal{C} \beta: \beta \,dx + \int_{\Omega} \varphi \left( \frac{d\mu}{d|\mu|} \right) \, d|\mu|.
\end{equation*}
\end{theorem}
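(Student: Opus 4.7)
The plan is to treat the four convergence statements and the liminf inequality in one sweep, with the deepest input being the two ball–construction lower bounds \textit{Proposition \ref{prop: lowerbound1}} and \textit{Proposition \ref{prop: lowerbound2}}, both of which I would apply as black boxes.

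First I would establish the mass bound $|\mu_k|(\Omega)\leq C|\log\varepsilon_k|$. The raw energy bound only gives $|\mu_k|(\Omega)\leq C|\log\varepsilon_k|^2$, which is useless for flat compactness, so one must extract an additional factor of $|\log\varepsilon_k|$ from the elastic contribution. This is exactly the role of the ball-construction lower bounds: applied to a suitable construction starting from the $\varepsilon_k$-cores and grown to some intermediate scale, they deliver an inequality of the form
\begin{equation*}
\int_{\Omega_{\varepsilon_k}(\mu_k)}\tfrac12\mathcal{C}\beta_k:\beta_k\,dx \;\geq\; c\,|\log\varepsilon_k|\,\Bigl(\textstyle\sum_i \varphi(\xi_i^{(k)})\Bigr) - C|\log\varepsilon_k|,
\end{equation*}
and since $\varphi$ is positive and $1$-homogeneous on $\mathbb{S}$, it controls $|\mu_k|(\Omega)$ from above. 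Combined with $\sup_k F_{\varepsilon_k}<\infty$ this yields the desired mass bound. Consequently $\mu_k/|\log\varepsilon_k|$ is bounded in $\mathcal{M}(\Omega;\mathbb{R}^2)$, and by the compact embedding $\mathcal{M}(\Omega)\hookrightarrow(W^{1,\infty}_0(\Omega))^*$ a subsequence converges in the flat norm to some $\mu$, giving \ref{item: comp1}.

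Next, for the strain compactness \ref{item: comp2} and \ref{item: comp3}, I would proceed in two steps. Since $\mathcal{C}$ is coercive on symmetric matrices, the energy bound directly gives $\|(\beta_k)_{\mathrm{sym}}/|\log\varepsilon_k|\|_{L^2(\Omega)}\leq C$, which yields weak $L^2$ compactness of the symmetric part, proving \ref{item: comp3}. For the full strain I fix $\gamma\in(0,1)$ and $U\subset\subset\Omega$, and run the ball construction at the enlarged core scale $\varepsilon_k^\gamma$. Applying this construction followed by the local strain modification technique of Propositions \ref{prop: lowerbound1}–\ref{prop: lowerbound2} produces a modified strain $\tilde\beta_k$ and a family of well-separated balls on whose complement $\tilde\beta_k$ is curl-free; a generalised Korn inequality on that complement then yields skew-symmetric $W_k$ with $\|(\beta_k-W_k)\mathbf{1}_{\Omega_{\varepsilon_k^\gamma}(\mu_k)}/|\log\varepsilon_k|\|_{L^2(U)}\leq C(U,\gamma)$. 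Weak $L^2$ compactness and a diagonal argument over exhausting $U_n\uparrow\Omega$ and $\gamma_n\uparrow 1$ give \ref{item: comp2}; consistency with \ref{item: comp3} forces the weak limit to agree with $\beta$ (the truncated complements have vanishing measure). The curl identity \ref{item: comp4} is obtained by testing $\operatorname{curl}\beta_k=\mu_k$ on $\mathcal{D}'(\Omega_{\varepsilon_k^\gamma}(\mu_k))$ against $\varphi\in C_c^\infty(\Omega)$ and passing to the limit: the bulk term converges by \ref{item: comp2}, the boundary contributions on $\partial B_{\varepsilon_k^\gamma}(x_i)$ assemble into $\mu_k/|\log\varepsilon_k|\to\mu$ in the flat norm, and $\operatorname{curl}\beta\in H^{-1}$ follows from $\beta\in L^2$.

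For the liminf inequality I would split the domain into the (small) union $B_k$ of balls produced by the ball construction at scale $\varepsilon_k^{\gamma_k}$ for $\gamma_k\uparrow 1$, and its complement. On $\Omega\setminus B_k$, the modified strain $\tilde\beta_k/|\log\varepsilon_k|$ converges weakly in $L^2$ to $\beta$ (up to skew corrections which drop out because $\mathcal{C}$ acts on the symmetric part), and convexity of the quadratic form $\frac12\mathcal{C}(\cdot):\cdot$ gives
\begin{equation*}
\liminf_{k\to\infty}\frac{1}{|\log\varepsilon_k|^2}\int_{\Omega\setminus B_k}\tfrac12\mathcal{C}\beta_k:\beta_k\,dx \;\geq\; \int_\Omega \tfrac12\mathcal{C}\beta:\beta\,dx.
\end{equation*}
On $B_k$, the ball-construction lower bounds of Propositions \ref{prop: lowerbound1}–\ref{prop: lowerbound2} plus the core penalty $|\mu_k|(\Omega)/|\log\varepsilon_k|^2$ produce (after a standard localisation/disintegration of $|\mu_k|/|\log\varepsilon_k|\rightharpoonup|\mu|$ in the flat topology and the $1$-homogeneity and lower-semicontinuity of $\varphi$) the bound $\liminf_k\int_{B_k}\tfrac12\mathcal{C}\beta_k:\beta_k\,dx/|\log\varepsilon_k|^2\geq\int_\Omega\varphi(d\mu/d|\mu|)\,d|\mu|$. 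Adding the two estimates gives the claim.

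The main obstacle is entirely in the first step: getting the linear-in-$|\log\varepsilon_k|$ mass bound without well-separateness, since the naive bound $|\mu_k|(\Omega)\leq C|\log\varepsilon_k|^2$ is off by exactly the factor that would make the flat compactness argument work. This is where the paper's modification of the ball construction—combining combinatorial control of admissible annulus thicknesses with local modifications of $\beta_k$ on thin annuli where Korn degenerates—is indispensable; everything else (flat compactness via weak* mass bounds, symmetric-part compactness via coercivity of $\mathcal{C}$, passage to the limit in the curl equation, convex lower-semicontinuity of the quadratic energy) is standard once that bound is available.
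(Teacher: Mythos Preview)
Your first step contains a genuine gap: the bound $|\mu_k|(\Omega)\leq C|\log\varepsilon_k|$ is simply false in general, and neither Proposition~\ref{prop: lowerbound1} nor Proposition~\ref{prop: lowerbound2} delivers it. Consider $\sim|\log\varepsilon_k|^2$ dipole pairs $(x_i,+\xi),(x_i',-\xi)$ with $|x_i-x_i'|\sim\varepsilon_k$: each pair contributes only $O(1)$ elastic energy and $O(1)$ core penalty, so $F_{\varepsilon_k}$ stays bounded while $|\mu_k|(\Omega)\sim|\log\varepsilon_k|^2$. Your displayed inequality $\int\tfrac12\mathcal{C}\beta_k:\beta_k\geq c|\log\varepsilon_k|\sum_i\varphi(\xi_i^{(k)})-C|\log\varepsilon_k|$ fails precisely here, because after one merging step the dipole balls carry zero accumulated Burgers vector and the annulus bound of Lemma~\ref{lemma: boundcirc} yields nothing. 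What Proposition~\ref{prop: lowerbound1} actually produces is a \emph{modified} strain $\tilde\beta_k$ whose curl $\tilde\mu_k$ satisfies $|\tilde\mu_k|(\Omega)\leq C|\log\varepsilon_k|$; this modified measure is not $\mu_k$.

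The paper's route is therefore different in exactly this place: one takes the modified measure $\tilde\mu_k=\operatorname{curl}\tilde\beta_k$ from Proposition~\ref{prop: lowerbound1}, uses its linear mass bound to get weak$^*$ (hence flat) compactness of $\tilde\mu_k/|\log\varepsilon_k|$, and then shows separately that $(\mu_k-\tilde\mu_k)/|\log\varepsilon_k|\to 0$ in the flat norm. The latter uses that $\mu_k$ and $\tilde\mu_k$ agree on each connected component of $\bigcup_{x\in\operatorname{supp}\mu_k}B_{\varepsilon_k^\gamma}(x)$ (this is item~\ref{item: lowerbound14} of Proposition~\ref{prop: lowerbound1}), and that these components have diameter $\lesssim|\log\varepsilon_k|^2\varepsilon_k^\gamma$, so testing against a $1$-Lipschitz function gives a vanishing contribution. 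This ``flat-norm-via-small-components'' argument is the missing idea in your proposal. Once it is in place, the remaining steps you outline (generalized Korn on compactly contained sets for~\ref{item: comp2}, coercivity for~\ref{item: comp3}, passing to the limit in the curl relation for~\ref{item: comp4}, and the split into near/far for the liminf) are essentially correct and match the paper's argument.
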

\begin{remark}
Notice that we need the localized weak convergence only if we want to control the full strains $\beta_k$.
The symmetric parts $(\beta_k)_{sym}$ clearly converge weakly on the full domain.
\end{remark}

%%%%%%%%%%%%%%%%%%%%%%%%%%%%%%%%%%%%%%%%%%%%%%%%%%%%%%%
\section{Preliminaries}\label{section: preliminaries}
\subsection{The Self-Energy}\label{section: self-energy}

In this section, we define the self-energy density $\varphi$.
For proofs of the statements, we refer to \cite{GaLePo10}. \\
\noindent Let $0 < r_1 < r_2$ and $\xi \in \R^2$. 
We define
\begin{align*}
\mathcal{A}\mathcal{S}_{r_1,r_2}(\xi) = \left\{ \eta \in L^2\left( B_{r_2}(0) \setminus B_{r_1}(0);\R^{2\times 2} \right): \operatorname{curl }\eta = 0  \text{ and } \int_{\del B_{r_1}(0)} \eta \cdot t = \xi \right\}.
\end{align*}
Here, $\tau$ denotes the unit tangent to $\partial B_{r_1}(0)$.
Again, the circulation condition has to be understood in the sense of traces, cf.~\cite[Theorem 2]{DaLi88}. \\
Next, we set
\begin{equation}\label{def:psirR}
\psi_{r_1,r_2}(\xi) = \min \left\{ \frac12 \int_{B_{r_2}(0) \setminus B_{r_1}(0)} \mathcal{C}\eta:\eta \, dx: \, \eta \in \mathcal{A}\mathcal{S}_{r_1,r_2}(\xi) \right\},
\end{equation} 
where $\mathcal{C} = \frac{\del^2 W}{\del^2 F}(Id)$. 
Note that by scaling it holds that $\psi_{r_1,r_2}(\xi) = \psi_{\frac{r_1}{r_2},1}(\xi)$.
The special case $r_2=1$ will be denoted by
\begin{equation}\label{definition: psi}
 \psi(\xi,\delta) = \min \left\{ \frac12 \int_{B_1(0) \setminus B_{\delta}(0)} \mathcal{C}\eta:\eta \, dx: \, \eta \in \mathcal{A}\mathcal{S}_{1,\delta}(\xi) \right\}.
\end{equation}
For fixed $\xi \in \R^2$ the function $\psi(\xi,\delta)$ scales as $|\log \delta|$. 
We state here the following result from \cite[Corollary 6 and Remark 7]{GaLePo10}.

\begin{proposition}\label{prop: convpsi}
 Let $\xi \in \R^2$, $\delta \in (0,1)$ and let $\psi(\xi,\delta)$ be defined as in \eqref{definition: psi}. 
Then for every $\xi \in \R^2$ it holds
 \begin{equation*}
  \lim_{\delta \to 0} \frac{\psi(\xi,\delta)}{|\log \delta|} = \psi(\xi),
 \end{equation*}
where $\psi: \R^2 \rightarrow [0,\infty)$ is defined by
\begin{equation}\label{def: selfenergy}
 \psi(\xi) = \lim_{\delta \to 0} \frac1{|\log \delta|} \frac12 \int_{B_1(0)\setminus B_{\delta}(0)} \mathcal{C} \eta_0: \eta_0 \, dx
\end{equation}
and $\eta_0: \R^2 \rightarrow \R^{2 \times 2}$ is a fixed distributional solution to
\begin{equation*}
 \begin{cases}
  \operatorname{curl }\eta_0 = \xi \delta_0 &\text{ in } \R^2, \\
  \operatorname{div }\eta_0 = 0 &\text{ in } \R^2. 
 \end{cases}
\end{equation*} 
In particular, both limits exist.
Moreover, there exists a constant $K>0$ such that for all $\delta >0$ small enough and $\xi \in \R^2$ it holds
\begin{equation*}
\left| \frac{\psi(\xi,\delta)}{|\log \delta|} - \psi(\xi)\right| \leq K \frac{|\xi|^2}{|\log \delta|}.
\end{equation*}
\end{proposition}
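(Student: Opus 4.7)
The plan is to first produce an explicit distributional solution $\eta_0$ of the curl--div system, then to exploit its scaling to show that the energy on dyadic annuli is constant, and finally to compare it to an arbitrary competitor via a gradient decomposition on the annulus.

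First, I would fix $\eta_0$ explicitly as $\eta_0(x) = \frac{1}{|x|}M\!\left(\frac{x}{|x|}\right)\xi$, e.g. by expressing $\eta_0$ through the fundamental solution of $-\Delta$: writing $\eta_0 = -\xi\otimes \nabla^{\perp}G + (\text{correction})$ with $G(x)=\frac{1}{2\pi}\log|x|$ and choosing the correction so that both $\operatorname{curl}\eta_0=\xi\delta_0$ and $\operatorname{div}\eta_0=0$ hold distributionally. The construction shows that $\eta_0$ is homogeneous of degree $-1$ and depends linearly on $\xi$, so $|\eta_0(x)|\le C|\xi|/|x|$. Passing to polar coordinates yields
\begin{equation*}
\tfrac{1}{2}\int_{B_1\setminus B_\delta}\mathcal{C}\eta_0:\eta_0\,dx \;=\; |\log \delta|\cdot \tfrac{1}{2}\int_0^{2\pi}\mathcal{C}\bigl(M(\theta)\xi\bigr):\bigl(M(\theta)\xi\bigr)\,d\theta,
\end{equation*}
so the right-hand quantity in \eqref{def: selfenergy} is exactly $|\log\delta|$ times a fixed quadratic form of $\xi$, which we call $\psi(\xi)$. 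This gives both the existence of the defining limit for $\psi$ and its quadratic scaling $\psi(\xi)\le C|\xi|^2$.

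For the upper bound on $\psi(\xi,\delta)$ I would simply observe that $\eta_0$ restricted to $B_1\setminus B_{\delta}$ lies in $\mathcal{A}\mathcal{S}_{1,\delta}(\xi)$, giving $\psi(\xi,\delta)\le |\log \delta|\,\psi(\xi)$ by the computation above. For the lower bound, note that $\mathcal{A}\mathcal{S}_{1,\delta}(\xi)$ is an affine space: if $\eta\in \mathcal{A}\mathcal{S}_{1,\delta}(\xi)$, then $\eta-\eta_0$ has zero curl in the annulus and vanishing circulation around $\partial B_\delta$, hence $\eta-\eta_0=\nabla v$ for some $v\in H^1(B_1\setminus B_\delta;\R^2)$. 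Expanding the quadratic form and using that the minimizer $v^*$ satisfies the Euler--Lagrange equation $\int \mathcal{C}(\eta_0+\nabla v^*):\nabla\varphi\,dx=0$ for all $\varphi$, I get
\begin{equation*}
\psi(\xi,\delta) \;=\; \tfrac{1}{2}\int_{B_1\setminus B_\delta}\mathcal{C}\eta_0:\eta_0\,dx\;-\;\tfrac{1}{2}\int_{B_1\setminus B_\delta}\mathcal{C}\nabla v^*:\nabla v^*\,dx.
\end{equation*}
Thus everything reduces to an upper bound on the correction $\tfrac{1}{2}\int \mathcal{C}\nabla v^*:\nabla v^*\,dx$ that is independent of $\delta$.

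To control this correction I would use the weak formulation of the EL: with $\varphi=v^*$ and integration by parts against the divergence-free $\eta_0$, the interior term drops and only boundary integrals on $\partial B_1$ and $\partial B_\delta$ remain, each involving the normal trace of $\mathcal{C}\eta_0$ paired with the trace of $v^*$. Since $\eta_0$ is homogeneous of degree $-1$, these traces are of order $|\xi|$ on $\partial B_1$ and $|\xi|/\delta$ on $\partial B_\delta$; combining with scale-invariant trace estimates (and the fact that the annulus $B_1\setminus B_\delta$ rescales to $B_{1/\delta}\setminus B_1$ without losing the relevant Neumann--type coercivity on the inner boundary) yields a bound $\tfrac12\int\mathcal{C}\nabla v^*:\nabla v^*\,dx \le K|\xi|^2$, uniformly in $\delta\in(0,1)$. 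Dividing by $|\log\delta|$ then gives both the convergence $\psi(\xi,\delta)/|\log\delta|\to \psi(\xi)$ and the uniform error estimate $|\psi(\xi,\delta)/|\log\delta|-\psi(\xi)|\le K|\xi|^2/|\log\delta|$. The main obstacle I anticipate is precisely this uniform (in $\delta$) boundary control of $v^*$: one has to rule out a spurious logarithmic blow-up of the correction as the hole shrinks, which requires exploiting the scaling invariance of the annulus together with the fact that the circulation of $\eta_0-\nabla v^*$ around $\partial B_\delta$ is fixed and equal to $\xi$.
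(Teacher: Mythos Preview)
The paper does not give its own proof of this proposition; it simply records the statement and refers to \cite[Corollary~6 and Remark~7]{GaLePo10}. Your outline is essentially the argument one finds there: exploit the degree~$-1$ homogeneity of $\eta_0$ to get the exact logarithmic scaling of its energy on annuli, write an arbitrary competitor as $\eta_0+\nabla v$, and show that the optimal correction $\nabla v^*$ carries only $O(|\xi|^2)$ energy uniformly in $\delta$.

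One point, however, needs care. In your integration-by-parts step you invoke ``the divergence-free $\eta_0$'', but the identity you actually need is
\[
\int_{B_1\setminus B_\delta}\mathcal{C}\eta_0:\nabla v^*\,dx
=\int_{\partial(B_1\setminus B_\delta)}(\mathcal{C}\eta_0)\nu\cdot v^*\,d\mathcal H^1,
\]
which requires $\operatorname{div}(\mathcal{C}\eta_0)=0$, not $\operatorname{div}\eta_0=0$. The condition written in the statement is $\operatorname{div}\eta_0=0$; this appears to be a transcription slip from \cite{GaLePo10}, where the relevant $\eta_0$ is the \emph{elastic equilibrium} field, i.e.\ $\operatorname{div}(\mathcal{C}\eta_0)=0$. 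With merely $\operatorname{div}\eta_0=0$ the bulk term $\int\operatorname{div}(\mathcal{C}\eta_0)\cdot v^*$ does not vanish; since $\operatorname{div}(\mathcal{C}\eta_0)$ is homogeneous of degree $-2$, this would in general contribute a term of order $|\log\delta|$ and spoil the claimed $O(|\xi|^2)$ correction (indeed, for generic $\mathcal{C}$ the field $\frac{1}{2\pi}\xi\otimes\frac{Jx}{|x|^2}$ does \emph{not} realise the minimal annular energy density). So your scheme is correct once you take $\eta_0$ to be the equilibrium field; you should flag this discrepancy rather than inherit the condition as written.

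The final uniform bound on the boundary contributions is the genuinely delicate step, and your sketch is thin there. The mechanism that rules out a logarithmic blow-up is that the Neumann datum $(\mathcal{C}\eta_0)\nu$ has zero mean on each circle (a consequence of $\operatorname{div}(\mathcal{C}\eta_0)=0$ via the divergence theorem on $B_r$), so that after fixing the rigid-motion freedom in $v^*$ one can pair it with a scale-invariant trace/Poincar\'e inequality on each boundary component; by homogeneity the contributions from $\partial B_1$ and $\partial B_\delta$ are each $O(|\xi|^2)$ independently of $\delta$. This is straightforward to make rigorous once the normalisation of $v^*$ is fixed, but it is the place where a reader would expect to see the details.
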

\begin{remark}
Note that the functions $\psi$ and $\psi(\cdot,\delta)$ are $2$-homogeneous and convex.
\end{remark}

The function $\psi$ is the (renormalized) limit self-energy of a single dislocation with Burgers vector $\xi$.
As we will deal with local systems of dislocations, which could lower their self-energy by interaction, we need to relax the self-energy density.

\begin{definition}
We define the function $\varphi: \R^2 \rightarrow [0,\infty)$ by
\begin{equation}\label{definition: phi}
 \varphi(\xi) = \min \left\{ \sum_{k=1}^{M} \lambda_k \psi( \xi_k): \sum_{k=1}^M \lambda_k \xi_k, \, M \in \mathbb{N}, \, \lambda_k \geq 0, \, \xi_k \in \mathbb{S} \right\}.
\end{equation}
\end{definition}
\begin{remark}
Indeed, it can be seen by the $2$-homogeneity of $\psi$ that the $\min$ in the definition of $\varphi$ exists.
\end{remark}
\begin{remark}
 Note that $\varphi$ is convex and $1$-homogenous.
\end{remark}

\subsection{Ball-Construction revisited}\label{sec: ballconstr}

In order to prove compactness or a $\liminf$-inequality, we need to prove bounds for (modified versions of) the dislocation densities $\mu_{\varepsilon}$ in terms of the energy $F_{\varepsilon}$.
The only information we can use is the circulation condition of a corresponding strain $\beta_{\varepsilon} \in \mathcal{A}\mathcal{S}_{\varepsilon}(\mu_{\varepsilon})$.
On a technical level, this circulation condition shares structural properties with the approximation of vortices in the Ginzburg-Landau model. 
A prominent role in proving lower bounds for the Ginzburg-Landau energy play ball constructions, see for example \cite{Je99,Sa98}.
The main ingredient for proving lower bounds, by the use of a ball construction, is a bound of the energy on annuli.
These estimates are based on the fact that a non-zero circulation around an annulus induces a certain minimal amount of energy.
As we deal with linearized elasticity, we control only the symmetric part of the strains.
The use of Korn's inequality allows us to get a lower bound on the energy in terms of the circulation of the strain, see Proposition \ref{lemma: boundcirc}.
As Korn's constant blows up for thin annuli, we need to avoid carefully annuli whose radii go below a certain ratio, see the proofs of Proposition \ref{prop: lowerbound1} and Proposition \ref{prop: lowerbound2}.
In the subcritical regime this can be done by a combinatorial argument, see \cite{dLGaPo12}, which is not valid anymore in the critical regime.
\newline \\
Now, let us shortly review the concept of a \textit{ball construction}.
For a visualization, see Figure \ref{fig: ballconstruction}.
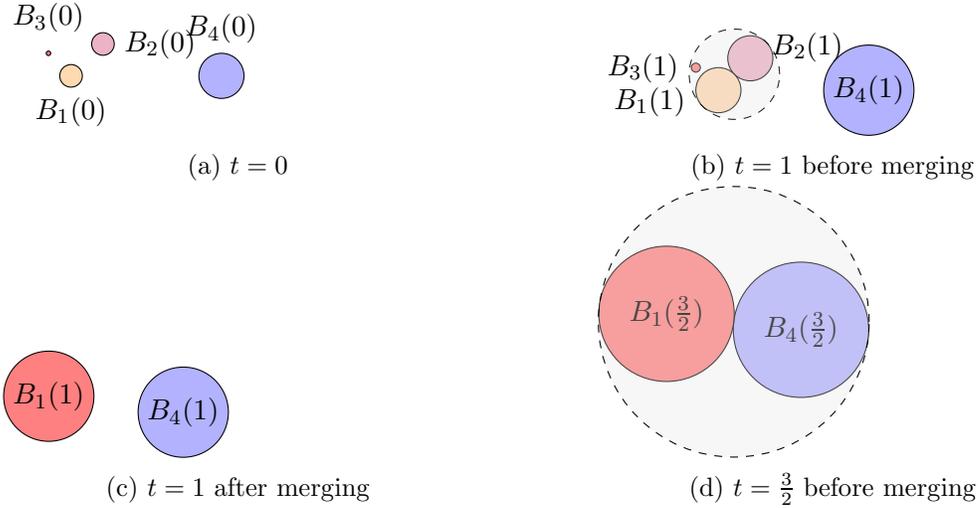
\begin{figure}[h]
\centering
\begin{subfigure}[b]{0.4\textwidth}
\begin{tikzpicture}[scale = 0.3]
\fill[orange!30!white] (0,0) circle (0.5cm);
\fill[blue!30!white] ({sqrt(71/2) + 1/sqrt(2)},0) circle (1cm);
\fill[purple!30!white] ({sqrt(2)},{sqrt(2)}) circle (0.5cm);
\fill[red!50!white] (-1,1) circle (0.1cm);

\draw (0,0) circle (0.5cm);
\draw ({sqrt(2)},{sqrt(2)}) circle (0.5cm);
\draw (-1,1) circle (0.1cm);
\draw ({sqrt(71/2) + 1/sqrt(2)},0) circle (1cm);

\draw ({sqrt(2)+0.5},{sqrt(2)}) node[anchor=west] {$B_2(0)$};
\draw (-1,1.5) node[anchor=south] {$B_3(0)$}; 
\draw (0,-0.5) node[anchor=north] {$B_1(0)$}; 
\draw ({sqrt(71/2) + 1/sqrt(2)},1) node[anchor=south] {$B_4(0)$};
\end{tikzpicture}
\caption{$t=0$}
\end{subfigure}
\qquad  \qquad 
\begin{subfigure}[b]{0.4\textwidth}
\begin{tikzpicture}[scale = 0.3]
\fill[orange!30!white] (0,0) circle (1cm);
\draw (0,0) circle (1cm);

\draw (-1,-0.5) node[anchor=east] {$B_1(1)$};

\fill[purple!30!white] ({sqrt(2)},{sqrt(2)}) circle (1cm);
\draw ({sqrt(2)},{sqrt(2)}) circle (1cm);

\draw ({sqrt(2)+0.5},{sqrt(2)+0.5}) node[anchor=west] {$B_2(1)$};

\fill[red!50!white] (-1,1) circle (0.2cm);
\draw (-1,1) circle (0.2cm);

\draw (-1.3,1) node[anchor=east] {$B_3(1)$};

\fill[blue!30!white] ({sqrt(71/2)+ 1/sqrt(2)},0) circle (2cm);
\draw ({sqrt(71/2)+ 1/sqrt(2)},0) circle (2cm);

\draw ({sqrt(71/2)+ 1/sqrt(2)},0) node {$B_4(1)$};

\draw[dashed] ({1/sqrt(2)},{1/sqrt(2)}) circle (2cm);
\fill[gray!20!white, opacity=0.3] ({1/sqrt(2)},{1/sqrt(2)}) circle (2cm);
\end{tikzpicture}
\caption{$t=1$ before merging}
\end{subfigure}

\begin{subfigure}[b]{0.4\textwidth}
\begin{tikzpicture}[scale = 0.3]
\fill[blue!30!white] ({sqrt(71/2)+ 1/sqrt(2)},0) circle (2cm);
\draw ({sqrt(71/2)+ 1/sqrt(2)},0) circle (2cm);

\draw ({sqrt(71/2)+ 1/sqrt(2)},0) node {$B_4(1)$};
\fill[red!50!white] ({1/sqrt(2)},{1/sqrt(2)}) circle (2cm);
\draw ({1/sqrt(2)},{1/sqrt(2)}) circle (2cm);

\draw ({1/sqrt(2)},{1/sqrt(2)}) node {$B_1(1)$};
\end{tikzpicture}
\caption{$t=1$ after merging}
\end{subfigure}
\qquad  \qquad 
\begin{subfigure}[b]{0.4\textwidth}
\begin{tikzpicture}[scale = 0.3]
\fill[blue!30!white] ({sqrt(71/2)+ 1/sqrt(2)},0) circle (3cm);
\draw ({sqrt(71/2)+ 1/sqrt(2)},0) circle (3cm);
\draw ({sqrt(71/2)+ 1/sqrt(2)},0) node {$B_4(\frac32)$};

\fill[red!50!white] ({1/sqrt(2)},{1/sqrt(2)}) circle (3cm);
\draw ({1/sqrt(2)},{1/sqrt(2)}) circle (3cm);
\draw ({1/sqrt(2)},{1/sqrt(2)}) node {$B_1(\frac32)$};

\fill[gray!20!white, opacity=0.3] ({sqrt(71/8) + 1/sqrt(2)},{1/sqrt(2)/2}) circle (6cm);
\draw[dashed] ({sqrt(71/8) + 1/sqrt(2)},{1/sqrt(2)/2}) circle (6cm);
\end{tikzpicture}
\caption{$t=\frac32$ before merging}
\end{subfigure}

\caption{Sketch of ball construction for four balls with $c=2$.} \label{fig: ballconstruction}
\end{figure}
\\ \underline{\textbf{Ball construction:}}
\newline \\
Fix $c>1$. 
Given a finite family of closed balls $(B_i)_{i \in I}$ with radii $R_i$, we perform the following construction. \newline \\
\underline{\textit{Preparation:}} Find a family of disjoint closed balls $(B_i(0))_{i \in I(0)}$ such that for each $i \in I$ there exists $j \in I(0)$ with the following properties:
$B_i \subseteq B_j(0)$ and $\operatorname{diam}(B_j(0)) \leq \sum_{i: B_i \subseteq B_j(0)} \operatorname{diam}(B_i)$. \\
It is not difficult to see that this is always possible.
\newline \\
\noindent \underline{\textit{Expansion:}} Define for $t>0$ and $i \in I(0)$ the radii $R_i(t)= c^t R_i(0)$ and consider the family of closed balls $(B_i(t))_{i \in I(0)}$ where $B_i(t)$ is the ball with the same center as $B_i(0)$ and radius $R_i(t)$. Moreover, let $I(t) = I(0)$.
We perform this expansion as long as the balls $(B_i(t))_{i \in I(t)}$ are pairwise disjoint. 
For the first $t>0$ such that the family $(B_i(t))_{i \in I(t)}$ is not pairwise disjoint anymore, perform the merging below.
\newline \\
\noindent \underline{\textit{Merging:}} If the family $(B_i(t))_{i \in I(t)}$ is not disjoint, find similarly to the preparation step a disjoint family of balls $(B_j(t))_{j \in J}$ such that for each $i \in I(t)$ there exists an index $j \in J$ which fulfills $B_i(t) \subseteq B_j$ and $\operatorname{diam}(B_j) \leq 2 \sum_{i: B_i(t) \subseteq B_j} R_i(t)$. 
For notational simplicity, let us assume that the index $i \in I(t)$ of a ball $B_i(t)$ that is not affected during the described procedure remains the same i.e., it holds $i \in J$ and $B_i$ is the same ball as $B_i(t)$. \\
Then, replace $I(t)$ by $J$, $(B_i(t))_{i \in I(t)}$ by $(B_j)_{j \in J}$ and the radii $R_i(t)$ by the corresponding $\frac12 \operatorname{diam}(B_j)$. \\
The time $t$ is called a merging time.
After the merging, we continue with the expansion below.
\newline \\
\noindent \underline{\textit{Expansion II:}} Let $\tau > 0$ be a merging time. 
For $t > \tau$, we define the new radii $R_i(t)= c^{t - \tau} R_i(\tau)$ and $I(t) = I(\tau)$.
Moreover, for $i \in I(t)$ set $B_i(t)$ to be the ball with the same center as $B_i(\tau)$ and radius $R_i(t)$. 
Perform this expansion as long as the family $(B_i(t))_{i \in I(t)}$ is disjoint. 
At the first $t > \tau$ such that this is not the case anymore, perform a merging as described above.
\newline \\
We refer to the family $(I(t),(B_i(t))_{i\in I(t)},(R_i(t))_{i\in I(t)})_t$ constructed as above as as the \textit{ball construction} starting with $(B_i)_{i \in I}$ and associated to $c$. \\
By the \textit{discrete ball construction}  starting with $(B_i)_{i \in I}$ and associated to $c$ we mean the discrete subfamily $(I(n),(B_i(n))_{i\in I(n)},(R_i(n))_{i\in I(n)})_{n \in \mathbb{N}}$.
\newline \\
Moreover, we introduce the following notation to link a ball in the construction at time $t$ with its past and future in the construction: For $s_2 > t > s_1 >0$ and $i \in I(t)$ let us define
\begin{align}
P_i^t(s_1) &= \left\{ B_j(s_1): j \in I(s_1) \text{ and } B_j(s_1) \subseteq B_i(t) \right\} \text{ and } \label{eq: defpi} \\
F_i^t(s_2) &= B_j(t_s) \text{ the unique ball at time } s_2 \text{ such that } B_i(t) \subseteq B_j(s_2).
\end{align}
\newline \\
\noindent Next, note the following classical property, c.f.~\cite{Je99}.

\begin{lemma}\label{lemma: ball construction}
Let $(B_i)_{i \in I}$ be a finite family of balls with radii $R_i$ and $c>1$. 
For the corresponding (discrete) ball construction it holds that:
\begin{enumerate}
\item $R_i(t) \leq c^t \sum_{j: B_j \subseteq B_i(t)} R_j$ for all $i \in I(t)$, \label{item: ball construction 1}
\item The construction is monotone in the following sense. Let $t > s \geq 0$. Then for every $i \in I(s)$ there exists $j \in I(t)$ such that $B_i(s) \subseteq B_j(t)$. 
In particular, it holds that $R_i(s) \leq R_j(t)$ and $\bigcup_{i \in I} B_i \subseteq \bigcup_{i \in I(s)} B_i(s) \subseteq  \bigcup_{i \in I(t)} B_i(t)$. \label{item: ball construction 2}
\end{enumerate}
\end{lemma}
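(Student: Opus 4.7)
The plan is to prove both parts simultaneously by induction on the number of merging times in the interval $[0,t]$, treating the two claims together so that the monotonicity of (2) can be used inside the induction step of (1).

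For part (\ref{item: ball construction 1}), I would begin with the base case $t = 0$: the preparation step yields $\operatorname{diam}(B_i(0)) \le \sum_{j:\,B_j \subseteq B_i(0)} \operatorname{diam}(B_j)$, which halves to the desired estimate with $c^0 = 1$. On an expansion interval, the expansion rule multiplies both the left-hand radius $R_i(t)$ and the factor $c^t$ on the right-hand side by the same factor $c^{\Delta t}$, so the inequality propagates unchanged. At a merging time $\tau$, the merging step guarantees $R_i(\tau) \le \sum_\ell R_\ell(\tau^-)$ where the sum runs over the balls $B_\ell(\tau^-) \subseteq B_i(\tau)$; inserting the inductive bound on each $R_\ell(\tau^-)$ and combining the sums yields the bound for $R_i(\tau)$.

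For part (\ref{item: ball construction 2}), monotonicity is essentially built into the construction. On an expansion interval, $B_i(s) \subseteq B_i(t)$ since the two balls share a center while $R_i(t) = c^{t-s} R_i(s) \ge R_i(s)$. At a merging time, each old ball is by definition contained in a unique new merged ball. Chaining these two kinds of inclusion across finitely many stages gives the claim for an arbitrary pair $s < t$. The radius inequality $R_i(s) \le R_j(t)$ then follows immediately from the containment of closed balls, and the inclusion of the unions by taking $\bigcup_i$ on both sides.

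The only step requiring genuine care is ruling out double counting in the induction step of part (1): if two distinct $B_{\ell_1}(\tau^-), B_{\ell_2}(\tau^-)$ are swallowed by a common merged ball $B_i(\tau)$, no initial $B_j$ may lie in both. This holds because the family $(B_\ell(\tau^-))_\ell$ stays pairwise disjoint strictly before $\tau$ (the merging is triggered at the first failure of disjointness), so by the monotonicity from part (2) every $B_j$ has a well-defined unique ancestor at time $\tau^-$. This bookkeeping point is the only non-mechanical ingredient; the remainder is a straightforward unrolling of the expansion and merging rules.
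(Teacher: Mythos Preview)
Your proposal is correct and follows the same approach as the paper's own proof, which is merely a two-line sketch stating that property (1) holds at $t=0$ and is preserved by expansion and merging, and that (2) is immediate from the construction. You have supplied the details the paper leaves implicit, including the bookkeeping argument ruling out double counting at a merging step.
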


\begin{proof}
Property \ref{item: ball construction 1}) is true for $t=0$.
It is easily seen that the the expansion and merging steps preserve this property for growing $t$. 
Property \ref{item: ball construction 2}) is also immediate from the construction.
\end{proof}

%The main idea of the ball construction is to obtain lower bounds during the expansion phase on the growing annuli.
%Hence, estimates on annuli play the role of building blocks for the bounds obtained through the ball construction.
%The main difference to the classical ball construction estimates in Ginzburg-Landau theory is that in the theory of linearized elastic energy there is a significant loss of rigidity on thin annuli which is expressed mathematically by the appearance of Korn\rq{}s constant in the estimate.
The cornerstone of lower bounds via the ball construction are estimates during the merging phase i.e., estimates on annuli.
The following estimate was already proven in \cite{dLGaPo12}. 
We state and prove it here for convenience of the reader.

\begin{lemma} \label{lemma: boundcirc}
Let $R>r>0$ and $\beta \in L^2(B_R(0)\setminus B_r(0);\mathbb{R}^{2\times2})$ such that $\operatorname{curl }\beta = 0$ in $B_R(0) \setminus B_r(0)$.
Then it holds that
\begin{enumerate}
\item $\int_{B_R(0) \setminus B_r(0)} \mathcal{C} \beta: \beta \, dx \geq \frac 1 {K(\frac Rr) \,2 \pi} |\xi|^2 \log \left(\frac R r\right)$, \label{item: boundcirc1}
\item $\int_{B_R(0) \setminus B_r(0)} |\beta|^2 \, dx \geq \frac1{2\pi} |\xi|^2 \log \left(\frac Rr\right)$. \label{item: boundcirc2}
\end{enumerate}
Here, $\xi = \int_{\partial B_r(0)} \beta \cdot \tau \, d\mathcal{H}^1$ where $\tau$ denotes the unit tangent to $\partial B_r(0)$ and $K(\frac R r)$ is Korn's constant for the annulus $B_R(0) \setminus B_r(0)$.
\end{lemma}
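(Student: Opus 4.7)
The plan is to prove (2) directly by a slicing argument on concentric circles, and then to deduce (1) by combining (2) with Korn's inequality on the annulus and the coercivity of $\mathcal{C}$.

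For part (2), I would first observe that, since $\operatorname{curl}\beta = 0$ on the annulus, a trace version of Stokes' theorem (cf.~\cite{DaLi88}) implies that the circulation $\int_{\partial B_\rho(0)} \beta\cdot\tau\, d\mathcal{H}^1$ is independent of $\rho \in (r,R)$ and therefore equals $\xi$ for every such $\rho$. Cauchy--Schwarz on the circle $\partial B_\rho(0)$ then yields
$$
|\xi|^2 \;\leq\; 2\pi\rho \int_{\partial B_\rho(0)} |\beta|^2\, d\mathcal{H}^1,
$$
so that $\int_{\partial B_\rho(0)} |\beta|^2\, d\mathcal{H}^1 \geq |\xi|^2/(2\pi\rho)$. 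Integrating in $\rho \in (r,R)$ and applying Fubini (equivalently, the coarea formula for the distance from the origin) converts the left-hand side into $\int_{B_R(0)\setminus B_r(0)} |\beta|^2\, dx$ and the right-hand side into $\frac{|\xi|^2}{2\pi}\log\left(\frac{R}{r}\right)$, giving (2).

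For part (1), the key observation is that replacing $\beta$ by $\beta - W$ for any constant skew-symmetric $W$ changes neither $\operatorname{curl}\beta$ nor the circulation on $\partial B_r(0)$, since a constant matrix has vanishing curl and $\int_{\partial B_r(0)} \tau\, d\mathcal{H}^1 = 0$. Moreover $(\beta - W)_{sym} = \beta_{sym}$. By Korn's inequality on the annulus $B_R(0)\setminus B_r(0)$ one can choose a skew $W$ such that
$$
\int_{B_R(0)\setminus B_r(0)} |\beta - W|^2\, dx \;\leq\; K\!\left(\tfrac{R}{r}\right)\int_{B_R(0)\setminus B_r(0)} |\beta_{sym}|^2\, dx.
$$
Applying part (2) to $\beta - W$ on the left and the coercivity bound $\mathcal{C}\eta:\eta \geq l|\eta_{sym}|^2$ to pass from $|\beta_{sym}|^2$ to $\mathcal{C}\beta:\beta$ on the right (with $l$ absorbed into the definition of $K$) yields (1).

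The main subtlety will be invoking Korn's inequality cleanly, since $\beta$ need not be a gradient on the (non-simply connected) annulus when $\xi\neq 0$: the constant $K(R/r)$ must be understood as the sharp constant for the inequality $\inf_{W \in Skew(2)} \|\beta - W\|_{L^2}^2 \leq K(R/r)\|\beta_{sym}\|_{L^2}^2$ in the class of $L^2$ curl-free fields, and it is precisely this constant that blows up as $R/r \to 1$. This is the ``loss of rigidity on thin annuli'' warned about in the introduction, and it is what forces the delicate combinatorial and local-modification arguments later in the paper.
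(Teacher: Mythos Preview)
Your proposal is correct and takes essentially the same approach as the paper: slice the annulus into concentric circles, use Cauchy--Schwarz (equivalently, Jensen) on each circle to bound the $L^2$ mass of $\beta\cdot\tau$ from below by the circulation, integrate in the radius to produce the logarithm, and invoke Korn's inequality on the annulus (with the coercivity of $\mathcal{C}$ absorbed into the constant) to pass from $|\beta - W|^2$ to $\mathcal{C}\beta:\beta$. The only cosmetic difference is that the paper proves (1) first and then observes that setting $W=0$ in the last two estimates gives (2), whereas you prove (2) first and deduce (1) from it; the underlying computation is identical.
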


\begin{proof}
We may assume that $\beta \in C^0(\overline{B_R(0)\setminus B_r(0)};\R^{2\times2})$.
Korn\rq{}s inequality provides a skew-symmetric matrix $W \in \mathbb{R}^{2 \times 2}$ such that
\begin{equation*}
\int_{B_R(0) \setminus B_r(0)} |\beta - W|^2 \, dx \leq K\left(\frac Rr\right)\int_{B_R(0) \setminus B_r(0)} \mathcal{C} \beta: \beta \, dx.
\end{equation*}
Using a change of variables one can further estimate
\begin{equation*}
\int_{B_R(0) \setminus B_r(0)} |\beta - W|^2 \, dx = \int_r^R \int_{\partial B_t(0)} |\beta - W|^2 \,d\mathcal{H}^1 \, dt \geq \int_r^R \int_{\partial B_t(0)} |(\beta - W)\cdot \tau|^2 \,d\mathcal{H}^1 \, dt.
\end{equation*}
Here, $\tau$ denotes the tangent to the corresponding $\partial B_t(0)$. \\
Jensen\rq{}s inequality yields
\begin{equation*}
\int_r^R \int_{\partial B_t(0)} |(\beta - W)\cdot \tau|^2 \,d\mathcal{H}^1 \, dt \geq  \int_r^R \frac1{2\pi t} \left| \int_{\partial B_t(0)} (\beta - W)\cdot \tau \,d\mathcal{H}^1\right|^2 \, dt = \frac1{2\pi} \log\left(\frac Rr\right) |\xi|^2.
\end{equation*}
Combining the estimates, we find \eqref{item: boundcirc1}.
The last two estimates for $W = 0$ show \eqref{item: boundcirc2}.
\end{proof}
\begin{remark}
It can be seen that $K(\frac{R}{r}) \rightarrow \infty$ as $\frac Rr \to 0$, see \cite{PaWe61}.
\end{remark}
%%%%%%%%%%%%%%%%%%%%%%%%%%%%%%%%%%%

\section{Proofs of the Main Results}

\subsection{The Main Ingredients for Lower Bounds}\label{section: keylower}

The main difficulty in a regime with more than $|\log \varepsilon|$ dislocations is that in a ball construction argument one cannot avoid the combinatorics of distinguishing balls which expand for a certain minimal time and therefore induce a relevant energy to the system from those that merge so frequently that they do not allow to estimate their corresponding energy uniformly due to the blow-up of Korn's constant on thin annuli. \\
In the following proposition, we show how to reduce the general situation in the critical regime to a situation that is easier to analyze. 
Essentially, we prove that in a neighborhood of the dislocations of order $\varepsilon^{\gamma}$ we can change the strain $\beta_{\varepsilon}$ slightly. 
The total variation of the $\operatorname{curl}$ of the new strain $\bar{\beta}_{\varepsilon}$ is controlled in terms of $|\log \varepsilon|$ and the $\operatorname{curl}$ is concentrated in at most $C |\log \varepsilon|$ balls with a radius that is much smaller than $\varepsilon^{\gamma}$ for some fixed $0 < \gamma < 1$.

\begin{proposition}\label{prop: lowerbound1}
Let $1> \alpha > \gamma > 0$, $\delta \geq 0$, and $K>0$. 
There exists $\varepsilon_0 = \varepsilon_0(\alpha,\gamma,K)>0$ such that for all $0 < \varepsilon < \varepsilon_0$ it holds the following: \\
Let $A_{\varepsilon} \subseteq \mathbb{R}^2$ open.
Let $\mu_{\varepsilon} \in X(A_{\varepsilon})$ and $\beta_{\varepsilon} \in \mathcal{A}\mathcal{S}_{\varepsilon}(\mu_{\varepsilon}, A_{\varepsilon})$ such that $\operatorname{dist}(\operatorname{supp}(\mu_{\varepsilon}), \partial A_{\varepsilon}) \geq \varepsilon^{\gamma}$ and $F_{\varepsilon}(\mu_{\varepsilon},\beta_{\varepsilon},A_{\varepsilon}) \leq K |\log \varepsilon|^{-\delta}$.
Then there exist a family of disjoint closed balls $(D_i^{\varepsilon})_{i \in I_{\varepsilon}}$ and a function $\bar{\beta}_{\varepsilon}: A_{\varepsilon} \rightarrow \mathbb{R}^{2 \times 2}$ such that 
\begin{enumerate}[label=(\roman*)]
\item $\operatorname{diam} D_i^{\varepsilon} \leq  \varepsilon^{\alpha}$ and $D_i^{\varepsilon} \cap \operatorname{supp }(\mu_{\varepsilon}) \neq \emptyset$ for all $i \in I_{\varepsilon}$, \label{item: lowerbound11}
\item $|I_{\varepsilon}| \leq C(\alpha,K) |\log \varepsilon|^{1-\delta}$, \label{item: lowerbound12}
\item $\bar{\beta}_{\varepsilon} = \beta_{\varepsilon}$ on $A_{\varepsilon} \setminus \bigcup_{ x \in \operatorname{supp }(\mu_{\varepsilon})} B_{\varepsilon^{\alpha}}(x)$, \label{lowerbound: lowerbound12ab}
\item $\operatorname{curl } \bar{\beta}_{\varepsilon} \in \mathcal{M}(A_{\varepsilon};\R^2)$, \label{item: lowerbound 13}
\item $\operatorname{supp }(\operatorname{curl }\bar{\beta}_{\varepsilon}) \subseteq \bigcup_{i \in I_{\varepsilon}} D_i^{\varepsilon}$  and $(\operatorname{curl } \bar{\beta}_{\varepsilon}) (U) = \mu_{\varepsilon}(U)$ for each connected component $U$ of $A_{\varepsilon}$, \label{item: lowerbound14}
\item $|\operatorname{curl} \bar{\beta}_{\varepsilon}|(A_{\varepsilon})| \leq C(\alpha,K) |\log \varepsilon|^{1 - \delta}$, \label{item: lowerbound15}
\item $ \frac1{|\log \varepsilon|^2} \int_{A_{\varepsilon}} \frac12 \mathcal{C} \bar{\beta}_{\varepsilon} : \bar{\beta}_{\varepsilon} \, dx \leq F_{\varepsilon}(\mu_{\varepsilon},\beta_{\varepsilon},A_{\varepsilon}) + C(\alpha,K) \frac{ F_{\varepsilon}(\mu_{\varepsilon},\beta_{\varepsilon},A_{\varepsilon})}{|\log \varepsilon|}$. \label{item: lowerbound1last}
\end{enumerate}
\end{proposition}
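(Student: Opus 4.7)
The plan is to produce the disjoint family $(D_i^\varepsilon)$ via a ball construction and then to replace $\beta_\varepsilon$ inside each $D_i^\varepsilon$ by an explicit strain whose curl is a single Dirac mass equal to the net Burgers vector of the ball. I would run the ball construction of Section~\ref{sec: ballconstr} on $\{B_\varepsilon(x) : x \in \operatorname{supp}(\mu_\varepsilon)\}$ with a fixed expansion factor $c > 1$, freezing any ball the moment further expansion would push its diameter above $\varepsilon^\alpha$; the frozen family is taken to be $(D_i^\varepsilon)$. By Lemma~\ref{lemma: ball construction}(1) these are disjoint, of diameter $\leq \varepsilon^\alpha$, and each meets $\operatorname{supp}(\mu_\varepsilon)$, yielding \ref{item: lowerbound11}. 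The net Burgers vector $\xi_i^{\mathrm{net}} := \mu_\varepsilon(D_i^\varepsilon)$ is preserved through every merging and will drive the subsequent modification.

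For \ref{item: lowerbound12} and \ref{item: lowerbound15}, I would control $|I_\varepsilon|$ and $\sum_i|\xi_i^{\mathrm{net}}|$ through the elastic energy. Between consecutive mergings the expansion factor is at least $c$, so Korn's constant on the swept annulus is uniformly bounded by $\bar K := K(c)$, and Lemma~\ref{lemma: boundcirc} contributes $\gtrsim |\xi|^2 \log c$ per such ``clean'' phase. Summing over disjoint annular rings along the history of each $D_i^\varepsilon$ yields
\begin{equation*}
\int_{A_\varepsilon} \tfrac{1}{2}\,\mathcal{C}\beta_\varepsilon:\beta_\varepsilon \, dx \,\geq\, \frac{\log c}{4\pi \bar K} \sum_i \tau_i\,|\xi_i^{\mathrm{net}}|^2,
\end{equation*}
where $\tau_i$ is the number of clean phases in the ancestry of $D_i^\varepsilon$. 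Since each $D_i^\varepsilon$ has grown from the $\varepsilon$-scale to the $\varepsilon^\alpha$-scale, $\tau_i \log c \gtrsim (1-\alpha)|\log\varepsilon|$. Combined with $F_\varepsilon \leq K|\log\varepsilon|^{-\delta}$ and the strict positive lower bound $|\xi_i^{\mathrm{net}}| \geq \min_{\xi \in \mathbb{S}\setminus\{0\}}|\xi|$ whenever nonzero, both bounds $|I_\varepsilon| \leq C|\log\varepsilon|^{1-\delta}$ and $\sum_i|\xi_i^{\mathrm{net}}| \leq C|\log\varepsilon|^{1-\delta}$ follow.

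For the modification, set $\bar\beta_\varepsilon := \beta_\varepsilon$ on $A_\varepsilon \setminus \bigcup_x B_{\varepsilon^\alpha}(x)$, which gives \ref{lowerbound: lowerbound12ab}. On each $D_i^\varepsilon = B_{r_i}(c_i)$ define $\bar\beta_\varepsilon$ as the energy-minimising strain subject to $\operatorname{curl}\bar\beta_\varepsilon = \xi_i^{\mathrm{net}}\delta_{c_i}$ and trace $\beta_\varepsilon|_{\partial D_i^\varepsilon}$; well-posedness is ensured by the compatibility $\int_{\partial D_i^\varepsilon}\beta_\varepsilon\cdot\tau = \xi_i^{\mathrm{net}}$ built into $\mathcal{AS}_\varepsilon(\mu_\varepsilon,A_\varepsilon)$. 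On the buffer $\bigcup_x B_{\varepsilon^\alpha}(x) \setminus \bigcup_i D_i^\varepsilon$, close the construction by a curl-free Helmholtz-type extension matching $\beta_\varepsilon$ on the outer boundary, so that \ref{item: lowerbound 13} and \ref{item: lowerbound14} hold immediately. For \ref{item: lowerbound1last}, I would test the $D_i^\varepsilon$-minimiser against the explicit competitor obtained by adding to $\beta_\varepsilon$ a curl-balancing correction that moves the original cores to $c_i$, and use Proposition~\ref{prop: convpsi} together with the counting estimates of the previous paragraph to bound the accumulated excess by $C(|\mu_\varepsilon|(A_\varepsilon) + |\log\varepsilon|\cdot\int \tfrac{1}{2}\mathcal{C}\beta_\varepsilon:\beta_\varepsilon)$; after normalisation by $|\log\varepsilon|^2$ this is precisely $F_\varepsilon + CF_\varepsilon/|\log\varepsilon|$.

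The main obstacle is the lower bound underlying the counting step. Between two consecutive mergings an annulus of ratio arbitrarily close to $1$ can appear, on which Korn's constant degenerates and Lemma~\ref{lemma: boundcirc} becomes useless. In the subcritical regime \cite{dLGaPo12} a combinatorial argument bounds the number of mergings uniformly and these bad annuli are negligible; with up to $|\log\varepsilon|^{2-\delta}$ dislocations in the critical regime, mergings proliferate and this shortcut fails. The technical heart of the proof must therefore be a careful extraction of ``well-proportioned'' free-expansion sub-phases of ratio exactly $c$---discarding at most one thin sub-annulus per merging---combined with local modifications of $\beta_\varepsilon$ in regions where the ball construction would otherwise stall, so that the effective circulation entering Lemma~\ref{lemma: boundcirc} is preserved only on the clean annuli. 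It is precisely this interplay between the combinatorial pruning and the local surgery on $\beta_\varepsilon$ that makes the argument go through in the absence of well-separateness.
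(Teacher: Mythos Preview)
Your overall architecture---ball construction, then local replacement of $\beta_\varepsilon$---is right, but the counting step contains a genuine gap that your last paragraph acknowledges without actually closing.

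The assertion ``since each $D_i^\varepsilon$ has grown from the $\varepsilon$-scale to the $\varepsilon^\alpha$-scale, $\tau_i\log c \gtrsim (1-\alpha)|\log\varepsilon|$'' is false. The total \emph{time} of the construction is of order $(1-\alpha)|\log\varepsilon|/\log c$, but $\tau_i$ counts only the unit intervals free of mergings in the ancestry of $D_i^\varepsilon$. With up to $|\log\varepsilon|^{2-\delta}$ starting balls (from the core penalisation bound $|\mu_\varepsilon|(A_\varepsilon)\leq K|\log\varepsilon|^{2-\delta}$), a single final ball can absorb a neighbour at almost every unit step, making $\tau_i$ arbitrarily small. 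On such a ball your lower bound $\sum_i \tau_i|\xi_i^{\mathrm{net}}|^2$ is useless, and your suggested fix ``discard at most one thin sub-annulus per merging'' does not help when the number of mergings is itself comparable to $|\log\varepsilon|$.

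The paper's argument is a dichotomy you do not articulate. At the intermediate scale $\varepsilon^{\alpha+2\sigma}$ one splits the balls into those containing \emph{many} ($>\tfrac12 s_1^\varepsilon\sim|\log\varepsilon|$) starting balls and those containing \emph{few}. The many-merging balls are counted directly by pigeonhole: there are at most $N_\varepsilon/(\tfrac12 s_1^\varepsilon)\lesssim|\log\varepsilon|^{1-\delta}$ of them, with no energy argument at all. The few-merging balls have $\gtrsim|\log\varepsilon|$ clean unit phases, and only \emph{there} does Lemma~\ref{lemma: boundcirc} yield the bound on $\#\{i:\xi_i^{\mathrm{net}}\neq 0\}$ and on $\sum_i|\xi_i^{\mathrm{net}}|$.

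There is a second gap: your energy lower bound sees only balls with $\xi_i^{\mathrm{net}}\neq 0$, so it cannot by itself bound $|I_\varepsilon|$. Balls with zero net Burgers vector may be numerous and must be \emph{eliminated}, not counted. The paper runs a second ball construction from the output of the first and applies the same few/many dichotomy again: few-merging balls all of whose parents have zero mass are wiped out by replacing $\beta_\varepsilon$ with a local gradient (dipole removal via Korn on a good annulus plus Sobolev extension), while many-merging balls are again counted by pigeonhole.

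Finally, your modification step (energy minimiser with prescribed trace, plus a vague Helmholtz extension on a buffer) does not obviously give the sharp excess in \ref{item: lowerbound1last}. The paper's construction is explicit: a third ball construction with a carefully tuned expansion factor $c_1$ ensures enough clean phases; a mean-value argument selects one annulus carrying at most a $1/|\log\varepsilon|$ fraction of the energy; on the inner ball one writes $\beta_\varepsilon=K_i+\nabla u_i$ with $K_i(x)=\tfrac{1}{2\pi}\xi_i\otimes J(x-x_i)/|x-x_i|^2$, applies Korn to $\nabla u_i$ on the good annulus, and extends the resulting $H^1$ function inward. This is what produces the $C F_\varepsilon/|\log\varepsilon|$ term and makes $\operatorname{curl}\bar\beta_\varepsilon$ an honest measure supported on the boundary circles.
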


\begin{proof}
Let $\sigma=\frac{1 - \alpha}3$ and fix $c>1$. Let $\varepsilon >0$.\\ 
The prove is subdivided in three steps. 
It is based on a ball construction starting with the balls of radius $\varepsilon$ around the dislocation points.
First, we estimate the number of balls, whose $\mu_{\varepsilon}$-measure is non-zero, at some time in the ball construction that corresponds to balls with an intermediate radius $\varepsilon^{\alpha + 2 \sigma}$.
Secondly, at a later point in the construction we bound the number of balls whose accumulated Burgers vector is zero by deleting dipoles without creating too much energy nor changing the strains on a large set, see \ref{item: lowerbound1last} and \ref{lowerbound: lowerbound12ab}.
Combining the estimates leads to \ref{item: lowerbound12}.
In a third step, we modify the strains slightly in order to obtain a strain with a $\operatorname{curl}$ that is still related to $\mu_{\varepsilon}$ but whose total variation is bounded in terms of $|\log \varepsilon|$, see \ref{item: lowerbound14} and \ref{item: lowerbound15}.
\newline \\  
\textbf{Step 1.} \emph{Estimation of number of balls such that $\mu_{\varepsilon}(B) \neq 0$.} \\
Let $B_i^{\varepsilon} = B_{\varepsilon}(x_i)$ where $\operatorname{supp }\mu_{\varepsilon} = \{x_1, \dots, x_{N_{\varepsilon}}\}$.
As the elements in $\mathbb{S}$ are bounded away from zero, we may deduce from the assumed energy bound that $N_{\varepsilon} \leq k |\mu_{\varepsilon}| \leq k K |\log \varepsilon|^{2-\delta}$. \\
Now, perform a continuous ball construction starting with the balls $(B_i^{\varepsilon})_{i=1,\dots,N_{\varepsilon}}$ and denote its output by $(I_{\varepsilon}(t), (B_i^{\varepsilon}(t))_t, (R_i^{\varepsilon}(t))_t)_t$.
In this first step, we consider only times $t>0$ such that $\sum_{i \in I_{\varepsilon}(t)} R^{\varepsilon}_i(t) \leq \varepsilon^{\alpha + 2 \sigma}$.
Using  Lemma \ref{lemma: ball construction}, we can compute a lower bound on $t^{\varepsilon}_1 > 0$ which we define to be the first time $t$ such that $\sum_{i \in I_{\varepsilon}(t)} R^{\varepsilon}_i(t) = \varepsilon^{\alpha + 2 \sigma}$:
\begin{equation*}
\varepsilon^{\alpha + 2 \sigma} = \sum_{i \in I_{\varepsilon}(t_1^{\varepsilon})} R^{\varepsilon}_i(t) \leq c^{t_1^{\varepsilon}} \sum_{i \in I_{\varepsilon}(t_1^{\varepsilon})} \varepsilon \, \#\{ B_j^{\varepsilon}: B_j^{\varepsilon} \subseteq B_i(t_1^{\varepsilon}) \} \leq kK \, c^{t^{\varepsilon}_1} \varepsilon |\log \varepsilon|^2.
\end{equation*}
From this estimate one derives directly 
\begin{equation*}
t^{\varepsilon}_1 \geq \sigma \frac{|\log \varepsilon|}{\log c} - \frac{\log (k K) -2 \log |\log \varepsilon|}{\log c}.
\end{equation*}
In particular, for $\varepsilon > 0$ small enough (depending on $K$ and $\sigma$) we obtain that $t^{\varepsilon}_1 \geq \frac{\sigma}2 \frac{|\log \varepsilon|}{\log c} + 1$. 
Let us consider the balls $(B^{\varepsilon}_i(s^{\varepsilon}_1))_{i \in I(s^{\varepsilon}_1)}$ of the the ball construction at time $s^{\varepsilon}_1 = \lceil \frac{\sigma}2 \frac{|\log \varepsilon|}{\log c} \rceil$. 
Note that all balls in $(B^{\varepsilon}_i(s^{\varepsilon}_1))_{i \in I(s^{\varepsilon}_1)}$ have a radius which is smaller than $\varepsilon^{\alpha + 2\sigma}$. \\
We subdivide the family of balls  $(B^{\varepsilon}_i(s^{\varepsilon}_1))_{i \in I_{\varepsilon}(s^{\varepsilon}_1)}$ into the subset of balls that evolve from few mergings and those that originate from many mergings:
\begin{align*}
\mathcal{F}_{\varepsilon}(s^{\varepsilon}_1) = \bigg\{ B^{\varepsilon}_i(s^{\varepsilon}_1): \# P_i^{s_1^{\varepsilon}}(0) \leq \frac{s_1^{\varepsilon}}{2} \bigg\} \text{ and }
\mathcal{M}_{\varepsilon}(s^{\varepsilon}_1)  = \bigg\{ B^{\varepsilon}_i(s^{\varepsilon}_1): \# P_i^{s_1^{\varepsilon}}(0) > \frac{s_1^{\varepsilon}}{2} \bigg\}.
\end{align*}
Recall that by definition the set $P_i^{s_1^{\varepsilon}}(0)$ contains the balls at time zero which are included in the ball $B_i^{\varepsilon}(s_1^{\varepsilon})$, see \eqref{eq: defpi}. \\
Let us first estimate the number of balls in $\mathcal{M}_{\varepsilon}(s^{\varepsilon}_1)$. 
By definition, every ball $B^{\varepsilon}_i(s^{\varepsilon}_1) \in \mathcal{M}_{\varepsilon}(s^{\varepsilon}_1)$ originates from at least $\frac{s_1^{\varepsilon}}2$ starting balls. 
Consequently, 
\begin{equation}
\# \mathcal{M}_{\varepsilon}(s^{\varepsilon}_1) \leq 2 \frac{N_{\varepsilon}}{s^{\varepsilon}_1} \leq 2 k K \frac{|\log \varepsilon|^{2-\delta}}{\frac{\sigma}2 \frac{|\log \varepsilon|}{\log c}} = \frac{4 k K}{\sigma} \log(c) \, |\log \varepsilon|^{1-\delta}. \label{eq: estM}
\end{equation}
The next objective is to estimate the number of balls in $\mathcal{F}_{\varepsilon}(s^{\varepsilon}_1)$ which have an accumulated Burgers vector which is non-zero.\\
Fix $B^{\varepsilon}_i(s^{\varepsilon}_1) \in \mathcal{F}_{\varepsilon}(s^{\varepsilon}_1)$.
By definition of $\mathcal{F}_{\varepsilon}(s_1^{\varepsilon})$, the ball $B_i^{\varepsilon}(s_1^{\varepsilon})$ includes at most $\frac{s_1^{\varepsilon}}2$ starting balls.
Hence, it evolves from at most $\frac{s_1^{\varepsilon}}2$ mergings.
By the pigeonhole principle, there exist natural numbers $0\leq n_1 < \dots < n_{L_i} \leq s_1^{\varepsilon}-1$ such that for all $k= 1 ,\dots, L_i \geq \lfloor \frac{s_1^{\varepsilon}}2 \rfloor$ every ball $B^{\varepsilon}_j(n_k) \in P_i^{s^{\varepsilon}_1}(n_k)$ purely expands in the time interval $(n_k,n_k +1]$.
Recall that by the definition of the ball construction this means that $B^{\varepsilon}_j(n_k +1)$ has the same center as $B^{\varepsilon}_j(n_k)$ and the radius $R^{\varepsilon}_j(n_k +1) = c R^{\varepsilon}_j(n_k)$.
Moreover, we know that $\operatorname{curl } \beta_{\varepsilon} = 0$ in $B_j(n_k + 1) \setminus B_j(n_k)$ (remember that $\operatorname{supp }\mu_{\varepsilon} \subseteq \bigcup_{i \in I_{\varepsilon}} B^{\varepsilon}_i \subseteq \bigcup_{i \in I_{\varepsilon}(t)} B^{\varepsilon}_i(t)$ for all $t>0$).
Hence, we can apply Lemma \ref{lemma: boundcirc} to $B^{\varepsilon}_j(n_k +1) \setminus B^{\varepsilon}_j(n_k)$ to obtain, by summing over all these disjoint annuli,
\begin{align}
\int_{B^{\varepsilon}_i(s^{\varepsilon}_1)} \mathcal{C} \beta_{\varepsilon} : \beta_{\varepsilon} \, dx  \geq &\sum_{k=1}^{L_i} \, \sum_{B^{\varepsilon}_j(n_k) \in P_i^{s^{\varepsilon}_1}(n_k)} \int_{B^{\varepsilon}_j(n_k + 1) \setminus B^{\varepsilon}_j(n_k)} \mathcal{C} \beta_{\varepsilon} : \beta_{\varepsilon} \,dx \label{eq: lowerboundstep11} \\
\geq &\frac{1}{2 \pi K(c)}\log(c)\sum_{k=1}^{L_i} \sum_{B^{\varepsilon}_j(n_k) \in P_i^{s^{\varepsilon}_1}(n_k)} |\mu_{\varepsilon}(B^{\varepsilon}_j(n_k))|^2. \label{eq: adas}\\
\intertext{As $\mu_{\varepsilon}(B^{\varepsilon}_j(n_k)) \in \mathbb{S}$ and the non-zero elements in $\mathbb{S}$ are bounded away from zero, we can further estimate}
\eqref{eq: adas} \geq &\frac{k}{2 \pi K(c)} \log(c)\sum_{k=1}^{L_i} \sum_{B^{\varepsilon}_j(n_k) \in P_i^{s^{\varepsilon}_1}(n_k)} |\mu_{\varepsilon}(B^{\varepsilon}_j(n_k))| \nonumber  \\
 \geq &\frac{k}{2 \pi K(c)}\log(c) \sum_{k=1}^{L_i} |\mu_{\varepsilon}(B^{\varepsilon}_i(s^{\varepsilon}_1))| \nonumber \\
\geq &\frac{k }{2 \pi K(c)}\log(c) \,  L_i \, |\mu_{\varepsilon}(B^{\varepsilon}_i(s^{\varepsilon}_1))| \nonumber  \\ 
\geq &\frac{k}{2 \pi K(c)} \log(c) \, \left\lfloor \frac{s_1^{\varepsilon}}2 \right\rfloor  |\mu_{\varepsilon}(B^{\varepsilon}_i(s^{\varepsilon}_1))| \nonumber \\
\geq &\frac{\sigma \, k}{16 \pi K(c)} |\log \varepsilon| \,|\mu_{\varepsilon}(B^{\varepsilon}_i(s^{\varepsilon}_1))|. \label{eq: lowerboundstep1last}
\end{align}
For the second inequality, we used that $\mu_{\varepsilon}(B_i^{\varepsilon}(s_1^{\varepsilon})) = \sum_{B^{\varepsilon}_j(n_k) \in P_i^{s^{\varepsilon}_1}(n_k)} \mu_{\varepsilon}(B^{\varepsilon}_j(n_k))$; 
for the last inequality we used that for $\varepsilon$ small enough it holds that $\left\lfloor \frac{s_1^{\varepsilon}}2 \right\rfloor \geq  \frac{\sigma |\log \varepsilon|}{8 \log c}$. \\
By summing over all $B^{\varepsilon}_i(s^{\varepsilon}_1) \in \mathcal{F}_{\varepsilon}(s^{\varepsilon}_1)$, we deduce from the energy bound on $F_{\varepsilon}(\mu_{\varepsilon},\beta_{\varepsilon},A_{\varepsilon})$ that
\begin{equation*}
\sum_{B^{\varepsilon}_i(s^{\varepsilon}_1) \in \mathcal{F}_{\varepsilon}(s^{\varepsilon}_1)} |\mu_{\varepsilon}(B^{\varepsilon}_i(s^{\varepsilon}_1))| \leq \frac{\sigma \, k}{16 \pi K(c)} K |\log \varepsilon|^{1 - \delta}.
\end{equation*}
In particular, we obtain the bound (recall that the non-zero elements of $\mathbb{S}$ are bounded away from zero) 
\begin{equation}
\#\{B^{\varepsilon}_i(s^{\varepsilon}_1) \in \mathcal{F}_{\varepsilon}(s^{\varepsilon}_1): \mu_{\varepsilon}(B^{\varepsilon}_i(s^{\varepsilon}_1)) \neq 0 \} \leq C(\alpha,K,c) |\log \varepsilon|^{1 - \delta}. \label{eq: estF}
\end{equation}
Combining the bounds \eqref{eq: estM} and \eqref{eq: estF} provides the estimate
\begin{equation*}
\# \{ B^{\varepsilon}_i(s^{\varepsilon}_1): i \in I_{\varepsilon}(s^{\varepsilon}_1) \text{ and } \mu_{\varepsilon}(B^{\varepsilon}_i(s^{\varepsilon}_1)) \neq 0 \} \leq \tilde{C}(\alpha,K,c) |\log \varepsilon|^{1-\delta}.
\end{equation*}
\newline \\
\textbf{Step 2. } \emph{Reduction of number of balls such that $\mu_{\varepsilon}(B) = 0$.} \\
In this step, we reduce the number of balls such that $\mu_{\varepsilon}(B)=0$ by further growing the balls from step 1 and replacing $\beta_{\varepsilon}$ by local gradients on balls with $\mu_{\varepsilon}$-mass $0$. \\
Let $\tilde{I}_{\varepsilon} = I_{\varepsilon}(s^{\varepsilon}_1)$ and $\tilde{B}^{\varepsilon}_i = B^{\varepsilon}_i(s^{\varepsilon}_1)$ for all $i \in \tilde{I}_{\varepsilon}$ where $I_{\varepsilon}(s^{\varepsilon}_1)$ and $B^{\varepsilon}_i(s^{\varepsilon}_1)$ are from step 1.
Consider a new ball construction associated to $c$ starting with the balls $(\tilde{B}^{\varepsilon}_i)_{i \in \tilde{I}_{\varepsilon}}$. 
With a little abuse of notation we call the output of this ball construction again $(I_{\varepsilon}(t), (B_i^{\varepsilon}(t))_{i \in I_{\varepsilon}(t)}, (R_i^{\varepsilon})_{i \in I_{\varepsilon}(t)})_t$. \\
As the starting balls have --- by construction in step 1 --- a radius less than $\varepsilon^{\alpha + 2 \sigma}$, we can argue as in step 1 to obtain that for $\varepsilon >0$ small enough the inequality $\sum_{i \in I_{\varepsilon}(t)} R^{\varepsilon}_i(t) \leq \varepsilon^{\alpha + \sigma}$ holds true for all $t \leq \lceil \frac{\sigma}2 \frac{|\log \varepsilon|}{\log c} \rceil =: s^{\varepsilon}_2$. \\
We define the following partition of the set $\{B^{\varepsilon}_i(s^{\varepsilon}_2): i \in I_{\varepsilon}(s^{\varepsilon}_2)\}$ (see Figure \ref{fig: Aeps}):
\begin{align*}
A^{\varepsilon}_1(s^{\varepsilon}_2) = \bigg\{&B^{\varepsilon}_i(s^{\varepsilon}_2): \text{ there exists a ball } B^{\varepsilon}_j(0) \in P_i^{s_2^{\varepsilon}}(0) \text{ such that } \mu_{\varepsilon}(B^{\varepsilon}_j(0)) \neq 0  \bigg\},  \\
A^{\varepsilon}_2(s^{\varepsilon}_2) = \bigg\{&B^{\varepsilon}_i(s^{\varepsilon}_2): \text{for all } B^{\varepsilon}_j(0) \in P_i^{s_2^{\varepsilon}}(0) \text{ it holds } \mu_{\varepsilon}(B^{\varepsilon}_j(0)) = 0 \text{ and } \# P_i^{s_2^{\varepsilon}}(0) > \frac{s_2^{\varepsilon}}2 \bigg\},  \\
A^{\varepsilon}_3(s^{\varepsilon}_2) = \bigg\{&B^{\varepsilon}_i(s^{\varepsilon}_2): \text{for all } B^{\varepsilon}_j(0) \in P_i^{s_2^{\varepsilon}}(0) \text{ it holds } \mu_{\varepsilon}(B^{\varepsilon}_j(0)) = 0 \text{ and } \# P_i^{s_2^{\varepsilon}}(0) \leq \frac{s_2^{\varepsilon}}2 \bigg\}.
\end{align*}

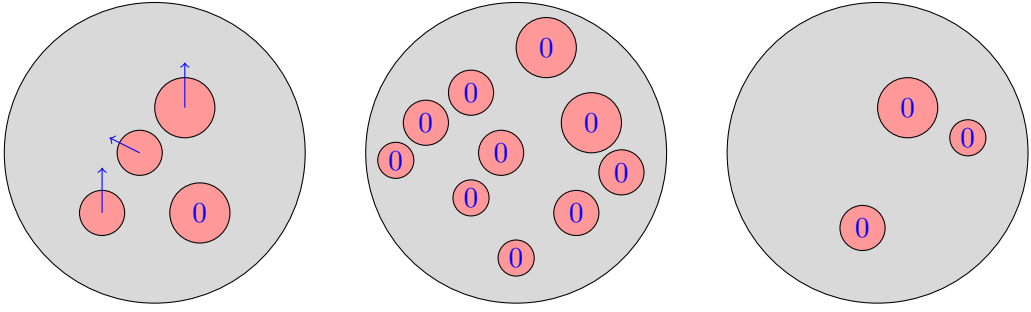
\begin{figure}[t]
\centering
\begin{subfigure}{0.3\textwidth}
\begin{tikzpicture}[scale=2]
\fill[gray!30!white] (0,0) circle (1cm);
\draw (0,0) circle (1cm);
\fill[red!40!white] (0.2,0.3) circle (0.2cm);
\draw (0.2,0.3) circle (0.2cm);
\draw[blue,->] (0.2,0.3) -- (0.2,0.6);

\fill[red!40!white] (-0.1,0) circle (0.15cm);
\draw (-0.1,0) circle (0.15cm);
\draw[blue,->] (-0.1,0) -- (-0.3,0.1);

\fill[red!40!white] (0.3,-0.4) circle (0.2cm);
\draw (0.3,-0.4) circle (0.2cm);
\draw[blue] (0.3,-0.4) node {$0$};

\fill[red!40!white] (-0.35,-0.4) circle (0.15cm);
\draw (-0.35,-0.4) circle (0.15cm);
\draw[blue,->] (-0.35,-0.4) -- (-0.35,-0.1);
\end{tikzpicture}
\end{subfigure}
\begin{subfigure}{0.3\textwidth}
\begin{tikzpicture}[scale=2]
\fill[gray!30!white] (0,0) circle (1cm);
\draw (0,0) circle (1cm);

\fill[red!40!white] (0.2,0.7) circle (0.2cm);
\draw (0.2,0.7) circle (0.2cm);
\draw[blue]  (0.2,0.7) node {$0$};

\fill[red!40!white] (-0.1,0) circle (0.15cm);
\draw (-0.1,0) circle (0.15cm);
\draw[blue] (-0.1,0) node {$0$};

\fill[red!40!white] (0.5,0.2) circle (0.2cm);
\draw (0.5,0.2) circle (0.2cm);
\draw[blue] (0.5,0.2) node {$0$};

\fill[red!40!white] (0.7,-0.13) circle (0.15cm);
\draw (0.7,-0.13) circle (0.15cm);
\draw[blue] (0.7,-0.13) node {$0$};

\fill[red!40!white] (0.4,-0.4) circle (0.15cm);
\draw (0.4,-0.4) circle (0.15cm);
\draw[blue] (0.4,-0.4) node {$0$};

\fill[red!40!white] (0,-0.7) circle (0.12cm);
\draw (0,-0.7) circle (0.12cm);
\draw[blue] (0,-0.7) node {$0$};

\fill[red!40!white] (-0.3,-0.3) circle (0.12cm);
\draw (-0.3,-0.3) circle (0.12cm);
\draw[blue] (-0.3,-0.3) node {$0$};

\fill[red!40!white] (-0.8,-0.05) circle (0.12cm);
\draw (-0.8,-0.05) circle (0.12cm);
\draw[blue] (-0.8,-0.05) node {$0$};

\fill[red!40!white] (-0.6,0.2) circle (0.15cm);
\draw (-0.6,0.2) circle (0.15cm);
\draw[blue] (-0.6,0.2) node {$0$};

\fill[red!40!white] (-0.3,0.4) circle (0.15cm);
\draw (-0.3,0.4) circle (0.15cm);
\draw[blue] (-0.3,0.4) node {$0$};
\end{tikzpicture}
\end{subfigure}
\begin{subfigure}{0.3\textwidth}
\begin{tikzpicture}[scale=2]
\fill[gray!30!white] (0,0) circle (1cm);
\draw (0,0) circle (1cm);

\fill[red!40!white] (0.2,0.3) circle (0.2cm);
\draw (0.2,0.3) circle (0.2cm);
\draw[blue] (0.2,0.3) node {$0$};

\fill[red!40!white] (-0.1,-0.5) circle (0.15cm);
\draw (-0.1,-0.5) circle (0.15cm);
\draw[blue] (-0.1,-0.5) node {$0$};

\fill[red!40!white] (0.6,0.1) circle (0.12cm);
\draw (0.6,0.1) circle (0.12cm);
\draw[blue] (0.6,0.1) node {$0$};
\end{tikzpicture}
\end{subfigure}
\caption{Prototypical examples of balls in the sets $A^{\varepsilon}_1(s^{\varepsilon}_2)$ (left), $A^{\varepsilon}_2(s^{\varepsilon}_2)$ (middle) and $A^{\varepsilon}_3(s^{\varepsilon}_2)$ (right). The balls $B_i^{\varepsilon}(s_2^{\varepsilon})$ are drawn in gray, the corresponding balls in $P_i^{s_2^{\varepsilon}}(0)$ are drawn in red, and their (accumulated) Burgers vectors are drawn in blue, a blue $0$ indicates that the $\mu_{\varepsilon}$-mass of this ball is $0$ .}
\label{fig: Aeps}
\end{figure}
Let us make clear that the sets $P_i^{s_2^{\varepsilon}}(0)$ are meant with respect to the ball construction introduced in the beginning of step 2. \\
Note that a ball can only be in $A^{\varepsilon}_1(s^{\varepsilon}_2)$ if it includes one of the balls from step 1 with non-zero $\mu_{\varepsilon}$-mass. 
The number of these balls was controlled in step 1.
As the balls in $A^{\varepsilon}_1(s^{\varepsilon}_2)$ are by definition of the ball construction disjoint, it follows $\#A^{\varepsilon}_1(s^{\varepsilon}_2) \leq C(\alpha,K,c) |\log \varepsilon|^{1-\delta}$.
In addition, we can argue as in step 1 for the set $\mathcal{M}_{\varepsilon}(s_1^{\varepsilon})$ to obtain that $\# A^{\varepsilon}_2(s^{\varepsilon}_2) \leq C(\alpha,K,c) |\log \varepsilon|^{1-\delta}$. \\
We cannot control the number of balls in $A^{\varepsilon}_3(s^{\varepsilon}_2)$. 
Instead, we will construct a new strain with only slightly more elastic energy and no singularities inside the balls of $A^{\varepsilon}_3(s^{\varepsilon}_2)$ by replacing $\beta_{\varepsilon}$ by local gradients inside these balls.
A similar construction has already been used in \cite{dLGaPo12}(also to delete dipoles) and \cite{MuScZe14}(to extend strains into the cores). \\
Let us pick a ball $B^{\varepsilon}_i(s^{\varepsilon}_2) \in A^{\varepsilon}_3(s^{\varepsilon}_2), i \in I_{\varepsilon}(s^{\varepsilon}_2)$. 
By definition of the set $A^{\varepsilon}_3(s^{\varepsilon}_2)$, there exist natural numbers $0\leq n_1 < \dots < n_{L_i} \leq s^{\varepsilon}_2 - 1$, where $L_i \geq \lfloor \frac{s_2^{\varepsilon}}2 \rfloor$, such that for every $k=1,\dots,L_i$ every $B^{\varepsilon}_j(n_k) \in P^{s^{\varepsilon}_2}_i(n_k)$ does not merge in $(n_k,n_k+1]$.
Note that the annuli $B^{\varepsilon}_j(n_k+1) \setminus B^{\varepsilon}_j(n_k)$, where $B^{\varepsilon}_j(n_k) \in P_i^{s^{\varepsilon}_2}(n_k)$, are pairwise disjoint and contained in $B^{\varepsilon}_i(s^{\varepsilon}_2) \setminus \bigcup_{B \in P_i^{s_2^{\varepsilon}}(0) }B$.
Consequently, it holds
\begin{equation*}
\sum_{k=1}^{L_i} \sum_{B^{\varepsilon}_j(n_k) \in P_i^{s^{\varepsilon}_2}(n_k)} \int_{B^{\varepsilon}_j(n_k +1) \setminus B^{\varepsilon}_j(n_k)} \mathcal{C} \beta_{\varepsilon} : \beta_{\varepsilon} \, dx \leq \int_{B_i(s^{\varepsilon}_2) \setminus \bigcup_{B \in P_i^{s_2^{\varepsilon}}(0) }B} \mathcal{C} \beta_{\varepsilon} : \beta_{\varepsilon} \, dx
\end{equation*}
By the mean value theorem, we may choose $k_i \in \mathbb{N}$ such that 
\begin{align*}
\sum_{B^{\varepsilon}_j(n_{k_i}) \in P_i^{s^{\varepsilon}_2}(n_{k_i})} \int_{B^{\varepsilon}_j(n_{k_i} +1) \setminus B^{\varepsilon}_j(n_{k_i})} \mathcal{C} \beta_{\varepsilon} : \beta_{\varepsilon} \, dx &\leq \frac{1}{L_i} |\log \varepsilon|^2 \, F_{\varepsilon}(\mu_{\varepsilon},\beta_{\varepsilon},B^{\varepsilon}_i(s^{\varepsilon}_2)) \, dx \\
& \leq \frac4{s^{\varepsilon}_2} |\log \varepsilon|^2 \, F_{\varepsilon}(\mu_{\varepsilon},\beta_{\varepsilon},B^{\varepsilon}_i(s^{\varepsilon}_2)),
\end{align*}
where the last inequality holds for $\varepsilon > 0$ small enough. \\
Now, fix $B^{\varepsilon}_j(n_{k_i})  \in P_i^{s^{\varepsilon}_2}(n_{k_i})$. 
By construction, we have $\operatorname{curl }\beta_{\varepsilon} = 0$ in $B^{\varepsilon}_j(n_{k_i}+1) \setminus B^{\varepsilon}_j(n_{k_i}) =: C_{i,j}^{\varepsilon}$.
Moreover, notice that by definition of $A^{\varepsilon}_3(s_2)$ it holds that $\mu_{\varepsilon}(B^{\varepsilon}_j(n_{k_i})) = 0$ (as the ball $B^{\varepsilon}_j(n_{k_i})$ evolves from balls with this property) and therefore 
\begin{equation*}
\int_{\partial B^{\varepsilon}_j(n_{k_i})} \beta_{\varepsilon} \cdot \tau \,d\mathcal{H}^{1} = 0,
\end{equation*}
where $\tau$ denotes the unit tangent to $\partial B^{\varepsilon}_j(n_{k_i})$. 
By standard theory there exists $u_{i,j}^{\varepsilon} \in H^1(C_{i,j}^{\varepsilon};\mathbb{R}^2)$ such that $\beta_{\varepsilon} = \nabla u_{i,j}^{\varepsilon}$ on $C_{i,j}^{\varepsilon}$.
Korn's inequality for the annulus applied to $C_{i,j}^{\varepsilon}$ guarantees the existence of a skew-symmetric matrix $W_{i,j}^{\varepsilon} \in Skew(2)$ such that
\begin{equation*}
\int_{C_{i,j}^{\varepsilon}} |\nabla u_{i,j}^{\varepsilon} - W_{i,j}^{\varepsilon}|^2 \leq K(c) \int_{C_{i,j}^{\varepsilon}} |(\nabla u_{i,j}^{\varepsilon})_{sym}|^2 \,dx.
\end{equation*}
Note that Korn's constant on the right hand side depends only on the ratio of the radii of the annuli $C_{i,j}^{\varepsilon}$ which equals $c$ by construction.
In particular, this constant is independent from $\varepsilon$.  \\
By standard extension results for Sobolev functions there exists a function $v_{i,j}^{\varepsilon} \in H^1(B^{\varepsilon}_j(n_{k_i}+1);\mathbb{R}^2)$ such that $\nabla v_{i,j}^{\varepsilon} = \nabla u_{i,j}^{\varepsilon} - W_{i,j}^{\varepsilon}$ on $C_{i,j}^{\varepsilon}$ and 
\begin{equation*}
\int_{B^{\varepsilon}_j(n_{k_i}+1)} |\nabla v^{\varepsilon}_{i,j}|^2 \, dx \leq C(c) \int_{C_{i,j}^{\varepsilon}} |\nabla u^{\varepsilon}_{i,j} - W^{\varepsilon}_{i,j}|^2 \,dx.
\end{equation*}
Note that by scaling the constant for the extension depends again only on the ratio of the annulus $C_{i,j}^{\varepsilon}$. \\
Now, we can estimate the elastic energy of $\nabla v_{i,j}^{\varepsilon}$ on $B^{\varepsilon}_i(s_2^{\varepsilon})$ by combining the previous two estimates and summing over all balls in $P^{s^{\varepsilon}_2}_i(n_{k_i})$:
\begin{align}
\sum_{B^{\varepsilon}_j(n_{k_i}) \in P^{s^{\varepsilon}_2}_i(n_{k_i})}\int_{B^{\varepsilon}_j(n_{k_i})} \mathcal{C} \nabla v_{i,j}^{\varepsilon} : \nabla v_{i,j}^{\varepsilon} \,dx &\leq C \sum_{B^{\varepsilon}_j(n_{k_i}) \in P^{s^{\varepsilon}_2}_i(n_{k_i})} \int_{B^{\varepsilon}_j(n_{k_i})} |\nabla v_{i,j}^{\varepsilon}i|^2 \,dx  \label{eq: estv1} \\
&\leq C(c) \sum_{B^{\varepsilon}_j(n_{k_i}) \in P^{s^{\varepsilon}_2}_i(n_{k_i})} \int_{C_{i,j}^{\varepsilon}} |\nabla u_{i,j}^{\varepsilon} - W_{i,j}^{\varepsilon}|^2 \, dx \nonumber \\
&\leq C(c) \sum_{B^{\varepsilon}_j(n_{k_i}) \in P^{s^{\varepsilon}_2}_i(n_{k_i})} \int_{C_{i,j}^{\varepsilon}} |(\nabla u_{i,j}^{\varepsilon})_{sym}|^2 \, dx \nonumber \\ 
&\leq C(c) \sum_{B^{\varepsilon}_j(n_{k_i}) \in P^{s^{\varepsilon}_2}_i(n_{k_i})} \int_{C_{i,j}^{\varepsilon}} \mathcal{C} \beta_{\varepsilon} : \beta_{\varepsilon} \,dx \nonumber \\
&\leq C(c) \frac4{s^{\varepsilon}_2} |\log \varepsilon|^2 F_{\varepsilon}(\mu_{\varepsilon},\beta_{\varepsilon},B^{\varepsilon}_i(s^{\varepsilon}_2))  \nonumber \\
&\leq C(c) \frac{8}{\sigma} \frac{\log(c) \, F_{\varepsilon}(\mu_{\varepsilon},\beta_{\varepsilon},B^{\varepsilon}_i(s^{\varepsilon}_2))}{|\log \varepsilon|} |\log \varepsilon|^2, \label{eq: estv2}
\end{align}
where the constant $C(c)$ may change from line to line but depends only on $c$ and global parameters (such as the coercivity constant for $\mathcal{C}$ on symmetric matrices). \\
Let us define the function $\tilde{\beta}_{\varepsilon}: A_{\varepsilon} \rightarrow \mathbb{R}^{2\times2}$ by
\begin{equation*}
\tilde{\beta}_{\varepsilon}(x) = \begin{cases} \nabla v_{i,j}^{\varepsilon}(x) + W_{i,j}^{\varepsilon} &\text{ if } x \in B^{\varepsilon}_j(n_{k_i}) \in P_i^{s^{\varepsilon}_2}(n_{k_i}) \text{ for } B^{\varepsilon}_i(s^{\varepsilon}_2) \in A^{\varepsilon}_3(s^{\varepsilon}_2), \\ \beta_{\varepsilon}(x) &\text{ else.}
\end{cases}
\end{equation*}
Note that on the annuli $C_{i,j}^{\varepsilon}$ it holds $\nabla v_{i,j}^{\varepsilon} + W_{i,j}^{\varepsilon} = \beta_{\varepsilon}$.
Hence, $\tilde{\beta}$ does not create any extra $\operatorname{curl}$ on $\partial B_j^{\varepsilon}(n_{k_i})$. 
Therefore, $\operatorname{curl } \tilde{\beta}_{\varepsilon} = 0$ on $A_{\varepsilon} \setminus \bigcup_{B \in A^{\varepsilon}_1(s^{\varepsilon}_2) \cup A^{\varepsilon}_2(s^{\varepsilon}_2)}B$ where $A^{\varepsilon}_1(s^{\varepsilon}_2) \cup A^{\varepsilon}_2(s^{\varepsilon}_2)$ consists of disjoint balls with a radius less than $\varepsilon^{\alpha+\sigma}$ and $\#(A^{\varepsilon}_1(s^{\varepsilon}_2) \cup A^{\varepsilon}_2(s^{\varepsilon}_2)) \leq C(\alpha,K,c) |\log \varepsilon|^{1-\delta}$.
In particular, the strain $\tilde{\beta}_{\varepsilon}$ satisfies for every open $A \subseteq A_{\varepsilon} \setminus \bigcup_{B \in A^{\varepsilon}_1(s^{\varepsilon}_2) \cup A^{\varepsilon}_2(s^{\varepsilon}_2)} B$ with smooth boundary the circulation condition 
\begin{equation*}
\int_{\partial A} \tilde{\beta}_{\varepsilon} \cdot \tau \,d\mathcal{H}^1 = \tilde{\mu}_{\varepsilon}(A),
\end{equation*}
where $\tau$ denotes the unit tangent to $\partial A$ and $\tilde{\mu}_{\varepsilon}=(\mu_{\varepsilon})_{|\bigcup_{B \in A^{\varepsilon}_1(s^{\varepsilon}_2) \cup A^{\varepsilon}_2(s^{\varepsilon}_2)} B}$. 
Note that $\tilde{\mu}_{\varepsilon}(U) = \mu_{\varepsilon}(U)$ for any connected component $U$ of $A_{\varepsilon}$ as we only deleted connected dipoles.\\
Moreover, in view of \eqref{eq: estv1} -- \eqref{eq: estv2} it holds
\begin{equation} \label{eq: estbtilde}
\frac1{|\log \varepsilon|^2}\int_{A_{\varepsilon} \setminus \left(\bigcup_{B \in A^{\varepsilon}_1(s^{\varepsilon}_2) \cup A^{\varepsilon}_2(s^{\varepsilon}_2)} B\right)} \mathcal{C} \tilde{\beta}_{\varepsilon} : \tilde{\beta}_{\varepsilon} \,dx \leq F_{\varepsilon}(\mu_{\varepsilon},\beta_{\varepsilon},A_{\varepsilon}) + C(c,\alpha) \frac{F_{\varepsilon}(\mu_{\varepsilon},\beta_{\varepsilon},A_{\varepsilon})}{|\log \varepsilon|}.
\end{equation} 
Eventually, note that $\tilde{\beta}_{\varepsilon} = \beta_{\varepsilon}$ outside the balls in $A^{\varepsilon}_3(s^{\varepsilon}_2)$, which are all included in $\bigcup_{x \in \operatorname{supp }(\mu_{\varepsilon})} B_{\varepsilon^{\alpha}}(x)$.
\newline \\
\textbf{Step 3. }\emph{Replacing the circulation condition by a measure-valued $\operatorname{curl}$.} \\
We know from Step 2 that $\#(A^{\varepsilon}_1(s^{\varepsilon}_2) \cup A^{\varepsilon}_2(s^{\varepsilon}_2)) \leq C(\alpha,K,c) |\log \varepsilon|^{1-\delta}$. 
Now, choose $c_1 = c_1(\alpha,K,c) > 1$ such that $\log c_1 = \frac18 \frac{\sigma}{C(\alpha,K,c)}$ where $c>1$ is the universal expanding factor of the ball constructions in step 1 and 2. \\
Consider a ball construction associated to $c_1$ starting with the balls in $A^{\varepsilon}_1(s^{\varepsilon}_2) \cup A^{\varepsilon}_2(s^{\varepsilon}_3)$.
Again, denote its output by $(I_{\varepsilon}(t), (B_i^{\varepsilon}(t))_{i \in I_{\varepsilon}(t)}, (R_i^{\varepsilon})_{i \in I_{\varepsilon}(t)})_t$. \\
From step 2 we know that for every ball $B \in  A^{\varepsilon}_1(s^{\varepsilon}_2) \cup A^{\varepsilon}_2(s^{\varepsilon}_2)$ it holds $\operatorname{diam} B \leq 2 \varepsilon^{\alpha + \sigma}$.
Arguing as in step 1 and 2, we obtain that for $\varepsilon >0$ small enough it holds that for all $t \leq \lceil \frac{\sigma}2 \frac{|\log \varepsilon|}{\log c_1} \rceil =:s^{\varepsilon}_3$ we have $\sum_{i \in I_{\varepsilon}(t)} R_i(t) \leq \varepsilon^{\alpha}$.
During the construction, the number of merging times is definitely bounded by the number of starting balls i.e., less than $C(\alpha,K,c) |\log \varepsilon|^{1-\delta}$.
Hence, there are at least $s^{\varepsilon}_3 - C(\alpha,K,c) |\log \varepsilon|^{1-\delta}$ natural numbers $n \leq s_3^{\varepsilon}-1$ such that there is no merging time in the interval $(n,n+1]$.
By the definition of $c_1$ we observe for $\varepsilon>0$ small enough that
\begin{equation}\label{eq: estseps3}
s^{\varepsilon}_3 - C(\alpha,K,k,c) |\log \varepsilon|^{1-\delta} \geq \frac23 s_3^{\varepsilon}.
\end{equation}
In particular, there exist natural numbers $\frac{s_3^{\varepsilon}}2 \leq n_1 < \dots < n_L \leq s_3^{\varepsilon} - 1$, $L \geq \frac{s_3^{\varepsilon}}7$, such that none of the balls $(B^{\varepsilon}_i(n_k))_{i \in I_{\varepsilon}(n_k)}$ merges in the time interval $(n_k,n_k+1]$.
As in step 2, by the mean value theorem, we can find a natural number $\frac{s_3^{\varepsilon}}2 \leq n_k, 1 \leq k \leq L$, which satisfies in addition
\begin{equation*}
\sum_{i \in I_{\varepsilon}(n_k)} \int_{B^{\varepsilon}_i(n_k + 1) \setminus B^{\varepsilon}_i(n_k)} \mathcal{C} \tilde{\beta}_{\varepsilon} : \tilde{\beta}_{\varepsilon} \, dx \leq \frac7{s^{\varepsilon}_3} \int_{A_{\varepsilon} \setminus \left(\bigcup_{B \in A^{\varepsilon}_1(s_2^{\varepsilon}) \cup A^{\varepsilon}_2(s_2^{\varepsilon})} B\right)} \mathcal{C} \tilde{\beta}_{\varepsilon} : \tilde{\beta}_{\varepsilon} \, dx. 
\end{equation*} 
For $i \in I_{\varepsilon}(n_k)$ we perform the following construction.
Let $\xi_i = \tilde{\mu}_{\varepsilon}(B^{\varepsilon}_i(n_k))$, where $\tilde{\mu}_{\varepsilon}$ is defined as in the end of  step 2, and define the function 
\begin{equation*}
K_i(x) = \frac1{2\pi} \frac{\xi_i \otimes J (x-x^{\varepsilon}_i)}{|x-x^{\varepsilon}_i|^2}.
\end{equation*}
Here, $x^{\varepsilon}_i$ is the center of the ball $B^{\varepsilon}_i(n_k)$ and $J$ is the clockwise rotation by $\frac{\pi}2$.
A straightforward computation shows that $\operatorname{curl }K_i = 0$ on $B^{\varepsilon}_i(n_k + 1) \setminus B^{\varepsilon}_i(n_k) =: C^{\varepsilon}_i(n_k)$  and
\begin{equation*}
\int_{C^{\varepsilon}_i(n_k)} |K_i|^2 \,dx =  |\xi_i|^2 \frac{\log(c_1)}{2\pi} \leq C(c_1) \int_{C^{\varepsilon}_i(n_k)} \mathcal{C} \tilde{\beta}_{\varepsilon} : \tilde{\beta}_{\varepsilon} \, dx. 
\end{equation*}
For the inequality, we used Lemma \ref{lemma: boundcirc}. \\
Moreover, we notice that 
\begin{equation*}
\operatorname{curl }(\tilde{\beta}_{\varepsilon} - K_i) = 0 \text{ in } C^{\varepsilon}_i(n_k) \text{ and } \int_{\partial B^{\varepsilon}_i(n_k)} (\tilde{\beta}_{\varepsilon} - K_i) \cdot \tau \,dx = 0.
\end{equation*}
Consequently, there exists a function $u^{\varepsilon}_i \in H^1(C^{\varepsilon}_i(n_k);\mathbb{R}^2)$ such that $\nabla u^{\varepsilon}_i = \tilde{\beta}_{\varepsilon} - K_i$ on $C^{\varepsilon}_i(n_k)$.
Similar to step 2, we can apply Korn\rq{}s inequality for the annulus on $C^{\varepsilon}_i(n_k)$ to obtain a skew-symmetric matrix $W^{\varepsilon}_i \in Skew(2)$ such that
\begin{equation*}
\int_{C^{\varepsilon}_i(n_k)} |\nabla u^{\varepsilon}_i - W^{\varepsilon}_i|^2 \, dx \leq C(c_1) \int_{C^{\varepsilon}_i(n_k)} |(\nabla u^{\varepsilon}_i)_{sym}|^2 \, dx.
\end{equation*}
Note again that the constant depends only on the ratio of the annulus $C^{\varepsilon}_i(n_k)$ which is by construction $c_1$. \\
In addition, by classical extension results, there exists a function $v^{\varepsilon}_i \in H^1(B^{\varepsilon}_i(n_k+1);\mathbb{R}^2)$ such that $\nabla v^{\varepsilon}_i = \nabla u_i - W^{\varepsilon}_i$ and 
\begin{equation*}
\int_{B^{\varepsilon}_i(n_k+1)} |\nabla v^{\varepsilon}_i|^2 \leq C(c_1) \int_{C^{\varepsilon}_i(n_k)} |\nabla u^{\varepsilon}_i - W_i^{\varepsilon}|^2 \,dx.
\end{equation*}
By scaling, also the constant on the right hand side of this inequality depends only on $c_1$. \\
Combining the last four estimates and summing over $i \in I_{\varepsilon}(n_k)$ yields the following chain of inequalities
\begin{align}
\sum_{i \in I_{\varepsilon}(n_k)} \int_{B^{\varepsilon}_i(n_k + 1)} \mathcal{C} \nabla v^{\varepsilon}_i : \nabla v^{\varepsilon}_i \,dx &\leq C \sum_{i \in I_{\varepsilon}(n_k)} \int_{B^{\varepsilon}_i(n_k + 1)} |\nabla v^{\varepsilon}_i|^2 \,dx \label{eq: barbeta1}\\
&\leq C(c_1) \sum_{i \in I_{\varepsilon}(n_k)} \int_{C^{\varepsilon}_i(n_k)} |\nabla u^{\varepsilon}_i - W_i|^2 \,dx \nonumber\\
&\leq C(c_1) \sum_{i \in I_{\varepsilon}(n_k)} \int_{C^{\varepsilon}_i(n_k)} |(\nabla u^{\varepsilon}_i)_{sym}|^2 \,dx \nonumber\\
&=  C(c_1) \sum_{i \in I_{\varepsilon}(n_k)} \int_{C^{\varepsilon}_i(n_k)} |(\tilde{\beta}_{\varepsilon} - K_i)_{sym}|^2 \,dx \nonumber\\
&\leq C(c_1) \sum_{i \in I_{\varepsilon}(n_k)} \int_{C^{\varepsilon}_i(n_k)} \mathcal{C} \tilde{\beta}_{\varepsilon} : \tilde{\beta}_{\varepsilon} \,dx + \int_{C^{\varepsilon}_i(n_k)} |K_i|^2 \, dx \nonumber\\
&\leq C(c_1) \sum_{i \in I_{\varepsilon}(n_k)} \int_{C^{\varepsilon}_i(n_k)} \mathcal{C} \tilde{\beta}_{\varepsilon} : \tilde{\beta}_{\varepsilon} \,dx \nonumber \\
&\leq C(c_1) \frac7{s^{\varepsilon}_3} \int_{A_{\varepsilon}\setminus \left(\bigcup_{B \in A^{\varepsilon}_1(s^{\varepsilon}_2) \cup A^{\varepsilon}_2(s^{\varepsilon}_2)}B\right)} \mathcal{C} \tilde{\beta}_{\varepsilon} : \tilde{\beta}_{\varepsilon} \,dx \nonumber \\
&\leq \frac{C(\alpha,c_1) }{|\log \varepsilon|} \int_{A_{\varepsilon}\setminus \left( \bigcup_{B \in A^{\varepsilon}_1(s^{\varepsilon}_2) \cup A^{\varepsilon}_2(s^{\varepsilon}_2)}B\right)} \mathcal{C} \tilde{\beta}_{\varepsilon} : \tilde{\beta}_{\varepsilon} \,dx. \label{eq: barbeta2}
\end{align}
Here, the constant $C(c_1)$ changed from line to line but it depends only on $c_1$ and global parameters.
Now, define the strain $\bar{\beta}_{\varepsilon}: A_{\varepsilon} \rightarrow \mathbb{R}^{2\times2}$ by
\begin{equation*}
\bar{\beta}_{\varepsilon}(x) = \begin{cases} \nabla v^{\varepsilon}_i(x) + W_i &\text{ if } x \in B^{\varepsilon}_i(n_k+1) \text{ for some } i \in I_{\varepsilon}(n_k), \\
\tilde{\beta}_{\varepsilon}(x) &\text{ else}.
\end{cases}
\end{equation*}
Note that as the balls $(B^{\varepsilon}_i(n_k+1))_{i \in I_{\varepsilon}(n_k +1 )}$ are disjoint, $\bar{\beta}_{\varepsilon}$ is well-defined.
Moreover, from \eqref{eq: barbeta1} -- \eqref{eq: barbeta2} and \eqref{eq: estbtilde} in step 2 we derive that
\begin{align*}
\frac1{|\log \varepsilon|^2}\int_{A_{\varepsilon}} \frac12 \mathcal{C} \bar{\beta}_{\varepsilon}:\bar{\beta}_{\varepsilon} \, dx 
&\leq \frac1{|\log \varepsilon|^2} \left(1 + \frac{C(\alpha,c_1)}{|\log \varepsilon|}\right) \int_{A_{\varepsilon}\setminus \bigcup_{B \in A^{\varepsilon}_1(s^{\varepsilon}_2) \cup A^{\varepsilon}_2(s^{\varepsilon}_2)}B} \frac12 \mathcal{C} \tilde{\beta}_{\varepsilon} : \tilde{\beta}_{\varepsilon} \,dx \\
&\leq \left(1 + \frac{C(\alpha,c)}{|\log \varepsilon|}\right)\left(1+\frac{C(\alpha,c_1)}{|\log \varepsilon|}\right) F_{\varepsilon}(\mu_{\varepsilon},\beta_{\varepsilon},A_{\varepsilon}).
\end{align*}
In addition, it holds $\operatorname{curl }\bar{\beta}_{\varepsilon} = \sum_{i \in I_{\varepsilon}(n_k)} (K_i \cdot \tau) \,\mathcal{H}^1_{|\partial B^{\varepsilon}_i(n_k +1)}$ where $\tau$ is the unit tangent to $\partial B^{\varepsilon}_i(n_k +1)$,
\begin{equation*}
|\operatorname{curl }\bar{\beta}_{\varepsilon}| = \sum_{i \in I_{\varepsilon}(n_k)} |K_i| \,\mathcal{H}^1_{|\partial B^{\varepsilon}_i(n_k +1)}, \text{ and } \int_{\partial B^{\varepsilon}_i(n_k +1)} |K_i| \,d\mathcal{H}^1 = |\xi_i| = |\tilde{\mu}_{\varepsilon} (B^{\varepsilon}_i(n_k +1))|.
\end{equation*}
Finally, set $I_{\varepsilon} = I_{\varepsilon}(n_k+1)$ and $(D_i^{\varepsilon})_{i\in I_{\varepsilon}} = (B^{\varepsilon}_i(n_k+1))_{i \in I_{\varepsilon}(n_k+1)}$.
Then \ref{item: lowerbound11}, \ref{item: lowerbound12}, \ref{item: lowerbound 13}, \ref{item: lowerbound14}, and \ref{item: lowerbound1last} are fulfilled. 
As also in the third step we changed the function from step 2 only in $\bigcup_{x \in \operatorname{supp }(\mu_{\varepsilon})} B_{\varepsilon^{\alpha}}(x)$, it follows \eqref{lowerbound: lowerbound12ab}. \\
Hence, it is left to show \ref{item: lowerbound15}.
Recall that $n_k \geq \frac{s^{\varepsilon}_3}{2}$.
By \eqref{eq: estseps3}, there exist at least $\frac{s^{\varepsilon}_3}7$ natural numbers $n$ below $n_k-1$ such that there is no merging time between $n$ and $n+1$.
A similar computation to  \eqref{eq: lowerboundstep11} - \eqref{eq: lowerboundstep1last} in step 1 shows that
\begin{equation*}
|\operatorname{curl} \bar{\beta}_{\varepsilon}|(A_{\varepsilon}) = \sum_{i \in I_{\varepsilon}} |\tilde{\mu}_{\varepsilon}(D_i^{\varepsilon})| \leq C(\alpha,K,c_1) |\log \varepsilon|^{1 - \delta},
\end{equation*}
which is \ref{item: lowerbound15}. \\
Eventually, note that $c_1$ depends only on $\alpha,K$, and $c$ where $c$ is a fixed universal parameter.
\end{proof}

The Proposition above allows us to reduce the complicated situation with at most $|\log \varepsilon|^2$ dislocations to a simpler one.
After applying the previous proposition, there are only $\sim|\log \varepsilon|$ balls in which the $\operatorname{curl}$ of the modified strain is concentrated. 
This will be enough to obtain compactness.
For the $\liminf$-inequality, one needs to compute self-energies of dislocations.
The self-energy density $\psi$ as defined in \eqref{def: selfenergy} is the renormalized limit of energies computed for $\operatorname{curl}$-free functions on annuli whose ratios go to infinity.
As we want to derive the same quantities also in this situation, it is necessary that we are able to find annuli around the dislocation cores with growing ratios in which the strain (respectively the modified strain in the sense of the previous proposition) is $\operatorname{curl}$-free.
The previous proposition for $\delta=0$ guarantees essentially only the existence of annuli with a fixed ratio uniformly in $\varepsilon$. \\ 
The next proposition shows that either most of the dislocations allow growing ratios in a ball construction or the accumulated Burgers vector is small and the previous proposition allows to reduce the situation to less than $|\log \varepsilon|^{1-\delta}$ dislocation balls for $\delta >0$.
The latter case leads in average to growing differences between consecutive merging times in a ball construction which
 is equivalent to growing ratios of the corresponding annuli. 

\begin{proposition}\label{prop: lowerbound2}
Let $1 > \alpha > \gamma > 0, \, K > 0, l > 0$, $c>1$ and $\frac15 > \delta > 0$.
Then there exists $\varepsilon_0 = \varepsilon_0(\alpha,\gamma,\delta,K,l,c) > 0$ such that for all $0 < \varepsilon < \varepsilon_0$ the following holds: \newline \\
Let $A_{\varepsilon} \subseteq \mathbb{R}^2$. 
Let $(B_i^{\varepsilon})_{i \in I_{\varepsilon}}$ be a family of disjoint balls in $A_{\varepsilon}$ such that
\begin{itemize}
\item[$\bullet$] $\operatorname{diam} B_i^{\varepsilon} \leq \varepsilon^{\alpha}$ for all $i \in I_{\varepsilon}$,
\item[$\bullet$] $|I_{\varepsilon}| \leq K |\log \varepsilon|$,
\item[$\bullet$] $\operatorname{dist}(B_i^{\varepsilon},\partial A_{\varepsilon}) \geq l \varepsilon^{\gamma}$.
\end{itemize}
Let $\beta_{\varepsilon}: A_{\varepsilon} \rightarrow \mathbb{R}^{2\times2}$ such that $\tilde{\mu}_{\varepsilon} = \operatorname{curl}(\beta_{\varepsilon}) \in \mathcal{M}(A_{\varepsilon};\R^2)$ and $\operatorname{supp}(\operatorname{curl} \beta_{\varepsilon}) \subseteq \bigcup_{i \in I_{\varepsilon}} B_i^{\varepsilon}$.
Moreover, assume that $\int_{A_{\varepsilon}} \mathcal{C} \beta_{\varepsilon}:\beta_{\varepsilon} \,dx + |\operatorname{curl }\beta_{\varepsilon}|(A_{\varepsilon})^2 \leq K |\log \varepsilon|^2$.
Then at least one of the following options holds true:
\begin{enumerate}
\item $|(\operatorname{curl }\beta_{\varepsilon})(A_{\varepsilon})| \leq |\log \varepsilon|^{1-\delta}$,
\item Consider a ball construction associated to $c$ starting with the balls $(B_i^{\varepsilon})_{i \in I_{\varepsilon}}$ and the time $t_s^{\varepsilon}$ which is defined to be the first time such that a ball in the ball construction intersects $\partial A_{\varepsilon}$. Then there exists a subset $\tilde{I}_{\varepsilon} \subseteq I_{\varepsilon}(t_s^{\varepsilon})$ such that for any $i \in \tilde{I}_{\varepsilon}$ there exist at most $|\log \varepsilon|^{\delta}$-many times $n \in \mathbb{N}$, $n \leq t_s^{\varepsilon}-1$, such that at least one ball in $P_i^{t_s^{\varepsilon}}(n)$ merges in the time interval $(n,n+1]$. Moreover, it holds
\begin{equation}
\left| \sum_{i \in \tilde{I}_{\varepsilon}} \tilde{\mu}_{\varepsilon}(B_i^{\varepsilon}) - \tilde{\mu}_{\varepsilon}(A_{\varepsilon}) \right| \leq \delta |\tilde{\mu}_{\varepsilon}(A_{\varepsilon})|.
\end{equation}
\label{eq: pauli}
\end{enumerate}
\end{proposition}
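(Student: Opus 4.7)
The plan is to case-split on the magnitude of $|\tilde\mu_\varepsilon(A_\varepsilon)|$: if $|\tilde\mu_\varepsilon(A_\varepsilon)| \leq |\log\varepsilon|^{1-\delta}$ then option (1) holds trivially, so I focus on the regime $|\tilde\mu_\varepsilon(A_\varepsilon)| > |\log\varepsilon|^{1-\delta}$ and produce the set $\tilde I_\varepsilon$ required by option (2).

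In this regime, I perform the described ball construction associated to $c$ starting from $(B_i^\varepsilon)_{i \in I_\varepsilon}$ and run it until the first time $t_s^\varepsilon$ at which some ball touches $\partial A_\varepsilon$. For each terminal ball $B_i^\varepsilon(t_s^\varepsilon)$ I let $m_i$ denote the number of times $n \leq t_s^\varepsilon - 1$ at which at least one ball of $P_i^{t_s^\varepsilon}(n)$ merges in $(n,n+1]$. Each such merging event strictly decreases $\#P_i^{t_s^\varepsilon}(\cdot)$ by at least one, so $\#P_i^{t_s^\varepsilon}(0) \geq m_i + 1$, and summing over $i$ yields $\sum_i m_i \leq |I_\varepsilon| \leq K|\log\varepsilon|$. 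I then define $\tilde I_\varepsilon := \{i \in I_\varepsilon(t_s^\varepsilon) : m_i \leq |\log\varepsilon|^\delta\}$, for which the few-mergings condition holds by construction, and Markov's inequality gives $|I_\varepsilon(t_s^\varepsilon) \setminus \tilde I_\varepsilon| \leq K|\log\varepsilon|^{1-\delta}$.

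The main obstacle is the relative error estimate
\begin{equation*}
\Bigl| \sum_{i \notin \tilde I_\varepsilon} \tilde\mu_\varepsilon(B_i^\varepsilon(t_s^\varepsilon)) \Bigr| \leq \delta\, |\tilde\mu_\varepsilon(A_\varepsilon)|,
\end{equation*}
since a naive triangle inequality together with the total-variation bound $|\tilde\mu_\varepsilon|(A_\varepsilon) \leq \sqrt{K}\,|\log\varepsilon|$ is far too weak when $|\tilde\mu_\varepsilon(A_\varepsilon)|$ is only of order $|\log\varepsilon|^{1-\delta}$. To close this gap I exploit the elastic energy. By Lemma \ref{lemma: ball construction} the termination time satisfies $t_s^\varepsilon \geq (\alpha-\gamma)|\log\varepsilon|/\log c - C$, whereas the total number of merging intervals is at most $K|\log\varepsilon|$, so the set $P$ of pure-expansion unit intervals in $[0,t_s^\varepsilon]$ has cardinality of order $|\log\varepsilon|$. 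On each $n \in P$ the annuli $B_i^\varepsilon(n+1)\setminus B_i^\varepsilon(n)$ are pairwise disjoint and curl-free, so Lemma \ref{lemma: boundcirc} provides, after summation in $i$, a lower bound $\int_{A_\varepsilon} \mathcal{C}\beta_\varepsilon:\beta_\varepsilon\,dx \gtrsim |P|\cdot \min_{n\in P}\sum_{i}|\tilde\mu_\varepsilon(B_i^\varepsilon(n))|^2$, and the energy cap selects a time $n^*\in P$ with $\sum_i|\tilde\mu_\varepsilon(B_i^\varepsilon(n^*))|^2 \lesssim |\log\varepsilon|$.

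To conclude I write each bad terminal Burgers vector as the sum $\tilde\mu_\varepsilon(B_j^\varepsilon(t_s^\varepsilon)) = \sum_{i : B_i^\varepsilon(n^*)\in P_j^{t_s^\varepsilon}(n^*)} \tilde\mu_\varepsilon(B_i^\varepsilon(n^*))$ and apply Cauchy--Schwarz twice, once on each bad ancestry tree and once on the sum over bad terminal balls. Using $\sum_{j\notin\tilde I_\varepsilon}\#P_j^{t_s^\varepsilon}(n^*) \leq |I_\varepsilon|\leq K|\log\varepsilon|$ together with the quadratic bound at time $n^*$ yields a first estimate of order $|\log\varepsilon|$; a subsequent pigeonhole refinement (picking $n^*$ inside a window of at least $|\log\varepsilon|^{1-2\delta}$ consecutive pure-expansion intervals, whose existence follows from $|P|\gtrsim|\log\varepsilon|$ and the at-most-$K|\log\varepsilon|^{1-\delta}$ bad balls accounting for gaps in $P$) sharpens the quadratic sum at that $n^*$ by a factor $|\log\varepsilon|^{-2\delta}$. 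Combining these two improvements absorbs the bad sum into $\delta|\tilde\mu_\varepsilon(A_\varepsilon)|$ under the case-2 lower bound and the restriction $\delta<1/5$, for $\varepsilon$ smaller than an explicit $\varepsilon_0(\alpha,\gamma,\delta,K,l,c)$.
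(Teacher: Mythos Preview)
Your overall case split and the Markov bound on the number of ``bad'' terminal balls are fine, but the energy argument that follows has a genuine gap. The assertion that the set $P$ of globally pure-expansion unit intervals has cardinality of order $|\log\varepsilon|$ is not justified: the number of merging unit intervals is bounded only by the number of starting balls, hence by $K|\log\varepsilon|$, while $t_s^\varepsilon$ is of order $(\alpha-\gamma)|\log\varepsilon|/\log c$. Since $K$, $c$, $\alpha$, $\gamma$ are independent parameters in the hypothesis, there is no reason why $(\alpha-\gamma)/\log c$ should exceed $K$, and in general $|P|$ may well be empty. The subsequent ``pigeonhole refinement'' suffers from the same confusion: you claim the gaps in $P$ are accounted for by the at most $K|\log\varepsilon|^{1-\delta}$ bad terminal balls, but a gap in $P$ is a unit interval in which \emph{some} ball merges, and that ball may very well be an ancestor of a good terminal ball. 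Finally, even on a window of consecutive pure-expansion intervals the quantity $\sum_i|\tilde\mu_\varepsilon(B_i^\varepsilon(n))|^2$ is constant in $n$, so averaging over such a window cannot sharpen it.

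The paper's proof avoids this difficulty by a different mechanism. It applies the \emph{generalized} Korn inequality (for fields with measure-valued curl) on each bad terminal ball $B_i^\varepsilon(t_s^\varepsilon)$, producing skew-symmetric matrices $W_i^\varepsilon$ with $\sum_{i\in\mathcal{B}_\varepsilon}\int_{B_i^\varepsilon(t_s^\varepsilon)}|\beta_\varepsilon-W_i^\varepsilon|^2\,dx\leq CK|\log\varepsilon|^2$. This is the crucial step: once $\beta_\varepsilon$ is replaced by $\beta_\varepsilon-W_i^\varepsilon$, one may invoke part~(2) of Lemma~\ref{lemma: boundcirc}, which carries \emph{no} Korn constant and therefore applies on arbitrarily thin annuli between consecutive merging times. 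The paper then restricts to the last $\tfrac12|\log\varepsilon|^{1-\delta}$ units of time, shows that $\#\mathcal{B}_\varepsilon(t)\leq CK|\log\varepsilon|^{\delta}$ there, and hence that at every merging time some bad ball carries mass at least $\tfrac{\delta}{2K}|\log\varepsilon|^{1-2\delta}$. Summing the resulting annular lower bounds over all inter-merging intervals yields $|\log\varepsilon|^{3-5\delta}\lesssim|\log\varepsilon|^2$, a contradiction precisely when $\delta<\tfrac15$. Without the generalized Korn step you are forced to work only on annuli of fixed ratio $c$, and then the combinatorics simply do not close; this is exactly the obstruction the paper is designed to overcome.
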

\begin{proof}
Let $\frac15 > \delta > 0$. 
Fix $c>1$. 
Let us perform a ball construction associated to $c$ starting with the balls $(B_{i}^{\varepsilon})_{i \in I_{\varepsilon}}$.
As in the previous proof, we denote the output of the construction by $(I_{\varepsilon}(t) , (B_i^{\varepsilon}(t))_{i \in I_{\varepsilon}(t)}, (R_i^{\varepsilon}(t))_{i \in I_{\varepsilon}(t)})_t$. \\
Let $t_s^{\varepsilon}$ be the first time at which one of the balls in $(B_i^{\varepsilon}(t_s^{\varepsilon}))_{i \in I_{\varepsilon}(t_s^{\varepsilon})}$ intersects $\partial A_{\varepsilon}$.
If $t_s^{\varepsilon}$ is a merging time, still denote by $(B_i^{\varepsilon}(t_s^{\varepsilon}))_{i \in I_{\varepsilon}(t_s^{\varepsilon})}$ the unmerged balls whose pairwise intersection is a set of $\mathcal{L}^2$-measure zero.
As the balls $(B_i^{\varepsilon})_{i \in I_{\varepsilon}}$ have radii not larger than $\varepsilon^{\alpha}$ and a distance of at least $l \varepsilon^{\gamma}$ to $\partial A_{\varepsilon}$, we can argue as in the previous proof to obtain that for $\varepsilon$ small enough it holds that  $t^{\varepsilon}_s \geq \lceil \frac{\alpha - \gamma}{2} \frac{|\log \varepsilon|}{\log c} \rceil \gg |\log \varepsilon|^{1-\delta}$. 
Let us define the set of balls which are affected by at most $|\log \varepsilon|^{1-\delta}$ discrete merging steps by
\begin{align*}
\mathcal{G}_{\varepsilon} =\{ &B^{\varepsilon}_i(t^{\varepsilon}_s): i \in I_{\varepsilon}(t^{\varepsilon}_s) \text{ and there exist more than } t^{\varepsilon}_s - |\log \varepsilon|^{1-\delta} \text{ natural numbers } \\ & 0 \leq n_1 < \dots < n_{L_i}\leq t^{\varepsilon}_s -1 \text{ such that for all } 1 \leq k \leq  L_i \text{ none of the balls } B^{\varepsilon}_j(n_k) \in P_i^{t^{\varepsilon}_s}(n_k) \\ & \text{ merges in the interval } (n_k,n_k+1] \}
\intertext{ and its parents at time $0 < t < t_s^{\varepsilon}$ by} 
\mathcal{G}_{\varepsilon}(t) =\{&B^{\varepsilon}_j(t): j \in I_{\varepsilon}(t) \text{ and there is } i \in I_{\varepsilon}(t^{\varepsilon}_s) \text{ such that } B^{\varepsilon}_j(t) \subseteq B^{\varepsilon}_i(t^{\varepsilon}_s) \in \mathcal{G}_{\varepsilon} \}.
\end{align*}
Analogously we denote the set of balls that are involved in mergings in many discrete steps by $\mathcal{B}_{\varepsilon} = \{B^{\varepsilon}_i(t^{\varepsilon}_s): i \in I_{\varepsilon}(t^{\varepsilon}_s)\} \setminus \mathcal{G}_{\varepsilon}$ and its parents $\mathcal{B}_{\varepsilon}(t)$ at time $t>0$. \\
In the following, we will show that if the balls in $\mathcal{G}_{\varepsilon}$ do not carry most of the mass of $\tilde{\mu}_{\varepsilon}$, then $\tilde{\mu}_{\varepsilon}(A_{\varepsilon})$ has to be small itself. 
\newline\\
\underline{\emph{Claim:}} If $\left| \sum_{B \in \mathcal{G}_{\varepsilon}} \tilde{\mu}_{\varepsilon}(B) - \tilde{\mu}_{\varepsilon}(A_{\varepsilon})  \right| \geq \delta |\tilde{\mu}_{\varepsilon}|(A_{\varepsilon})$, then $|\tilde{\mu}_{\varepsilon}(A_{\varepsilon})| \leq |\log \varepsilon|^{1 - \delta}$ for $\varepsilon$ small enough depending on $c,\delta,\gamma$ and $\alpha$. 
\newline \\
Let $\left| \sum_{B \in \mathcal{G}_{\varepsilon}} \tilde{\mu}_{\varepsilon}(B) - \tilde{\mu}_{\varepsilon}(A_{\varepsilon})  \right| \geq \delta |\tilde{\mu}_{\varepsilon}|(A_{\varepsilon})$ but let us assume that $|\tilde{\mu}(A_{\varepsilon})| > |\log \varepsilon|^{1-\delta}$. \\
First, we apply the generalized Korn inequality (see \cite[Theorem 11]{GaLePo10}) for any $B^{\varepsilon}_i(t^{\varepsilon}_s) \in \mathcal{B}_{\varepsilon}$ i.e., for any ball $B^{\varepsilon}_i(t^{\varepsilon}_s) \in \mathcal{B}_{\varepsilon}$ there exists a skew-symmetric $W^{\varepsilon}_i \in Skew(2)$ such that
\begin{equation*}
\int_{B^{\varepsilon}_i(t^{\varepsilon}_s)} | \beta_{\varepsilon} - W^{\varepsilon}_i |^2 \,dx \leq C \left( \int_{B^{\varepsilon}_i(t^{\varepsilon}_s)} |(\beta_{\varepsilon})_{sym} |^2 \,dx + (|\tilde{\mu}_{\varepsilon}|(B^{\varepsilon}_i(t^{\varepsilon}_s)))^2 \right).
\end{equation*}
Note that by scaling the constant does not depend on the size of the ball. \\
Summing over all $i \in I_{\varepsilon}(t^{\varepsilon}_s)$ such that $B^{\varepsilon}_i(t^{\varepsilon}_s) \in \mathcal{B}_{\varepsilon}$ yields (recall that by construction the pairwise intersections of balls in $(B^{\varepsilon}_i(t^{\varepsilon}_s))_{i \in I_{\varepsilon}(t^{\varepsilon}_s)}$ are of negligible Lebesgue measure)  
\begin{equation} \label{eq: boundgekorn}
\sum_{B^{\varepsilon}_i(t^{\varepsilon}_s) \in \mathcal{B}_{\varepsilon}} \int_{B^{\varepsilon}_i(t^{\varepsilon}_s)} | \beta_{\varepsilon} - W^{\varepsilon}_i |^2 \,dx \leq C \left( \int_{A_{\varepsilon}} \mathcal{C} \beta_{\varepsilon}: \beta_{\varepsilon} \,dx + \sum_{B^{\varepsilon}_i(t^{\varepsilon}_s) \in \mathcal{B}_{\varepsilon}} (|\tilde{\mu}_{\varepsilon}|(B^{\varepsilon}_i(t^{\varepsilon}_s)))^2 \right) \leq  C K |\log \varepsilon|^2.
\end{equation}
For the last inequality, we used the simple estimate (recall that $\tilde{\mu}_{\varepsilon} = \operatorname{curl} \beta_{\varepsilon}$ is concentrated in the family $(B_i^{\varepsilon})_{i \in I_{\varepsilon}}$ which consists of much smaller balls than the ones in $(B^{\varepsilon}_i(t^{\varepsilon}_s))_{i \in I_{\varepsilon}(t^{\varepsilon}_s)}$)
\begin{equation*}
\sum_{B_i(t^{\varepsilon}_s) \in \mathcal{B}_{\varepsilon}} (|\tilde{\mu}_{\varepsilon}|(B^{\varepsilon}_i(t^{\varepsilon}_s)))^2 \leq \left(\sum_{B^{\varepsilon}_i(t^{\varepsilon}_s) \in \mathcal{B}_{\varepsilon}} |\tilde{\mu}_{\varepsilon}|(B^{\varepsilon}_i(t^{\varepsilon}_s))\right)^2 \leq \left( |\tilde{\mu}_{\varepsilon}|(A_{\varepsilon}) \right)^2.
\end{equation*}
In the following, we find a lower bound for the energy concentrated on the balls of $\mathcal{B}_{\varepsilon}$. \\
First, notice that the balls in $\mathcal{B}_{\varepsilon}$ emerge from mergings which are distributed over at least $|\log \varepsilon|^{1-\delta}$ time steps of the form $(n,n+1]$ for some $n \leq t_s^{\varepsilon}-1$.
Arguing as for the sets $\mathcal{M}_{\varepsilon}(s^{\varepsilon}_1)$ in step 1 of the proof of the previous proposition, we  can obtain that for $t_s^{\varepsilon} \geq t \geq t^{\varepsilon}_s - \frac{|\log \varepsilon|^{1-\delta}}{2}$ it holds that 
\begin{equation}
\# \mathcal{B}_{\varepsilon}(t) \leq C \frac{\# I_{\varepsilon}}{|\log \varepsilon|^{1 - \delta}} \leq C K |\log \varepsilon|^{\delta}. \label{eq: estB}
\end{equation}
Let us denote by $\tau^{\varepsilon}_1 < \dots < \tau^{\varepsilon}_{L_{\varepsilon}}$ the merging times between $t^{\varepsilon}_s - \frac{|\log \varepsilon|^{1-\delta}}2$ and $t^{\varepsilon}_s$. 
Moreover, write $\tau^{\varepsilon}_0 = t^{\varepsilon}_s - \frac{|\log \varepsilon|^{1-\delta}}2$ and $\tau^{\varepsilon}_{L_{\varepsilon}+1} = t^{\varepsilon}_s$.
From estimate \eqref{eq: estB}, we derive that that for any $ 0 \leq k \leq L_{\varepsilon}$ there exists $i_k \in I_{\varepsilon}(\tau^{\varepsilon}_k)$ such that 
\begin{equation} \label{eq: boundbball}
B^{\varepsilon}_{i_k}(\tau^{\varepsilon}_k) \in \mathcal{B}_{\varepsilon}(\tau^{\varepsilon}_k) \text{ and } |\tilde{\mu}_{\varepsilon}(B^{\varepsilon}_{i_k}(\tau^{\varepsilon}_k))| \geq \frac{\delta}{2K} |\log \varepsilon|^{1- 2 \delta}.
\end{equation} 
Here, we used that $\left| \sum_{B^{\varepsilon}_{i}(\tau^{\varepsilon}_k) \in \mathcal{B}_{\varepsilon}(\tau^{\varepsilon}_k)} \tilde{\mu}_{\varepsilon}(B^{\varepsilon}_i(\tau^{\varepsilon}_k)) \right| = \left| \sum_{B \in \mathcal{G}_{\varepsilon}} \tilde{\mu}_{\varepsilon}(B) - \tilde{\mu}_{\varepsilon}(A_{\varepsilon}) \right| > \delta |\log \varepsilon|^{1-\delta}$.
By Lemma \ref{lemma: boundcirc} and \eqref{eq: boundbball}, we estimate for $j \in I_{\varepsilon}(t^{\varepsilon}_s)$ such that $B^{\varepsilon}_{i_k}(\tau^{\varepsilon}_k) \subseteq B^{\varepsilon}_j(t^{\varepsilon}_s) \in \mathcal{B}_{\varepsilon}$ the following:
\begin{align*}
\int_{ B^{\varepsilon}_{i_k}(\tau^{\varepsilon}_{k+1}) \setminus B^{\varepsilon}_{i_k}(\tau^{\varepsilon}_k) } |\beta_{\varepsilon} - W^{\varepsilon}_j|^2 \, dx &\geq \frac{1}{2\pi} (\tau^{\varepsilon}_{k+1} - \tau^{\varepsilon}_k) (\log c) \, (|\mu_{\varepsilon}(B^{\varepsilon}_{i_k}(\tau^{\varepsilon}_k))|)^2 \\
&\geq (\tau^{\varepsilon}_{k+1} - \tau^{\varepsilon}_k) (\log c) \, \frac{\delta^2}{8 \pi K^2} |\log \varepsilon|^{2-4 \delta}.
\end{align*}
Summing over all merging times between $t_s^{\varepsilon} - \frac{|\log \varepsilon|^{1-\delta}}2$ and $t_s^{\varepsilon}$ provides the estimate
\begin{equation*}
\sum_{k=0}^{L_{\varepsilon}} \int_{ B^{\varepsilon}_{i_k}(\tau^{\varepsilon}_{k+1}) \setminus B^{\varepsilon}_{i_k}(\tau^{\varepsilon}_k) } |\beta_{\varepsilon} - W^{\varepsilon}_j|^2 \, dx \geq  \frac{\delta^2}{16 \pi K^2} (\log c) \; |\log \varepsilon|^{3-5 \delta}.
\end{equation*}
Together with \eqref{eq: boundgekorn}, this implies
\begin{equation*}
 \frac{\delta^2}{16 \pi K^2} \log(c) \, |\log \varepsilon|^{3-5 \delta} \leq 2 C K |\log \varepsilon|^2,
\end{equation*}
which is a contradiction for $\delta < \frac 15$ and $\varepsilon>0$ small enough depending on $\delta$ and the occurring constants.
Hence, the claim is proven. \\
As the balls in $\mathcal{G}_{\varepsilon}$ have the claimed property \eqref{eq: pauli}, this finishes the proof.
\end{proof}
\begin{remark}
Note that both propositions in this section hold also if one replaces the elastic tensor by a nonlinear energy density with quadratic growth.
In the proof one only needs to replace Korn's inequality by its non-linear counterpart.
\end{remark}
%%%%%%%%%%%%%%%%%%%%%%%%%%%%%%%%
\subsection{Compactness}\label{section: comp}

In this section, we prove the compactness result, Theorem \ref{thm: compactness}.

\begin{proof}
\textbf{Step 1. }\emph{Compactness of the dislocation measures.} \\
Fix $1  > \gamma >0$ and define $\alpha = \gamma + \frac{1-\gamma}{2}$. Then $\gamma < \alpha < 1$.  \\
Denote by $(A_{\varepsilon_k}^j)_{j \in J_{\varepsilon_k}}$ the connected components of $\bigcup_{x \in \operatorname{supp}( \mu_k)} B_{\varepsilon_k^{\gamma}}(x)$ that do not intersect $\partial \Omega$.
Define $U_{\varepsilon_k} = \bigcup_{j \in J_{\varepsilon_k}} A_{\varepsilon_k}^j$. \\
We apply Proposition \ref{prop: lowerbound1} on $U_{\varepsilon_k}$ to $\mu_k$, $\beta_k$, and $\delta = 0$ which provides a strain $\tilde{\beta}_k: U_{\varepsilon_k} \rightarrow \mathbb{R}^{2\times2}$ satisfying
\begin{equation*}
|\operatorname{curl } \tilde{\beta}_k|(U_{\varepsilon_k}) \leq C(\gamma) |\log \varepsilon_k|.
\end{equation*}
Moreover, by \ref{lowerbound: lowerbound12ab} of Proposition \ref{prop: lowerbound1} we can extend $\tilde{\beta}_k$ by $\beta_k$ to $\Omega \setminus  U_{\varepsilon_k}$ without creating additional $\operatorname{curl}$ on $\partial U_{\varepsilon_k}$. 
In the following, we call this extended function also $\tilde{\beta}_k$. \\
Let us write $\tilde{\mu}_k =( \operatorname{curl } \tilde{\beta}_k)_{|U_{\varepsilon_k}} \in \mathcal{M}(\Omega;\R^2)$
which fulfills $\frac{|\tilde{\mu}_k|}{|\log \varepsilon_k|} \leq C(\gamma)$.
Hence, there exist a measure $\mu \in \mathcal{M}(\Omega;\mathbb{R}^2)$ and  a (not relabeled) subsequence such that $\frac{\tilde{\mu}_k}{| \log \varepsilon_k|} \stackrel{*}{\rightharpoonup} \mu$ in $\mathcal{M}(\Omega;\mathbb{R}^2)$.
As the embedding $W^{1,\infty}\hookrightarrow  C_0^0(\Omega)$ is compact, it follows that $\frac{\tilde{\mu}_k}{| \log \varepsilon_k|} \rightarrow \mu$ in the flat topology.
It remains to show that $\frac{\mu_k - \tilde{\mu}_k}{|\log \varepsilon_k|} \rightarrow 0$ in the flat topology. 
A similar argument can be found in \cite{dLGaPo12}. \\
Let $\varphi \in W^{1, \infty}_0(\Omega;\mathbb{R}^2)$ with $Lip(\varphi) \leq 1$. 
Obviously, $\operatorname{supp } (\mu_k - \tilde{\mu}_k) \subseteq \bigcup_{x \in \operatorname{supp }\mu_k} B_{\varepsilon_k^{\gamma}} (x)$ and by definition $\tilde{\mu}_k = 0$ outside $U_{\varepsilon_k}$. 
Hence,
\begin{equation}\label{eq: flatconvergence}
\int_{\Omega} \varphi \, d(\mu_k - \tilde{\mu}_k) =  \int_{\operatorname{supp } (\mu_k) \setminus U_{\varepsilon_k}} \varphi \,  d\mu_k + \sum_{j \in J_{\varepsilon_k}} \int_{A^j_{\varepsilon_k}} \varphi \,d(\mu_k - \tilde{\mu}_k).
\end{equation}
Next, notice that as the energy $F_{\varepsilon_k}(\mu_k,\beta_k)$ is uniformly bounded and the non-zero elements in $\mathbb{S}$ are bounded away from zero, the number of dislocations is bounded in terms of $|\log \varepsilon|^2$.
Hence, each connected component of $\bigcup_{x \in \operatorname{supp }\mu_k} B_{\varepsilon_k^{\gamma}} (x)$ has a diameter less than $C |\log \varepsilon_k|^2 2 \varepsilon^{\gamma}_k$ where $C$ is a universal constant. \\
We start with the first integral on the right hand side of \eqref{eq: flatconvergence}.
Note that if $x \in \operatorname{supp }\mu_k \setminus U_{\varepsilon_k}$, the corresponding connected component of $\bigcup_{x \in \operatorname{supp }\mu_k} B_{\varepsilon_k^{\gamma}} (x)$ intersects $\partial \Omega$. \\
As $Lip(\varphi) \leq 1$ and $\varphi$ vanishes on $\partial \Omega$, we obtain that $|\varphi| \leq C |\log \varepsilon|^2 2 \varepsilon^{\gamma}_k$ in $B_{\varepsilon_k^{\gamma}} (x)$ and therefore
\begin{equation}
\left| \int_{\operatorname{supp } (\mu_k) \setminus U_{\varepsilon_k}} \varphi \, d \mu_k\right| \leq 2 C |\log \varepsilon_k|^2 \varepsilon^{\gamma}_k \, |\mu_k|(\Omega) \leq \tilde{C}  |\log \varepsilon_k|^4 \varepsilon^{\gamma}_k. \label{eq: flatconv1}
\end{equation}
Next, let us consider $j \in J_{\varepsilon_k}$. 
By \ref{item: lowerbound14} of Proposition \ref{prop: lowerbound1} it holds $\mu_k(A_{\varepsilon_k}^j) = \tilde{\mu}_k(A_{\varepsilon_k}^j)$.
Therefore, 
\begin{equation*}
\int_{A_{\varepsilon_k}^j} \varphi \, d(\mu_k - \tilde{\mu}_k) = \int_{A_{\varepsilon_k}^j} \left(\varphi - <\varphi>_{A_{\varepsilon_k}^j}\right) \, d(\mu_k - \tilde{\mu_k}) ,
\end{equation*}
where $<\varphi>_{A_{\varepsilon_k}^j} = \fint_{A_{\varepsilon_k}^j} \varphi \, dx$.
As $Lip(\varphi)\leq1$, it holds $|\varphi(x) - <\varphi>_{A_{\varepsilon_k}^j}| \leq \operatorname{diam}(A_{\varepsilon_k}^j)$ for all $x \in A_{\varepsilon_k}^j$. 
Thus,
\begin{equation}\label{eq: flatconv2}
\left| \int_{A_{\varepsilon_k}^j} \varphi \, d(\mu_k - \tilde{\mu_k}) \right| \leq C |\log \varepsilon_k|^2 \varepsilon^{\gamma}_k \, (|\mu_k|(A_{\varepsilon_k}^j) + |\tilde{\mu}_k|(A_{\varepsilon_k}^j) ).
\end{equation}
Summing over all $j$ in the estimate \eqref{eq: flatconv2} and combining the resulting estimate with \eqref{eq: flatconv1} yields the existence of a constant $C(\gamma)$ such that
\begin{align*}
\left| \int_{\Omega} \varphi \, d(\mu_k - \tilde{\mu_k}) \right| &= \left| \int_{\operatorname{supp }(\mu_{k}) \setminus U_{\varepsilon_k} } \varphi \, d\mu_k  + \sum_{j \in J_{\varepsilon_k}} \int_{A_{\varepsilon_k}^j} \varphi \, d(\mu_k - \tilde{\mu_k}) \right| \\
&\leq \tilde{C}  |\log \varepsilon_k|^4 \varepsilon^{\gamma}_k + \sum_{j \in J_{\varepsilon_k}} 2 C |\log \varepsilon_k|^2 \varepsilon^{\gamma}_k (|\mu_k|(A_{\varepsilon_k}^j) + |\tilde{\mu}_k|(A_{\varepsilon_k}^j) ) \\
&\leq C(\gamma) |\log \varepsilon_k|^4 \varepsilon^{\gamma}_k.
\end{align*}
Hence, 
\begin{equation*}
\frac1 {|\log \varepsilon_k|} \sup_{\varphi \in W^{1, \infty}_0(\Omega;\mathbb{R}^2): Lip(\varphi) \leq 1} \int_{\Omega} \varphi \, d(\mu_k - \tilde{\mu}_k) \leq C(\gamma) |\log \varepsilon_k|^3 \varepsilon^{\gamma}_k \rightarrow 0 \text{ as } k \to \infty.
\end{equation*}
Thus, we established that $\frac{\mu_k}{|\log \varepsilon_k|} \rightarrow \mu$ in the flat topology which is \eqref{item: comp1}. \\
Note that for all $0 < \gamma < 1$ the corresponding measure $\tilde{\mu}_k$ satisfies $|\tilde{\mu}_k|(\Omega) \leq C(\gamma) |\log \varepsilon_k|$ for some constant $C(\gamma)$ depending on $\gamma$.
A similar argument to the one above shows that the difference of the measures $\tilde{\mu}_k$ for two different values of $\gamma$ converges to zero weakly$*$ in $\mathcal{M}(\Omega;\R^2)$.
Hence, for the chosen subsequence it holds that $\frac{\tilde{\mu}_k}{|\log \varepsilon_k|} \stackrel{*}{\rightharpoonup} \mu$ in $\mathcal{M}(\Omega;\R^2)$ for all $0 < \gamma < 1$ . 
\newline \\
\textbf{Step 2. }\emph{Finding an $L^2_{loc}$-limit for the strains.} \\
Let $0 < \gamma < 1$ and $\alpha = \gamma + \frac{1-\gamma}{2}$.
As $\Omega$ is simply connected, one can find, with the use of Sard's theorem, a monotone sequence of compactly contained subsets $(\Omega_l)_{l \in \mathbb{N}}$ of $\Omega$ such that each $\Omega_l$ is simply connected with Lipschitz boundary and $\Omega_l \nearrow \Omega$. 
In step 1, we applied Proposition \ref{prop: lowerbound1} to $U_{\varepsilon_k}$, $\beta_k$, and $\mu_k$ and obtained a modified strain $\tilde{\beta}_k$ which agrees with $\beta_k$ outside $\bigcup_{x \in \operatorname{supp}\mu_k} B_{\varepsilon_k^{\gamma}}(x)$. 
In particular, $\operatorname{curl } \tilde{\beta}_k = 0$ in $\Omega_{\varepsilon_k^{\gamma}}(\mu_k)$.
Moreover, $|\operatorname{curl } \tilde{\beta}_k|(U_{\varepsilon_k}) \leq C(\gamma) |\log \varepsilon_k|$.
As $\Omega_l \subset \subset \Omega$, it is clear that $\operatorname{dist}(\Omega_l,\partial \Omega) >0$. 
Hence, for $\varepsilon_k$ small enough we find that 
\begin{equation*} %\label{eq: curlboundUl}
\bigcup_{x \in \operatorname{supp }\mu_k } B_{\varepsilon_k^\gamma}(x) \cap \Omega_l \subseteq U_{\varepsilon_k}.
\end{equation*} 
The application of the generalized Korn's inequality (see \cite[Theorem 11]{GaLePo10}) provides a sequence of skew-symmetric matrices $W_k^l$ such that
\begin{equation} \label{eq: kornUl}
\int_{\Omega_l} |\tilde{\beta}_k - W^l_k|^2 \,dx \leq C(\Omega_l) \left(\int_{U_l} |(\tilde{\beta}_k)_{sym}|^2  \, dx + |\operatorname{curl }\tilde{\beta}_k|(\Omega_l)^2 \right).
\end{equation}
From \ref{item: lowerbound1last} of Proposition \ref{prop: lowerbound1} and the bound on $\operatorname{curl} \tilde{\beta}_k$ we derive that $\frac{\tilde{\beta}_k - W^l_k}{|\log \varepsilon_k|}$ is a bounded sequence in the space $L^2(\Omega_l;\mathbb{R}^{2 \times 2})$ where the bound depends on $\gamma$. 
In the following, we use a standard argument to show that the skew-symmetric matrices $W_k^l$ can be chosen independently from $l$. \\
Let us fix $l > 1$.
In addition, let $W_k^l$ and $W_1^l$ be the skew-symmetric matrices from above. 
We may estimate
\begin{equation*}
\mathcal{L}^2(\Omega_1) |W_k^1 - W_k^l|^2 \leq 2 \left( \int_{\Omega_1} |W_k^1 - \tilde{\beta}_k|^2 \, dx + \int_{\Omega_l} |W_k^l - \tilde{\beta}_k|^2 \, dx \right) \leq C(\gamma,l) |\log \varepsilon_k|^2. 
\end{equation*}
Thus, $|W_k^1 - W_k^l| \leq \tilde{C}(\gamma,l) |\log \varepsilon_k|$ which implies that also $\frac{\tilde{\beta}_k - W^1_k}{|\log \varepsilon_k|}$ is a bounded sequence in the space $L^2(\Omega_l;\mathbb{R}^{2 \times 2})$. 
Let us write $W_k = W^1_k$.
As $\beta_k$ agrees with $\tilde{\beta}_k$ on $\Omega_l \cap \Omega_{\varepsilon^{\gamma}_k}(\mu_k)$, we obtain that $\frac{\beta_k - W_k}{|\log \varepsilon_k|} \1_{\Omega_{\varepsilon^{\gamma}_k}(\mu_k)}$ uniformly bounded in $L^2(\Omega_l;\mathbb{R}^{2\times2})$.\\
A similar argument shows that the matrices $W_k$ can also be chosen independently from $\gamma$. \\
Next, let us consider $1 > \gamma_1 > \gamma_2 > 0$. 
Assume that $\frac{\beta_k - W_k}{|\log \varepsilon_k|} \1_{\Omega_{\varepsilon^{\gamma_1}}(\mu_k)} \rightharpoonup \beta$ in $L^2(\Omega_l;\mathbb{R}^{2\times2})$ for some fixed $l$.
Since $\1_{\Omega_{\varepsilon^{\gamma_2}}(\mu_k)} \rightarrow 1$ boundedly in measure and $\1_{\Omega_{\varepsilon^{\gamma_1}}(\mu_k)} \1_{\Omega_{\varepsilon^{\gamma_2}}(\mu_k)} = \1_{\Omega_{\varepsilon^{\gamma_2}}(\mu_k)}$, we deduce that also $\1_{\Omega_{\varepsilon^{\gamma_2}}(\mu_k)} \frac{\beta_k - W_k}{|\log \varepsilon_k|} \rightharpoonup \beta$ in $L^2(\Omega_l;\mathbb{R}^{2\times2})$. \\
On the other hand, it is clear that if for $l_1 > l_2$ we have $\frac{\beta_k - W_k}{|\log \varepsilon_k|} \1_{\Omega_{\varepsilon^{\gamma}_k}(\mu_k)} \rightharpoonup \beta$ in $L^2(\Omega_{l_1};\mathbb{R}^{2\times2})$; the same holds in $L^2(\Omega_{l_2};\mathbb{R}^{2\times2})$. \\
As weak convergence in $L^2$ is metrizable on bounded sets in $L^2$, we can  find by a diagonal argument (in $\gamma$ and $l$) a subsequence and a function $\beta \in L^2_{loc}(\Omega;\mathbb{R}^{2\times2})$ such that $\frac{\beta_k - W_k}{|\log \varepsilon_k|} \1_{\Omega_{\varepsilon^{\gamma}_k}(\mu_k)} \rightharpoonup \beta$ in $L^2(\Omega_l;\mathbb{R}^{2\times2})$ for all $1 > \gamma >0$ and $l \in \mathbb{N}$.
Since $\Omega_l \nearrow \Omega$, this proves the convergence in \eqref{item: comp2}.
In step 4 we show that $\beta \in L^2(\Omega;\mathbb{R}^{2 \times 2})$.
\newline \\
\textbf{Step 3. }\emph{$\operatorname{curl }\beta = \mu$. } \\
Fix some $1 > \gamma > 0$.
In step 1 we saw that $\frac{(\operatorname{curl } \tilde{\beta}_k)_{|U_{\varepsilon_k}}}{|\log \varepsilon_k|} \stackrel{*}{\rightharpoonup} \mu$ in $\mathcal{M}(\Omega;\mathbb{R}^2)$.
A similar argument as the one in \eqref{eq: flatconv1} shows that 
\begin{equation*}
\frac{\operatorname{curl } \tilde{\beta}_k}{|\log \varepsilon_k|} \rightarrow \mu \text{ in the flat topology on }\Omega.
\end{equation*}
This implies convergence in $\mathcal{D}\rq{}(\Omega)$.
On the other hand, we can deduce from step 2 that $\frac{\tilde{\beta}_k - W_k}{|\log \varepsilon_k|} \rightharpoonup \beta$ in $L^2_{loc}(\Omega;\mathbb{R}^{2 \times 2})$ (notice that all arguments were based on considerations for $\tilde{\beta}_k$ and $\tilde{\beta}_k$ equals $\beta_k$ on a set which converges in measure to $\Omega$).
Combining these two facts shows that for $\varphi \in C_c^{\infty}(\Omega;\mathbb{R}^2)$ it holds
\begin{align*}
&< \mu, \varphi >_{\mathcal{D}\rq{},\mathcal{D}} = \lim_k \frac 1{|\log \varepsilon_k|} < \operatorname{curl }(\tilde{\beta}_k - W_k), \varphi >_{\mathcal{D}\rq{},\mathcal{D}} 
\\ = & \lim_k \frac 1{|\log \varepsilon_k|} < \tilde{\beta}_k - W_k, J \nabla \varphi>_{\mathcal{D}\rq{},\mathcal{D}} = < \beta, J \nabla \varphi>_{\mathcal{D}\rq{},\mathcal{D}},
\end{align*}
where $J$ is the clockwise rotation by $\frac{\pi}2$.
Consequently, $\operatorname{curl }\beta= \mu$ which is \eqref{item: comp4}.
\newline \\
\textbf{Step 4. }\emph{The limit $\beta$ is in $L^2(\Omega;\R^{2\times 2})$.} \\
Let $U \subset \subset \Omega$ and $1 > \gamma > 0$ fixed.
From step 1 we know that 
\begin{equation}\label{eq: simplifiedliminf}
\int_U |(\tilde{\beta}_k)_{sym}|^2 \, dx + |(\operatorname{curl }\tilde{\beta}_k)_{|U_{\varepsilon_k}}|(\Omega)^2 \leq C(\gamma) |\log \varepsilon_k|^2.
\end{equation}
Moreover, from step 1 and step 2 we know that 
\begin{equation*}
\frac{(\operatorname{curl }\tilde{\beta}_k)_{|U_{\varepsilon_k}}}{|\log \varepsilon_k|} \stackrel{*}{\rightharpoonup} \mu \text{ in }\mathcal{M}(\Omega;\mathbb{R}^2) \text{ and } \frac{\tilde{\beta}_k - W_k}{|\log \varepsilon_k|} \rightharpoonup \beta \text{ in } L^2(U;\mathbb{R}^{2\times2}).
\end{equation*}
Hence, we derive from \eqref{eq: simplifiedliminf} that
\begin{equation*}
 \int_{U} |\beta_{sym}|^2 \,dx + |\mu|(\Omega)^2 \leq C(\gamma).
\end{equation*}
Taking the supremum over all $U \subset \subset \Omega$ gives 
\begin{equation*}
\int_{\Omega} |\beta_{sym}|^2 \,dx + |\mu|(\Omega)^2 \leq C(\gamma).
\end{equation*}
By the generalized Korn's inequality (see \cite[Theorem 11]{GaLePo10}), there exists a matrix $W \in Skew(2)$ such that $\beta - W \in L^2(\Omega;\mathbb{R}^{2 \times 2})$.
As $\Omega$ has finite measure, this implies that $\beta \in L^2(\Omega;\mathbb{R}^{2 \times 2})$.
\newline \\
\textbf{Step 5. }\emph{Weak convergence of the symmetric part of the strains.} \\
Fix $1 > \gamma > 0$.
As the matrices $W_k$ are skew-symmetric, it is clear that the symmetric part of $(\beta_k - W_k) \1_{\Omega_{\varepsilon_k ^{\gamma}}(\mu_k)}$ is $(\beta_k)_{sym}\1_{\Omega_{\varepsilon_k ^{\gamma}}(\mu_k)}$.
Since taking the symmetric part of a matrix is a linear operation, we may derive from step 2 that 
\begin{equation*}
\frac{(\beta_k)_{sym}}{|\log \varepsilon_k|}\1_{\Omega_{\varepsilon_k ^{\gamma}}(\mu_k)} \rightharpoonup \beta_{sym} \text{ in } L^2(U;\mathbb{R}^{2 \times 2}) \text{ for all } U \subset \subset \Omega.
\end{equation*}
From the bound on the energy we may deduce directly that $\frac{(\beta_k)_{sym}}{|\log \varepsilon_k|}$ is uniformly bounded in $L^2(\Omega;\mathbb{R}^{2\times2})$.
As $(1 - \1_{\Omega_{\varepsilon_k ^{\gamma}}(\mu_k)}) \rightarrow 0$ boundedly in measure, this implies that 
\begin{equation*}
\frac{(\beta_k)_{sym}}{|\log \varepsilon_k|} \rightharpoonup \beta_{sym} \text{ in } L^2(U;\mathbb{R}^{2 \times 2}) \text{ for all } U \subset \subset \Omega. 
\end{equation*}
From the uniform bound of $(\beta_k)_{sym}$ in $L^2(\Omega;\mathbb{R}^{2\times2})$ we conclude \eqref{item: comp3}.
\newline \\
\textbf{Step 6. }\emph{The $\liminf$-inequality.} \newline \\
In step 4, we have already shown a poor man's version of the $\liminf$-inequality. 
For the real $\liminf$-inequality we refer to the proof of the $\liminf$-inequality of the $\Gamma$-convergence result (Proposition \ref{thm: liminf}) in which it can be seen that the convergences established in step 1 and 2 are enough to show that for all $1 > \gamma >0$ it holds that 
\begin{equation*}
\liminf_{k \to \infty} F_{\varepsilon_k}(\mu_k,\beta_k) \geq \int_{\Omega} \mathcal{C} \beta : \beta \,dx + (1 - \gamma) \int_{\Omega} \varphi\left(\frac{d \mu}{d|\mu|} \right) \, d|\mu|.
\end{equation*}
In fact, the estimate for the self-energy works exactly as in the $\liminf$-inequality of the $\Gamma$-convergence result. 
It is computed on the set $U_{\varepsilon_k}$.
For the energy on $\Omega \setminus U_{\varepsilon_k}$ notice that, by step 5, it holds $\1_{U_{\varepsilon_k}} (\beta_k)_{sym} \rightharpoonup \beta_{sym}$ in $L^2(\Omega;\R^{2\times2})$.
Then the estimate follows by classical lower semi-continuity and the fact that $\mathcal{C}$ only acts on the symmetric part of matrices.\\
Sending $\gamma \to 0$ yields the desired $\liminf$-inequality.
\end{proof}
\begin{remark} \label{remark: countercomp}
It can be seen that it is not possible to neglect the reduction to the set $\Omega_{\varepsilon_k^{\gamma}}(\mu_k)$ in the result above.
To construct a counterexample it is enough to arrange dislocations such that $\Omega$ without the corresponding cores of radius $\varepsilon_k$ is disconnected and the total circulation around the disconnected part is $0$.
Then one can define $\beta_k$ to be a constant skew-symmetric matrix $W_k$ on that disconnected piece of the domain.
By choosing $W_k$ large enough, we can guarantee that it does not hold that $\frac{\beta_k - \tilde{W}_k}{|\log \varepsilon_k|}$ is weakly compact in $L^2$ for some sequence $\tilde{W}_k$ of skew-symmetric matrices. 
For an illustration, see figure \ref{figure: remarkcomp}.
\end{remark}

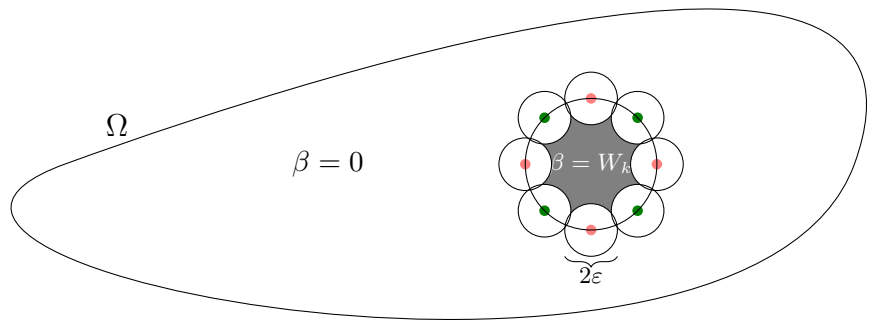
\begin{figure}[h]
\centering
\begin{tikzpicture}[scale= 3.5]
\draw (0.2,0.15) node[font = \large] {$\Omega$};
\draw   (0, 0) to [out=20, in=70]   (3,0)
         to [out=250, in=200] (0,0) ; 
\fill[gray] (2,0) circle(0.25cm);
\fill[white] (2.25,0) circle (0.1cm);
\fill[white] ({2 + 0.25/ sqrt(2)}, {0.25/ sqrt(2)}) circle(0.1cm);
\fill[white] ({2 - 0.25/ sqrt(2)}, {0.25/ sqrt(2)}) circle(0.1cm);
\fill[white] ({2 + 0.25/ sqrt(2)}, {-0.25/ sqrt(2)}) circle(0.1cm);
\fill[white] ({2 - 0.25/ sqrt(2)}, {-0.25/ sqrt(2)}) circle(0.1cm);
\fill[white] (1.75,0) circle (0.1cm);
\fill[white] (2,-0.25) circle (0.1cm);
\fill[white] (2,0.25) circle (0.1cm);
\draw (1,0) node {$\beta = 0$};
\fill[white] (2,0) node[font = \footnotesize] {$\beta =W_k$};

\fill[red!50!white] (2.25,0) circle (0.02cm);
\fill[green!50!black] ({2 + 0.25/ sqrt(2)}, {0.25/ sqrt(2)}) circle(0.02cm);
\fill[green!50!black] ({2 - 0.25/ sqrt(2)}, {0.25/ sqrt(2)}) circle(0.02cm);
\fill[green!50!black] ({2 + 0.25/ sqrt(2)}, {-0.25/ sqrt(2)}) circle(0.02cm);
\fill[green!50!black] ({2 - 0.25/ sqrt(2)}, {-0.25/ sqrt(2)}) circle(0.02cm);
\fill[red!50!white] (1.75,0) circle (0.02cm);
\fill[red!50!white] (2,-0.25) circle (0.02cm);
\fill[red!50!white] (2,0.25) circle (0.02cm);

\draw [decorate,decoration={brace,mirror,amplitude=4pt},xshift=0pt,yshift=-2pt]
(1.9,-0.28) -- (2.1,-0.28)node [black,midway,yshift=-7pt] {\footnotesize
$2\varepsilon$};

\draw (2.25,0) circle (0.1cm);
\draw ({2 + 0.25/ sqrt(2)}, {0.25/ sqrt(2)}) circle(0.1cm);
\draw ({2 - 0.25/ sqrt(2)}, {0.25/ sqrt(2)}) circle(0.1cm);
\draw ({2 + 0.25/ sqrt(2)}, {-0.25/ sqrt(2)}) circle(0.1cm);
\draw ({2 - 0.25/ sqrt(2)}, {-0.25/ sqrt(2)}) circle(0.1cm);
\draw (1.75,0) circle (0.1cm);
\draw (2,-0.25) circle (0.1cm);
\draw (2,0.25) circle (0.1cm);
\draw (2,0) circle(0.25cm);
\end{tikzpicture}
\caption{A possible construction for the configuration discussed in Remark \ref{remark: countercomp}. Here, the green and orange dots represent dislocations with opposite signs.}
\label{figure: remarkcomp}
\end{figure}

\subsection[The $\liminf$-inequality]{The $\liminf$-inequality}\label{section: liminf}

In this section, we prove the $\liminf$-inequality of the $\Gamma$-convergence result in Theorem \ref{theorem: gammanowell}. 
The key ingredients for the lower bound for the part of the energy close to the dislocations will be the Propositions \ref{prop: lowerbound1} and \ref{prop: lowerbound2}.

\begin{proposition}[The \boldmath{$\liminf$}-inequality] \label{thm: liminf}
Let $\Omega \subseteq \R^2$ an open, bounded set with Lipschitz boundary.
Let $\varepsilon_k \to 0$.
Let $(\mu,\beta), (\mu_k,\beta_k) \in \mathcal{M}(\Omega;\mathbb{R}^2) \times L^2(\Omega; \mathbb{R}^{2\times2})$ such that 
\begin{equation*}
\frac{\mu_k}{|\log \varepsilon_k|} \rightarrow \mu \text{ in the flat topology and } \frac{\beta_k}{|\log \varepsilon_k|} \rightharpoonup \beta \text{ in } L^2(\Omega;\R^{2\times 2}).
\end{equation*}
Then it holds
\begin{equation*}
\liminf_{k\to \infty} F_{\varepsilon_k}(\mu_k,\beta_k) \geq F(\mu,\beta).
\end{equation*}
\end{proposition}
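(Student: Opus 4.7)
\textbf{Setup.} Assume without loss of generality that $\liminf_k F_{\varepsilon_k}(\mu_k,\beta_k) < \infty$, pass to a subsequence achieving the liminf, and fix small auxiliary parameters $0 < \gamma < \alpha < 1$ and $0 < \delta < \frac15$ that will be sent to $0$ at the end (in the order $\delta \to 0$, then $\gamma \to 0$). Apply Proposition \ref{prop: lowerbound1} (with $\delta = 0$) on each connected component of $\bigcup_{x \in \operatorname{supp} \mu_k} B_{\varepsilon_k^\gamma}(x)$ that does not touch $\partial \Omega$, just as in the compactness proof. This produces disjoint balls $(D_i^k)_{i \in I_k}$ with $\operatorname{diam} D_i^k \leq \varepsilon_k^\alpha$, a modified strain $\bar{\beta}_k$ equal to $\beta_k$ off $\bigcup_x B_{\varepsilon_k^\alpha}(x)$, accumulated Burgers vectors $\xi_i^k = (\operatorname{curl} \bar\beta_k)(D_i^k)$, and an elastic energy that exceeds that of $\beta_k$ by at most a multiplicative $(1+C/|\log \varepsilon_k|)$ factor. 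Consequently it suffices to prove the liminf inequality for $\bar\beta_k$.

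\textbf{Elastic part.} On every $U \subset\subset \Omega$ the compactness statement yields (up to extracting skew-symmetric correctors $W_k$) $\frac{\beta_k - W_k}{|\log \varepsilon_k|} \mathbf{1}_{\Omega_{\varepsilon_k^\gamma}(\mu_k)} \rightharpoonup \beta$ in $L^2(U; \R^{2\times 2})$, hence the symmetric parts converge weakly as well. Since $F \mapsto \frac12 \mathcal{C} F : F$ is convex and depends only on $F_{sym}$, classical weak $L^2$ lower semicontinuity gives
\begin{equation*}
\liminf_{k\to\infty} \frac{1}{|\log \varepsilon_k|^2} \int_{U \setminus \bigcup_i B_{\varepsilon_k^\alpha}(x_i^k)} \tfrac12 \mathcal{C} \beta_k : \beta_k \, dx \;\geq\; \int_U \tfrac12 \mathcal{C} \beta : \beta \, dx,
\end{equation*}
and monotone exhaustion $U \nearrow \Omega$ recovers the full elastic term in $F(\mu,\beta)$. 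Crucially, this part of the integral is disjoint from the annuli used below to extract the self-energy.

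\textbf{Self-energy part.} Apply Proposition \ref{prop: lowerbound2} to $\bar\beta_k$ with the balls $(D_i^k)$, on each Voronoi-type neighbourhood $A_{\varepsilon_k}^j$ of a cluster of dislocation cores. Either the total mass of the curl on that neighbourhood is $o(|\log \varepsilon_k|)$ (in which case that neighbourhood contributes nothing to the limit self-energy, since $\varphi$ is $1$-homogeneous and the corresponding fraction of $\mu$-mass vanishes) or we obtain a sub-family $\tilde I_k$ of balls, at some late stage of a ball construction, each of which evolves from at most $|\log \varepsilon_k|^\delta$ merging times in a total of $t_s^k \sim |\log \varepsilon_k|$ time steps. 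For each such ball there are $\geq (1-o(1)) t_s^k$ intervals $(n, n+1]$ where no merging happens, and on each such annulus Lemma \ref{lemma: boundcirc} gives
\begin{equation*}
\int_{B_i^k(n+1) \setminus B_i^k(n)} \tfrac12 \mathcal{C} \bar\beta_k : \bar\beta_k \, dx \;\geq\; \tfrac{\log c}{4\pi K(c)} |\xi_i^k|^2.
\end{equation*}
Summing the disjoint annular contributions for a single ball of $\tilde I_k$ and letting $c \to \infty$ together with Proposition \ref{prop: convpsi} (with the $|\xi|^2/|\log \delta|$ error estimate) yields
\begin{equation*}
\int_{B_i^k(t_s^k) \setminus D_i^k} \tfrac12 \mathcal{C} \bar\beta_k : \bar\beta_k \, dx \;\geq\; (1-o(1))\,\psi(\xi_i^k)\, |\log \varepsilon_k|,
\end{equation*}
where the $o(1)$ is controlled uniformly in $i$ thanks to the combinatorial bound from Proposition \ref{prop: lowerbound2}. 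Summing over $i \in \tilde I_k$ and using the definition of $\varphi$ as the convex, $1$-homogeneous envelope, $\psi(\xi_i^k) \geq \varphi(\xi_i^k)$. Thus the total self-energy is bounded below by $|\log \varepsilon_k|^2 \int_\Omega \varphi(d\tilde\mu_k / d|\tilde\mu_k|) \, d(|\tilde\mu_k|/|\log\varepsilon_k|)$, where $\tilde\mu_k = \sum_{i \in \tilde I_k} \xi_i^k \delta_{x_i^k}$.

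\textbf{Passage to the limit and main obstacle.} From the compactness argument one has $\frac{\tilde\mu_k}{|\log \varepsilon_k|} \to \mu$ in the flat topology up to the discarded mass, which is $O(\delta |\mu_k|/|\log \varepsilon_k|)$ by Proposition \ref{prop: lowerbound2}. By Reshetnyak lower semicontinuity applied to the convex, $1$-homogeneous density $\varphi$ together with weak-$*$ convergence of the associated total variations, the self-energy functional is lower semicontinuous along the rescaled $\tilde\mu_k$, so sending first $k \to \infty$, then $\delta \to 0$, and finally $\gamma \to 0$ yields the full self-energy term $\int_\Omega \varphi(d\mu/d|\mu|) \, d|\mu|$. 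The main technical obstacle is to handle the case in which the accumulated Burgers vectors $\xi_i^k$ grow with $k$: then the bound from Proposition \ref{prop: convpsi} has an extra $|\xi_i^k|^2/|\log \varepsilon_k|$ error that must be absorbed. This is done by grouping such super-dislocations into the $\delta$-error in Proposition \ref{prop: lowerbound2} (so their contribution shows up as $\xi$ with $\psi(\xi) \geq \varphi(\xi)$ and is matched by the total variation of the limit measure), thereby separating the genuine "many small dislocations" regime — where the asymptotic $\psi$ is actually attained — from the "few large clusters" regime which is controlled by Proposition \ref{prop: lowerbound1} directly.
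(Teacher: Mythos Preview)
Your outline has the right architecture (split near/far, apply Propositions~\ref{prop: lowerbound1} and~\ref{prop: lowerbound2}, pass to the limit via Reshetnyak), but there are two genuine gaps.

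\textbf{Dismissing option (1) of Proposition~\ref{prop: lowerbound2} is not legitimate.} You write that if a component $A_{\varepsilon_k}^j$ has $|(\operatorname{curl}\bar\beta_k)(A_{\varepsilon_k}^j)|\leq|\log\varepsilon_k|^{1-\delta}$ then it ``contributes nothing to the limit self-energy''. This is false: nothing prevents there being of order $|\log\varepsilon_k|^{\delta}$ such components, each carrying mass $|\log\varepsilon_k|^{1-\delta}$, so collectively they account for an order-one fraction of the limit measure $\mu$ and hence a nontrivial part of $\int_\Omega\varphi\,d|\mu|$. You still need the per-component lower bound $F_{\varepsilon_k}(\mu_k,\beta_k,A_{\varepsilon_k}^j)\gtrsim|\log\varepsilon_k|^{-1}\varphi(\mu_k(A_{\varepsilon_k}^j))$ on these components. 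The paper handles this by observing that if this bound fails then the local energy is at most $C|\log\varepsilon_k|^{-\delta}$, so Proposition~\ref{prop: lowerbound1} may be \emph{re-applied} on $A_{\varepsilon_k}^j$ with the improved energy bound (i.e.\ with parameter $\delta>0$), producing at most $C|\log\varepsilon_k|^{1-\delta}$ balls there; with so few balls the number of mergings is at most $C|\log\varepsilon_k|^{1-\delta}\ll t_s^{\varepsilon_k}\sim|\log\varepsilon_k|$, and the annular argument from the other case goes through directly.

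\textbf{The sharp self-energy constant $\psi$ does not come from Lemma~\ref{lemma: boundcirc}.} Applying Lemma~\ref{lemma: boundcirc} on each unit annulus $B_i^k(n{+}1)\setminus B_i^k(n)$ yields the constant $\frac{\log c}{2\pi K(c)}$, not $\psi(\xi)$, and ``letting $c\to\infty$'' is meaningless since $c$ is fixed once the ball construction is run (and moreover $K(c)$ and $\psi$ are unrelated). The right mechanism is to bound the energy on annuli directly by $\psi_{R,r}$ (definition~\eqref{def:psirR}) for annuli whose ratio $R/r\to\infty$ with $k$, and then invoke Proposition~\ref{prop: convpsi}. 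The paper does this by partitioning $[0,s_j^{\varepsilon_k}]$ into roughly $N|\log\varepsilon_k|^{1-\delta}$ subintervals of length $\sim|\log\varepsilon_k|^{\delta}$: since balls in $\tilde I_k$ suffer at most $|\log\varepsilon_k|^{1-\delta}$ merging times, all but a fraction $2/N$ of the subintervals are pure expansion, and on each such subinterval the annulus ratio is $c^{\,\mathrm{const}\cdot|\log\varepsilon_k|^{\delta}}\to\infty$, so Proposition~\ref{prop: convpsi} gives $\psi_{R,r}(\xi)\geq(1-\eta)\log(R/r)\,\psi(\xi)$. Note that this estimate is uniform in $\xi$ by the $2$-homogeneity of $\psi$ (since $|\xi|^2\leq C\psi(\xi)$), which also disposes of what you call the ``main obstacle'': large accumulated Burgers vectors need no separate treatment.
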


\begin{proof}
Clearly, we only have to consider $(\mu_k,\beta_k)$ such that $\liminf_k F_{\varepsilon_k}(\mu_k,\beta_k) < \infty$.
Moreover, up to subsequences we may assume that the $\liminf$ is a $\lim$ and $\sup_k F_{\varepsilon_k}(\mu_k,\beta_k) \leq M < \infty$.
Then the compactness result, Theorem \ref{thm: compactness}, yields that $\operatorname{curl }\beta = \mu$. \\
Next, fix $1 > \alpha > \gamma > 0$. \\
Let us consider the set $U_{\varepsilon_k} = \bigcup_{x \in \operatorname{supp} (\mu_k)} B_{\varepsilon_k^{\gamma}}(x)$.
We split the elastic energy  into a part close to the dislocations and a part far away from the dislocations, precisely
\begin{equation*}
F_{\varepsilon_k}(\mu_k,\beta_k) = \int_{\Omega_{\varepsilon_k^{\gamma}}(\mu_{k})} \frac12 \mathcal{C} \beta : \beta \,dx + F_{\varepsilon_k}(\mu_k,\beta_k,U_{\varepsilon_k}),
\end{equation*}
where we recall that $\Omega_{\varepsilon_k^{\gamma}}(\mu_k) = \Omega \setminus \bigcup_{x \in \operatorname{supp }(\mu_k)} B_{\varepsilon_k^{\gamma}}(x)$.
\newline \\
\textbf{\emph{Lower bound far from the dislocations. }} First, notice that $\1_{\Omega_{\varepsilon_k^{\gamma}}(\mu_{k})} \rightarrow 1$ boundedly in measure. 
As $\frac{\beta_{k}}{|\log \varepsilon_k|} \rightharpoonup \beta$ in $L^2(\Omega;\mathbb{R}^{2\times2})$, this implies $\frac{\beta_{k}}{|\log \varepsilon_k|} \1_{\Omega_{\varepsilon_k^{\gamma}}(\mu_{k})} \rightharpoonup \beta$ in $L^2(\Omega;\mathbb{R}^{2 \times 2})$. 
By the classical lower semi-continuity property of functionals with convex integrands we obtain
\begin{equation*}
\liminf_{k \to \infty} \int_{\Omega_{\varepsilon_k^{\gamma}}(\mu_{k})} \mathcal{C} \beta_k : \beta_k \,dx = \liminf_{k \to \infty} \int_{\Omega} \mathcal{C} \beta_k \1_{\Omega_{\varepsilon_k^{\gamma}}(\mu_{k})} : \beta_k \1_{\Omega_{\varepsilon_k^{\gamma}}(\mu_{k})} \,dx \geq \int_{\Omega} \mathcal{C} \beta : \beta \, dx.
\end{equation*}
This establishes the lower bound of the part of the energy that is not induced by the occurrence of dislocations.
\newline \\
\textbf{\emph{Lower bound close to the dislocations. }}
In this step, we estimate the energy close to the dislocations in terms of the relaxed self-energy density $\varphi$ and the dislocation density $\mu_k$. \\ 
Denote by $(A_{\varepsilon_k}^j)_{j \in J_{\varepsilon_k}}$ the connected components of $U_{\varepsilon_k} = \bigcup_{x \in \operatorname{supp} \mu_{k}} B_{\varepsilon_k^{\gamma}}(x)$ that do not intersect $\partial \Omega$.
We apply Proposition \ref{prop: lowerbound1} to each of  the $A_{\varepsilon_k}^j$, $\beta_k$, $\alpha$, and $\delta = 0$.
For each $j \in J_{\varepsilon_k}$, we obtain a family of balls $(\tilde{B}_i^{j,\varepsilon_k})_{i \in \tilde{I}^j_{\varepsilon_k}}$ and a function $\tilde{\beta}_k: A_{\varepsilon_k}^j \rightarrow \mathbb{R}^{2\times2}$ with the properties \ref{item: lowerbound11} - \ref{item: lowerbound1last} from Proposition \ref{prop: lowerbound1}. 
In particular, the modified strains satisfy 
\begin{equation}
\frac1{|\log \varepsilon_k|^2}\int_{A_{\varepsilon_k}^j} \frac12 \mathcal{C} \tilde{\beta}_k: \tilde{\beta}_k \,dx \leq \left(1 + \frac{C(\alpha,M)}{|\log \varepsilon_k|}\right) F_{\varepsilon_k}(\mu_k,\beta_k,A_{\varepsilon_k}^j). \label{eq: modstrains}
\end{equation}
Fix $\frac15 > \delta > 0$ small enough.
One can check that for $\varepsilon_k > 0$ small enough, for each $j \in J_{\varepsilon_k}$ the sets $A_{\varepsilon_k}^j$, the modified strains $\tilde{\beta}_k$, and the balls $(\tilde{B}_i^{j,\varepsilon_k})_{i \in \tilde{I}^j_{\varepsilon_k}}$ satisfy the assumptions of Proposition \ref{prop: lowerbound2} for $l = \frac12$ and $K=2M$. 
Let us write $\nu_k = \operatorname{curl }\tilde{\beta}_k$ and recall that by \ref{item: lowerbound15} of Proposition \ref{prop: lowerbound1} it holds $\nu_k(A_{\varepsilon_k}^j) = \mu_k (A_{\varepsilon_k}^j)$. \\
The application of Proposition \ref{prop: lowerbound2} yields that for all $\varepsilon_k$ small enough for every $j \in J_{\varepsilon_k}$ we have that at least one of the options in the conclusion of Proposition \ref{prop: lowerbound2} holds.
\newline \\
\underline{\emph{Claim:}} There exists a constant $C(\alpha,M)$ with the following property: For all $\eta>0$ there exists $L \in \mathbb{N}$ such that for all $k\geq L$ it holds for all $j \in J_{\varepsilon_k}$ that 
\begin{equation} \label{eq: claimliminf}
\left(1 + \frac{C(\alpha,M)}{|\log \varepsilon_k|}\right)  F_{\varepsilon_k}(\mu_k,\beta_k,A_{\varepsilon_k}^j) \geq \frac{\alpha - \gamma - \eta - \tilde{\delta}}{|\log \varepsilon_k|} \, \varphi(\mu_k(A^j_{\varepsilon_k})),
\end{equation}
where $\tilde{\delta} = \delta \frac{\max_{\xi \in S^1} \varphi(\xi)}{\min_{\xi \in S^1} \varphi(\xi)}$. 
\newline \\
The strategy to prove this claim will depend on whether the first or second conclusion in Lemma \ref{prop: lowerbound2} holds on $A_{\varepsilon_k}^j$.  \\
Clearly, we may assume that $(\alpha - \gamma - \eta - \tilde{\delta}) > 0$.
\newline \\
\underline{\emph{Case 1: }} Conclusion (ii) of Proposition \ref{prop: lowerbound2} holds for $A_{\varepsilon_k}^j$. \\
Recall (ii) of Proposition \ref{prop: lowerbound2}:
there exists a universal $c>1$ with the following property. 
Consider a ball construction associated to $c$ starting with the balls $(\tilde{B}_i^{j,\varepsilon_k})_{i \in \tilde{I}^j_{\varepsilon_k}}$, which are the output of Proposition \ref{prop: lowerbound1} on $A^j_{\varepsilon_k}$, and the time $s_j^{\varepsilon_k}$ which is defined to be the first time such that a ball in the ball construction intersects $\partial A_{\varepsilon_k}^j$. 
We call its output $(\tilde{I}_{\varepsilon_k}^j(t),(B_i^{j,\varepsilon_k}(t))_{i \in \tilde{I}^j_{\varepsilon_k}(t)},(R_i^{j,\varepsilon_k}(t))_{i \in \tilde{I}^j_{\varepsilon_k}(t)})_t$. 
Then there exists a subset $I_{\varepsilon_k}^j \subseteq \tilde{I}^j_{\varepsilon_k}(s_j^{\varepsilon_k})$ such that for each ball $B_i^{j,\varepsilon_k}(s_j^{\varepsilon_k})$, $i \in I_{\varepsilon_k}^j$, there exist a least $(s_j^{\varepsilon_k} - |\log \varepsilon_k|^{1-\delta} - 1)$ natural numbers $0\leq n_1 < \dots < n_L \leq s_j^{\varepsilon_k}-1$ such that for all $k=1, \dots , L$ no ball in $P^{s_j^{\varepsilon_k}}_i (n_k)$ merges between $n_k$ and $n_k + 1$.
 Moreover, it holds
\begin{equation}
\left| \sum_{i \in \tilde{I}_{\varepsilon_k}^j} \nu_{k}(B_i^{j, \varepsilon_k}) - \nu_k(A_{\varepsilon_k}^j) \right| \leq \delta |\nu_k(A_{\varepsilon_k}^j)|.
\end{equation}
Notice here that $\tilde{\beta}_k$ is $\operatorname{curl }$-free outside the balls $(\tilde{B}_i^{j,\varepsilon_k})_{i \in \tilde{I}^j_{\varepsilon_k}}$. \\
Let $N \in \mathbb{N}$ and define the times $t_l^{\varepsilon_k} = l \frac{s^{\varepsilon_k}_j}{N |\log \varepsilon_k|^{1-\delta}}$ for $l=0, \dots, \lfloor N |\log \varepsilon_k|^{1-\delta} \rfloor$. 
As the starting balls $(\tilde{B}_i^{j,\varepsilon_k})_{i \in \tilde{I}^j_{\varepsilon_k}}$ of the ball construction have radii less than $\varepsilon_k^{\alpha}$ but distance of at least $\frac12 \varepsilon_k^{\gamma}$ to the boundary of $A_{\varepsilon_k}^j$, we can argue as in the proof of Proposition \ref{prop: lowerbound1} to obtain that for $\varepsilon_k$ small enough it holds that $s_j^{\varepsilon_k} \geq (\alpha - \gamma - \frac \eta 4) \frac{|\log \varepsilon_k|}{\log c}$ and consequently
\begin{equation} \label{eq: boundtjeps}
\frac{s_j^{\varepsilon_k}}{N |\log \varepsilon_k|^{1-\delta}} \geq \frac{\alpha - \gamma - \frac{\eta}4}{N} \frac{|\log \varepsilon_k|^{\delta}}{\log c}.
\end{equation}
Next, notice that by Proposition \ref{prop: convpsi} it holds for $\psi_{R,r}(\xi)$ as defined in \eqref{def:psirR} that for all $\xi \in \mathbb{R}^2$ and $R>r>0$ we have that 
\begin{equation*}
\psi_{R,r}(\xi) \geq \log \left(\frac Rr\right) \psi(\xi) - \frac{C |\xi|^2}{\log \left(\frac Rr\right)},
\end{equation*}
where $C>0$ is a universal constant.
Using the $2$-homogeneity and continuity (by convexity) of $\psi$, this implies together with \eqref{eq: boundtjeps} that for $\varepsilon_k$ small enough we may derive for all $\xi \in \mathbb{R}^2$ that 
\begin{equation}\label{eq: estimatepsi}
\psi_{c^{t^{\varepsilon_k}_{l+1}},c^{t^{\varepsilon_k}_l}}(\xi) \geq \left(1-\frac{\eta}4 \right) (t^{\varepsilon_k}_{l+1}-t^{\varepsilon_k}_l) (\log c) \, \psi(\xi).
\end{equation}
Moreover, by the properties of $I_{\varepsilon_k}^j$, it holds true that if $\varepsilon_k$ is small enough, for every $i \in I_{\varepsilon_k}^j$ there exists a subset $J_i^{j,\varepsilon_k} \subseteq \{1, \dots, \lfloor N |\log \varepsilon|^{1 -\delta} \rfloor \}$ such that $\# J_i^{j,\varepsilon_k} \geq (N-2) |\log \varepsilon|^{1-\delta}$ and for each $l \in J_i^{j,\varepsilon_k}$ none of the balls in $P_i^{s_j^{\varepsilon_k}}(t^{\varepsilon_k}_l)$ merges between $t_l^{\varepsilon_k}$ and $t^{\varepsilon_k}_{l+1}$.
For a visualization, see Figure \ref{fig: goodbadmergings}.
Hence, we may estimate for $i \in I_{\varepsilon_k}^j$ (note that the occurring annuli are pairwise disjoint by construction)
\begin{align}
\int_{B_i^{j, \varepsilon_k}(s^{\varepsilon_k}_j)} \frac12 \mathcal{C} \tilde{\beta}_k : \tilde{\beta}_k \,dx & \geq \sum_{l \in J_i^{j,\varepsilon_k}} \sum_{B^{j,\varepsilon_k}_m(t^{\varepsilon_k}_l) \in P_i^{s^{\varepsilon_k}_j}(t^{\varepsilon_k}_l)} \int_{B_m^{j,\varepsilon_k}(t^{\varepsilon_k}_{l+1}) \setminus B^{j,\varepsilon_k}_m(t^{\varepsilon_k}_l)} \frac12 \mathcal{C} \tilde{\beta}_k : \tilde{\beta}_k \, dx \label{eq: deltafirst}\\
&\geq \sum_{l \in J_i^{j,\varepsilon_k}} \sum_{B^{j,\varepsilon_k}_m(t^{\varepsilon_k}_l) \in P_i^{s^{\varepsilon_k}_j}(t^{\varepsilon_k}_l)} \psi_{R^{\varepsilon_k}_m(t^{\varepsilon_k}_{l+1}),R^{\varepsilon_k}_m(t^{\varepsilon_k}_l)} (\nu_k(B^{j,\varepsilon_k}_m(t^{\varepsilon_k}_l))) \nonumber \\
&\geq \sum_{l \in J_i^{j,\varepsilon_k}} \sum_{B^{j,\varepsilon_k}_m(t^{\varepsilon_k}_l) \in P_i^{s^{\varepsilon_k}_j}(t^{\varepsilon_k}_l)} \left(1 - \frac{\eta}4\right) (t^{\varepsilon_k}_{l+1} - t^{\varepsilon_k}_l) (\log c) \, \psi(\nu_k(B^{j,\varepsilon_k}_m(t^{\varepsilon_k}_l))). \nonumber
\intertext{For the last inequality, we used \eqref{eq: estimatepsi} and that by construction it holds $\frac{R^{\varepsilon_k}_m(t^{\varepsilon_k}_{l+1})}{R^{\varepsilon_k}_m(t^{\varepsilon_k}_{l})} = c^{t^{\varepsilon_k}_{l+1} - t^{\varepsilon_k}_{l}}$ . Next, note that $\sum_{B^{j,\varepsilon_k}_m(t^{\varepsilon_k}_l) \in P_i^{s^{\varepsilon_k}_j}(t^{\varepsilon_k}_l)} \nu_k(B^{j,\varepsilon_k}_m(t^{\varepsilon_k}_l)) = \nu_k(B_i^{j,\varepsilon_k}(s_j^{\varepsilon_k}))$. Hence, by the definition of $\varphi$ and the fact that $\nu_k(B^{j,\varepsilon_k}_m(t^{\varepsilon_k}_l)) \in \mathbb{S}$ we can further estimate }
&\geq \left(1 - \frac{\eta}4\right)  \sum_{l \in J_i^{j,\varepsilon_k}} \frac{s^{\varepsilon_k}_j}{N |\log \varepsilon_k|^{1-\delta}} (\log c) \, \varphi(\nu_k(B^{j,\varepsilon_k}_i(s^{\varepsilon_k}_j))) \nonumber \\
&\geq \left(1 - \frac{\eta}4\right)  (N-2) |\log \varepsilon_k|^{1-\delta} \frac{s^{\varepsilon_k}_j}{N |\log \varepsilon_k|^{1-\delta}} \, \varphi(\nu_k(B^{j,\varepsilon_k}_i(s^{\varepsilon_k}_j))) \nonumber \\
&\geq \left(1 - \frac{\eta}4 \right) \frac{N-2}N \left(\alpha - \gamma - \frac \eta 4\right) |\log \varepsilon_k| \, \varphi(\nu_k(B^{j,\varepsilon_k}_i(s^{\varepsilon_k}_j))). \nonumber
\intertext{The simple estimate $\left(1 - \frac{\eta}4\right) x \geq x - \frac{\eta}4$ for $0<x<1$ yields }
&\geq \frac{N-2}N \left(\alpha - \gamma - \frac{\eta}2\right) |\log \varepsilon_k| \, \varphi(\nu_k(B^{j,\varepsilon_k}_i(s^{\varepsilon_k}_j))). \label{eq: deltalast}
\end{align}
\begin{figure}
\centering
\begin{tikzpicture}
\draw (-2.5,0) node[anchor=east] {$B_i^{j,\varepsilon_k}(s_j^{\varepsilon_k})$};
\fill[green!50!gray] (0.5,0) circle (3.0375cm);
\draw (0.5,0) circle (3.0375cm);

\fill[green!50!gray] (0.5,0) circle (2.025cm);
\draw (0.5,0) circle (2.025cm);

\fill[red!40!white] (0.5,0) circle (1.35cm);
\draw (0.5,0) circle (1.35cm);
\draw[dashed] (0.5,0) circle (1cm);

\fill[green!50!gray] (1,0) circle (0.45cm);
\draw (1,0) circle (0.45cm);
\fill[green!50!gray] (0,0) circle (0.45cm);
\draw (0,0) circle (0.45cm);

\fill[gray!70!white] (1,0) circle (0.3cm);
\draw (1,0) circle (0.3cm);
\fill[gray!70!white] (0,0) circle (0.3cm);
\draw (0,0) circle (0.3cm);
\end{tikzpicture}
\caption{Sketch of the situation in case 1 in the proof of Proposition \ref{thm: liminf} for a ball $B_i^{j,\varepsilon_k}(s_j^{\varepsilon_k})$ such that $i \in I^j_{\varepsilon_k}$. The starting balls included in $B_i^{j,\varepsilon_k}(s_j^{\varepsilon_k})$ are drawn in gray. Every dark line marks the state of the ball construction for a certain time $t_l^{\varepsilon_k}$. If there is no merging for a ball included in $B_i^{j,\varepsilon_k}(s_j^{\varepsilon_k})$ between $t_l^{\varepsilon_k}$ and $t_{l+1}^{\varepsilon_k}$ the corresponding annuli are drawn in green, otherwise in red. The dashed line indicates a merging.}
\label{fig: goodbadmergings}
\end{figure}
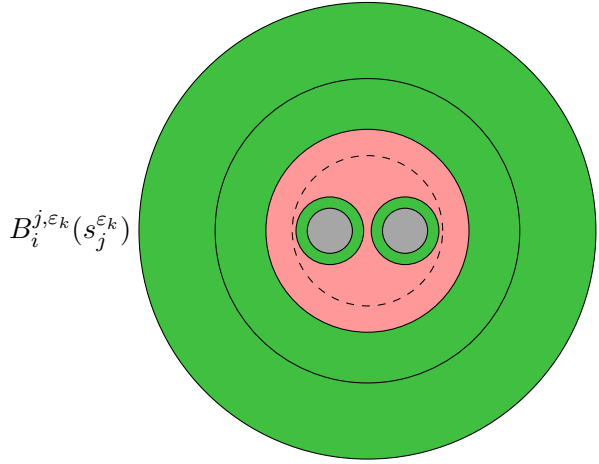
Finally, we choose $N$ so large that $\frac{N-2}N (\alpha - \gamma - \frac{\eta}2) \geq  (\alpha - \gamma - \eta)$. \\
As the family $(B^{j,\varepsilon_k}_i(s^{\varepsilon_k}_j))_{i \in I_{\varepsilon_k}^j}$ consists of pairwise disjoint balls, we can sum over the estimate in \eqref{eq: deltafirst} - \eqref{eq: deltalast} to find that
\begin{align}
&\int_{A_{\varepsilon_k}^j} \frac12 \mathcal{C} \tilde{\beta}_k : \tilde{\beta}_k \, dx \label{eq: estimatephifirst}\\
\geq &(\alpha - \gamma - \eta) |\log \varepsilon_k| \sum_{i \in I_{\varepsilon_k}^j} \varphi(\nu_k(B^{j,\varepsilon_k}_i(s^{\varepsilon_k}_j))) \nonumber  \\
\geq &(\alpha - \gamma - \eta) |\log \varepsilon_k| \, \varphi \left(\sum_{i \in I_{\varepsilon_k}^j} \nu_k(B^{j,\varepsilon_k}_i(s^{\varepsilon_k}_j))\right) \nonumber \\
\geq &(\alpha - \gamma - \eta) |\log \varepsilon_k| \, \left( \varphi(\nu_k(A_{\varepsilon_k}^j)) - \varphi\left(\sum_{i \in I_{\varepsilon_k}^j} \nu_k(B^{j,\varepsilon_k}_i(s_j^{\varepsilon_k})) - \nu_k(A_{\varepsilon_k}^j) \right) \right). \label{eq: estimatephilast}
\end{align}
Here, we used the subadditivity of $\varphi$ for the last and last but second inequality.
Now, note that by the $1$-homogeneity of $\varphi$ and the properties of $I_{\varepsilon_k}^j$ we may derive that 
\begin{align}
\varphi\left(\sum_{i \in I_{\varepsilon_k}^j} \nu_k(B^{j,\varepsilon_k}_i(s^{\varepsilon_k}_j)) - \nu_k(A_{\varepsilon_k}^j) \right) &\leq \left(\max_{\xi \in S^1} \varphi(\xi) \right)  \bigg| \sum_{i \in I_{\varepsilon_k}^j} \nu_k(B^{j,\varepsilon_k}_i(s^{\varepsilon_k}_j)) - \nu_k(A_{\varepsilon_k}^j) \bigg| \label{eq: estimatephi2first} \\
&\leq \left(\max_{\xi \in S^1} \varphi(\xi) \right)  \delta \, |\nu_k(A_{\varepsilon_k}^j)| \nonumber \\
&\leq \delta \, \frac{\max_{\xi \in S^1} \varphi(\xi)}{\min_{\xi \in S^1} \varphi(\xi)} \, \varphi(\nu_k(A_{\varepsilon_k}^j)). \label{eq: estimatephi2last}
\end{align}
Combining \eqref{eq: estimatephifirst} - \eqref{eq: estimatephilast}, \eqref{eq: estimatephi2first} - \eqref{eq: estimatephi2last}, and\eqref{eq: modstrains}, using the inequality $(\alpha-\gamma-\eta)(1-\tilde{\delta}) \geq \alpha-\gamma-\eta-\tilde{\delta}$, and recalling that $\nu_k(A^j_{\varepsilon_k}) = \mu_k(A_{\varepsilon_k}^j)$ proves the claim in the first case. \\ \\
\noindent \underline{\emph{Case 2 :}} $|\mu_k(A_{\varepsilon_k}^j)| = |\nu_k(A_{\varepsilon_k}^j)| \leq |\log \varepsilon|^{1-\delta}$. \\
We only need to consider those $A_{\varepsilon_k}^j$ such that $F_{\varepsilon_k}(\mu_k,\beta_k,A_{\varepsilon_k} j) < \frac{\alpha - \gamma - \eta - \tilde{\delta}}{|\log \varepsilon_k|} \, \varphi(\mu_k(A_{\varepsilon_k}^j))$ (otherwise the desired lower bound is immediate).
The $1$-homogeneity and continuity of $\varphi$ yields that 
\begin{equation*}
F_{\varepsilon_k}(\mu_k,\beta_k,A_{\varepsilon_k}^j) \leq C \, \frac{\alpha - \gamma - \eta - \tilde{\delta}}{|\log \varepsilon_k|} \, |\mu_k(A_{\varepsilon_k}^j)| \leq C (\alpha - \gamma - \eta - \tilde{\delta}) \, |\log \varepsilon_k|^{-\delta} 
\end{equation*}
Hence, we can apply Proposition \ref{prop: lowerbound1} to $\mu_k,\beta_k$, $A_{\varepsilon_k}^j$,  $\alpha,\gamma,\delta$ as fixed before and $K = C$ where $C$ is the universal constant from the estimate above.
Then, we obtain a function $\bar{\beta}_k : A_{\varepsilon_k}^j \rightarrow \mathbb{R}^{2\times2}$ and a family of balls $(D_i^{j,\varepsilon_k})_{i \in I_{\varepsilon_k}^j}$ satisfying the conclusions of Proposition \ref{prop: lowerbound1}.
In particular, the radius of the balls in $(D_i^{j,\varepsilon_k})_{i \in I_{\varepsilon_k}^j}$ is less than $\varepsilon^{\alpha}$,
\begin{equation*}
\# I_{\varepsilon_k}^j \leq K(\alpha) |\log \varepsilon_k|^{1-\delta}, \text{ and } \operatorname{curl } \bar{\beta}_k = 0 \text{ on }A_{\varepsilon_k}^j \setminus \bigcup_{i \in I_{\varepsilon_k}^j} D_i^{j,\varepsilon_k}.
\end{equation*}
Moreover, the strains $\bar{\beta}_k$ satisfy
\begin{equation*}
\frac1{|\log \varepsilon_k|^2} \int_{A_{\varepsilon_k}^j} \frac12 \mathcal{C} \bar{\beta}_k : \bar{\beta}_k \,dx \leq \left(1 + \frac{C(\alpha)}{|\log \varepsilon_k|} \right) F_{\varepsilon_k}(\mu_k,\beta_k,A_{\varepsilon_k}^j).
\end{equation*}
Let us consider a ball construction associated to some $c>1$ not depending on $\varepsilon_k$ or $j$ starting with the balls $(D_i^{j,\varepsilon_k})_{i \in I_{\varepsilon_k}^j}$ as long as for the constructed balls it holds that $B^{j,\varepsilon_k}_i(t) \cap \partial A_{\varepsilon_k}^j = \emptyset$.
As the number of starting balls is bounded by $K(\alpha) |\log \varepsilon_k|^{1-\delta}$, obviously the number of occurring merging times during the ball construction is also bounded by $K(\alpha) |\log \varepsilon_k|^{1-\delta}$.
Hence, we can argue as in case 1 until \eqref{eq: deltafirst} - \eqref{eq: deltalast} to prove the claim also in this case (in this case we do not need the additional $\tilde{\delta}$ on the right hand side of the desired estimate).
\newline  \\
Armed with the statement of  the claim we can now prove the lower bound close to the dislocations. \\
Let $x_{\varepsilon_k}^j \in A_{\varepsilon_k}^j$ and define the measure $\tilde{\mu}_k = \frac{1}{|\log \varepsilon_k|}\sum_{j \in J_{\varepsilon_k}} \mu_k(A_{\varepsilon_k}^j) \delta_{x_{\varepsilon_k}^j}$.
By the statement of the claim and the $1$-homogeneity of $\varphi$, it is clear that $\tilde{\mu}_k$ is a bounded sequence of measures.
Analogously to the proof of the compactness theorem in Section \ref{section: comp} one can show that $\tilde{\mu}_k\stackrel{*}{\rightharpoonup} \mu$ in $\mathcal{M}(\Omega;\mathbb{R}^2)$.
Writing the estimate \eqref{eq: claimliminf} of the claim in terms of $\tilde{\mu}_k$ leads to
\begin{align*}
&\left(1 + \frac{C(\alpha,M)}{|\log \varepsilon_k|}\right) \sum_{j \in J_{\varepsilon_k}} F_{\varepsilon_k}(\mu_k,\beta_k,A_{\varepsilon_k}^j) \geq (\alpha - \gamma - \eta - \tilde{\delta}) \int_{\Omega} \varphi\left( \frac{d \tilde{\mu}_k}{d|\tilde{\mu}_k|} \right) \, d|\tilde{\mu}_k|,
\intertext{which implies that }
&\left(1 + \frac{C(\alpha,M)}{|\log \varepsilon_k|}\right) F_{\varepsilon_k}(\mu_k,\beta_k,U_{\varepsilon_k}) \geq (\alpha - \gamma - \eta - \tilde{\delta}) \int_{\Omega} \varphi\left( \frac{d \tilde{\mu_k}}{d|\tilde{\mu}_k|} \right) \, d|\tilde{\mu}_k|.
\intertext{Then it follows from Reshetnyak's theorem that }
&\liminf_{k \to \infty} F_{\varepsilon_k}(\mu_k,\beta_k,U_{\varepsilon_k}) \geq (\alpha - \gamma - \eta - \tilde{\delta}) \int_{\Omega} \varphi\left( \frac{d \mu}{d|\mu|} \right) \, d|\mu|.
\intertext{Letting $\alpha \to 1, \eta \to 0$, and $\delta \to 0$ yields }
&\liminf_{k \to \infty} F_{\varepsilon_k}(\mu_k,\beta_k,U_{\varepsilon_k}) \geq (1 - \gamma) \int_{\Omega} \varphi\left( \frac{d \mu}{d|\mu|} \right) \, d|\mu|.
\intertext{Combining the bounds far and close to the dislocations we find }
&\liminf_{k \to \infty}  F_{\varepsilon_k}(\mu_k,\beta_k) \geq \int_{\Omega} \frac12 \mathcal{C} \beta: \beta \, dx + (1 - \gamma) \int_{\Omega} \varphi\left( \frac{d \mu}{d|\mu|} \right) \, d|\mu|. 
\end{align*}
Finally, $\gamma \to 0$ finishes the proof of the lower bound.
\end{proof}
%%%%%%%%%%%%%%%%%%%%%%%%%%%%%%%%%%%%%%%
\subsection[The $\limsup$-inequality]{The $\limsup$-inequality}\label{section: limsup}

In this section, we prove the $\limsup$-inequality of the $\Gamma$-convergence result in Theorem \ref{theorem: gammanowell}. 
The proof was mainly worked out in \cite[Theorem 12]{GaLePo10} in the case of well-separated dilocations.
The difference to our setting is that the approximating energies in our case $F_{\varepsilon_k}$ carry the extra term $\frac{|\mu|(\Omega)}{|\log \varepsilon|^2}$.

\begin{proposition}[The \boldmath{$\limsup$}-inequality]
Let $\varepsilon_k \to 0$ and $(\mu,\beta) \in \mathcal{M}(\Omega;\mathbb{R}^2) \times L^2(\Omega;\mathbb{R}^{2 \times 2})$.
There exists $(\mu_k,\beta_k)_k \subseteq \mathcal{M}(\Omega;\mathbb{R}^2) \times L^2(\Omega;\mathbb{R}^{2 \times 2})$ such that 
\begin{enumerate}
\item $\frac{\mu_k}{|\log \varepsilon_k|} \rightarrow \mu \text{ in the flat topology and } \frac{\beta_k}{|\log \varepsilon_k|} \rightharpoonup \beta$ in $L^2(\Omega;\R^{2\times2})$, 
\item $\limsup_{k \to \infty} F_{\varepsilon_k}(\mu_k,\beta_k) \leq F(\mu,\beta) $.
\end{enumerate}
\end{proposition}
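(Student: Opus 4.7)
The strategy is to import the recovery-sequence construction from \cite[Theorem 12]{GaLePo10} and to check that the additional core penalization term $|\mu_k|(\Omega)/|\log\varepsilon_k|^2$, absent in that work, does not affect the limsup bound. First I reduce to a well-behaved target: assuming $F(\mu,\beta)<\infty$, so $\mu\in\mathcal{M}(\Omega;\R^2)\cap H^{-1}(\Omega;\R^2)$ with $\operatorname{curl }\beta=\mu$. By the definition \eqref{definition: phi} of $\varphi$ as a relaxation of $\psi$, a standard density argument and a diagonal procedure, it suffices to produce a recovery sequence for $\mu=\sum_{j=1}^{M}\xi_j f_j\,dx$, with $\xi_j\in\mathbb{S}$ and $f_j\in C_c^\infty(\Omega;[0,\infty))$ of mutually disjoint supports, chosen so that $\int_\Omega\varphi(d\mu/d|\mu|)\,d|\mu|=\sum_j\psi(\xi_j)\int_\Omega f_j\,dx$; we also pick a smooth companion field $\beta\in C^\infty(\Omega;\R^{2\times 2})$ with $\operatorname{curl }\beta=\mu$.

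For the construction, fix an intermediate scale $r_k\to 0$ with $r_k/\varepsilon_k\to\infty$ and $\log(r_k/\varepsilon_k)/|\log\varepsilon_k|\to 1$, e.g.\ $r_k=|\log\varepsilon_k|^{-3}$. Partition $\Omega$ into axis-aligned cubes $Q_k^i$ of side $r_k$, and in each $Q_k^i\subseteq\operatorname{supp}(f_j)$ place $N_k^{j,i}=\lfloor|\log\varepsilon_k|\int_{Q_k^i}f_j\,dx\rfloor$ dislocations of Burgers vector $\xi_j$ at points $\{x_\alpha^{j,i,k}\}_\alpha$ arranged on a regular sub-grid of spacing $\delta_k\sim|\log\varepsilon_k|^{-1/2}\gg\varepsilon_k$. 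Setting $\mu_k=\sum_{j,i,\alpha}\xi_j\delta_{x_\alpha^{j,i,k}}$, one has $\mu_k/|\log\varepsilon_k|\stackrel{*}{\rightharpoonup}\mu$ in $\mathcal{M}(\Omega;\R^2)$, which upgrades to flat convergence via the compact embedding $W^{1,\infty}_0\hookrightarrow C_0^0$. Following \cite{GaLePo10}, define $\beta_k=|\log\varepsilon_k|\beta+\gamma_k\in\mathcal{A}\mathcal{S}_{\varepsilon_k}(\mu_k)$, where on each annulus $B_{\delta_k/2}(x_\alpha^{j,i,k})\setminus B_{\varepsilon_k}(x_\alpha^{j,i,k})$ the corrector $\gamma_k$ coincides with the radial optimizer of \eqref{definition: psi} with parameters $(\xi_j,2\varepsilon_k/\delta_k)$, and on the complement $\gamma_k$ is obtained via a curl-free Hodge correction so that $\operatorname{curl }\gamma_k=\mu_k-|\log\varepsilon_k|\mu$, $\beta_k\equiv 0$ on the cores, and $\beta_k$ satisfies the required circulation condition.

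Since the radial optimizers have zero tangential mean on each circle, a direct computation yields $\gamma_k/|\log\varepsilon_k|\rightharpoonup 0$ in $L^2(\Omega;\R^{2\times 2})$, hence $\beta_k/|\log\varepsilon_k|\rightharpoonup\beta$. Expanding
\[
\frac12\int_\Omega\mathcal{C}\beta_k:\beta_k\,dx=\frac12\int_\Omega\mathcal{C}\gamma_k:\gamma_k\,dx+|\log\varepsilon_k|\int_\Omega\mathcal{C}\gamma_k:\beta\,dx+\frac{|\log\varepsilon_k|^2}{2}\int_\Omega\mathcal{C}\beta:\beta\,dx,
\]
the cross term is $o(|\log\varepsilon_k|^2)$ by weak convergence, and Proposition \ref{prop: convpsi} together with the choice of scales gives
\[
\frac12\int_\Omega\mathcal{C}\gamma_k:\gamma_k\,dx=(1+o(1))\sum_{j,i,\alpha}\psi(\xi_j)|\log\varepsilon_k|=(1+o(1))|\log\varepsilon_k|^2\sum_j\psi(\xi_j)\int_\Omega f_j\,dx.
\]
Dividing by $|\log\varepsilon_k|^2$ recovers $\int_\Omega\tfrac12\mathcal{C}\beta:\beta\,dx+\int_\Omega\varphi(d\mu/d|\mu|)\,d|\mu|$. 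Finally, $|\mu_k|(\Omega)\leq C|\log\varepsilon_k|$ by construction, so the extra core term $|\mu_k|(\Omega)/|\log\varepsilon_k|^2\to 0$, which is the only adjustment specific to the present formulation.

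The delicate point, entirely analogous to \cite{GaLePo10}, is the Hodge-type glueing of $\gamma_k$ on the buffer annulus $B_{r_k/2}(x_\alpha^{j,i,k})\setminus B_{\delta_k/2}(x_\alpha^{j,i,k})$ around each dislocation, matching the circulation and the vanishing conditions of $\mathcal{A}\mathcal{S}_{\varepsilon_k}(\mu_k)$ while generating only $o(|\log\varepsilon_k|^2)$ of extra elastic energy. The only genuinely new feature in the critical regime is the dense packing of dislocations on the sub-grid of spacing $\delta_k\sim|\log\varepsilon_k|^{-1/2}$; since $\delta_k\gg\varepsilon_k$, the cores remain pairwise disjoint at the scale $\varepsilon_k$ and the singular kernels stay energetically additive, so the construction transfers verbatim and the additional core penalization vanishes trivially.
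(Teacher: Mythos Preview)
Your overall strategy is exactly the paper's: import the recovery sequence from \cite[Theorem 12]{GaLePo10} for the well-separated setting, observe that weak$^*$ convergence of $\mu_k/|\log\varepsilon_k|$ implies flat convergence, and note that the only new feature here---the core penalization $|\mu_k|(\Omega)/|\log\varepsilon_k|^2$---vanishes because the construction yields $|\mu_k|(\Omega)\leq C|\log\varepsilon_k|$. The paper's proof is in fact even terser than yours: it simply invokes \cite{GaLePo10} as a black box and records these two observations, without reproducing any of the grid construction.

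Where you go further than the paper, by spelling out the scales, your numbers are internally inconsistent. With cubes of side $r_k=|\log\varepsilon_k|^{-3}$ one has $\int_{Q_k^i}f_j\,dx\sim r_k^2=|\log\varepsilon_k|^{-6}$, so $N_k^{j,i}=\lfloor|\log\varepsilon_k|\int_{Q_k^i}f_j\rfloor\sim|\log\varepsilon_k|^{-5}$, which rounds to zero; and the proposed sub-grid spacing $\delta_k\sim|\log\varepsilon_k|^{-1/2}$ is \emph{larger} than the cube side $r_k$, so no sub-grid fits. In the actual \cite{GaLePo10} construction the dislocation spacing is of order $|\log\varepsilon_k|^{-1/2}$ (so that the total number is $\sim|\log\varepsilon_k|$), and the intermediate radius around each dislocation is a separate scale, not the same as the partition mesh. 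These are bookkeeping slips rather than conceptual gaps---the essential mechanism (radial optimizer on annuli plus Hodge correction, energy splitting via Proposition~\ref{prop: convpsi}) is right---but as written the explicit construction does not produce any dislocations. Either fix the scales or, as the paper does, simply cite \cite{GaLePo10} and record the two additional observations.
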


\begin{proof}
Let $(\mu_k,\beta_k)_k$ be the recovery sequence from the well-separated case, see \cite[Proof of Theorem 12]{GaLePo10} i.e.,
\begin{enumerate}
\item[(a)] $\frac{\mu_k}{|\log \varepsilon_k|} \stackrel{*}{\rightharpoonup} \mu$ in $\mathcal{M}(\Omega;\R^2)$,
\item[(b)] $\frac{\beta_k}{|\log \varepsilon_k|} \rightharpoonup \beta$ in $L^2(\Omega;\R^{2\times2})$,
\item[(c)] $\limsup_{k \to \infty}\frac1{|\log \varepsilon_k|^2} \int_{\Omega_{\varepsilon_k}(\mu_k)} \frac12 \mathcal{C} \beta_k : \beta_k \, dx \leq F(\mu,\beta).$
\end{enumerate}
It follows directly that $\frac{\mu_k}{|\log \varepsilon_k|} \rightarrow \mu$ in the flat norm. 
Moreover, it follows immediately that
\begin{equation*}
\frac1{|\log \varepsilon_k|^2} |\mu_k|(\Omega) \rightarrow 0.
\end{equation*}
Hence, we find
\begin{equation*}
\limsup_{k \to \infty} F_{\varepsilon_k}(\mu_k,\beta_k) \leq F(\mu,\beta).
\end{equation*}
\end{proof}

\section*{Acknowledgement}

The author is very grateful to Stefan M\"uller and Sergio Conti for bringing the problem to his attention and many fruitful discussions.

\bibliographystyle{abbrv}
\bibliography{no_wellseparateness-arxive}

% For one-column wide figures use

%\begin{acknowledgements}
%If you'd like to thank anyone, place your comments here
%and remove the percent signs.
%\end{acknowledgements}

% BibTeX users please use one of
%\bibliographystyle{spbasic}      % basic style, author-year citations
%\bibliographystyle{spmpsci}      % mathematics and physical sciences
%\bibliographystyle{spphys}       % APS-like style for physics
%\bibliography{}   % name your BibTeX data base

\end{document}